\newcommand\reallywidehat[1]{
\savestack{\tmpbox}{\stretchto{\scaleto{\scalerel*[\widthof{\ensuremath{#1}}]{\kern-.6pt\bigwedge\kern-.6pt}{\rule[-\textheight/2]{1ex}{\textheight}}}{\textheight}}{0.5ex}}\stackon[1pt]{#1}{\tmpbox}
}
\newtheorem{theorem}{Theorem}[section]
\newtheorem{lemma}[theorem]{Lemma}
\newtheorem{corollary}[theorem]{Corollary}
\newtheorem{proposition}[theorem]{Proposition}
\theoremstyle{definition}
\newtheorem{remark} [theorem] {Remark}
\newcommand{\la}{\lambda}
\newcommand{\norm}[1]{\left\lVert#1\right\rVert}
\newcommand{\pd}[2]{\frac{\partial#1}{\partial#2}}
\newcommand\restr[2]{\ensuremath{\left.#1\right|_{#2}}}
\newcommand{\R}{\mathbb{R}}
\newcommand{\e}{\varepsilon}
\newcommand{\Tr}{\mathop{\rm{Tr}}}
\newcommand{\dive}{\mathop{\rm{div}}}
\newcommand{\df}[4]{\ensuremath\sideset{_{#1}}{_{#4}}{\mathop{\left\langle #2, #3 \right\rangle}}}
\begin{document}

\title[Strong unique continuation from the boundary]
{Strong unique continuation from the boundary\\ for the spectral fractional Laplacian}

\author{Alessandra De Luca, Veronica Felli, and Giovanni Siclari}
\address{Alessandra De Luca 
\newline \indent Dipartimento di Scienze Molecolari e Nanosistemi,
Università Ca' Foscari Venezia,
\newline \indent  Via Torino 155, 30172 Venezia Mestre, Italy.}
\email{alessandra.deluca@unive.it}

\address{Veronica Felli and Giovanni Siclari 
\newline \indent Dipartimento di Matematica e Applicazioni, Università
degli Studi di Milano-Bicocca,
\newline \indent Via Cozzi 55, 20125 Milano, Italy.}
\email{veronica.felli@unimib.it,  g.siclari2@campus.unimib.it}

\date{January 27, 2023}
\thanks{The authors are partially supported by the INDAM-GNAMPA
    2022 grant ``Questioni di esistenza e unicit\`a per problemi non
    locali con potenziali''.
Part of this work was carried out while A. De Luca and V. Felli were
participating in the research program ``Geometric Aspects of Nonlinear
Partial Differential Equations'' at Institut Mittag-Leffler in Djursholm, Sweden, in 2022.}

\begin{abstract}
  We investigate unique continuation properties and asymptotic
    behaviour at boundary points for solutions to a class of elliptic
    equations involving the spectral fractional Laplacian.  An
    extension procedure leads us to study a degenerate or singular
    equation on a cylinder, with a homogeneous Dirichlet boundary
    condition on the lateral surface and a non homogeneous Neumann
    condition on the basis. For the extended problem, by an
    Almgren-type monotonicity formula and a blow-up analysis, we
    classify the local asymptotic profiles at the edge where the
    transition between boundary conditions occurs. Passing to
    traces, an analogous blow-up result and its consequent strong
    unique continuation property is deduced for the nonlocal
    fractional equation.
\end{abstract}

\maketitle

\noindent{\bf Keywords.}
 Spectral fractional Laplacian; boundary behaviour of solutions; unique continuation;  monotonicity
 formula.

\medskip \noindent
{\bf MSC classification.}
35R11, 
35B40, 
31B25. 

\section{Introduction and statement of the main results}

In this paper we prove the strong unique continuation property and
derive local asymptotics from the boundary for the solutions to the
following equation
\begin{equation}\label{eq-spec-lapla}
	(-\Delta)^{s}u= h u  \quad \text{ on } \Omega,
\end{equation}
where $s \in (0,1)$, $\Omega \subseteq \R^N$ is a bounded Lipschitz domain with
$N >2s$, $h$ is a measurable function on $\Omega$ satisfying suitable
summability properties which will be more specifically clarified below
(see \eqref{hypo-h}) and $(-\Delta)^s$ is the so-called
\emph{spectral} fractional Laplacian.
 
The unique continuation property has been abundantly studied over the
years for several problems.  We recall that a
family of functions, including the zero function, satisfies the
\emph{strong unique continuation property} if the null function is
the only one to have a zero of infinite order.

Several results are available in the literature about the spectral
fractional Laplacian and its interpretations. See \cite{AD},
\cite{LAPGGM}, and references therein for a detailed overview.
We mention that regularity properties for stationary equations
are discussed in \cite{G}, while existence and uniqueness results for
evolution equations governed by the spectral fractional Laplacian are
established in \cite{BY}.  More closely related to the present paper
are the results in \cite{YU}, where a strong unique continuation
principle at nodal points is proved for fractional powers
  of some divergence-type elliptic operators, including the case of
the spectral
fractional Laplacian. The techniques used in \cite{YU} are
inspired by those
introduced in \cite{Fall-Felli-2014}, which are based
on a combination of a monotonicity formula for an Almgren-type
frequency function and a blow up analysis. This local approach is made
possible by the extension result \cite[Theorem 2.5]{CS} due to
Caffarelli and Stinga, see also \cite{stinga-torrea}.

The development of a monotonicity formula for the extended
  problem presents new difficulties when dealing with boundary
  points. Indeed, since the point $x_0$ from which the unique
  continuation is sought after lies on $\partial \Omega$, the
geometry of $\partial \Omega$ can interfere with the monotonicity
argument.  In the present paper, we face this difficulty by
  straightening the boundary with a local diffeomorphism that
  transfers the information about the geometry of $\partial\Omega$
  into a coefficient matrix in the operator, which turns out to be a
  perturbation of the identity if the boundary is regular enough, see
Section \ref{sec-straightening the boundary}. Secondly, a Pohozaev
type identity is needed to differentiate the frequency function
  and to develop the monotonicity argument. To this aim, we
  rely on a more general result contained in \cite[Proposition
2.3]{fellisiclari}, which is based on a Sobolev-type regularity
  theory for a class of degenerate and singular problems.
Furthermore, a blow-up analysis provides a detailed description of
  the asymptotic behaviour of solutions to \eqref{eq-spec-lapla} at
  $x_0$, giving a complete classification of the order of
  homogeneity of asymptotic profiles, see Theorem
  \ref{theor-blow-up-down} below.  For this purpose, an important role
    is played by an eigenvalue problem on a half-sphere
  under a symmetry condition, see \eqref{prob-eigenvalues}.

  The extension problem corresponding to \eqref{eq-spec-lapla}
    consists of a degenerate or singular equation on the cylinder
    $\Omega\times(0,+\infty)$, with a homogeneous Dirichlet boundary
    condition on the lateral surface $\partial\Omega\times(0,+\infty)$
    and a Neumann derivative on the basis $\Omega\times\{0\}$ being
    equal to the right hand side of \eqref{eq-spec-lapla}, see
    \eqref{prob-extension}. Therefore, the formulation of the problem
    in terms of the extension leads us to study what happens near a
    point of the edge at which a transition between boundary
    conditions of a different type takes place. We observe that this
    situation is quite different from the one that occurs in
    \cite{de2021strong}, where unique continuation from boundary
    points is studied for the \emph{restricted fractional Laplacian};
    indeed the extension problem corresponding to the case treated in
    \cite{de2021strong} is a degenerate or singular problem with mixed
    conditions that vary on a flat basis rather than on an edge. In
    fact, the analysis carried out in the present paper highlights
    different asymptotic behaviors at the boundary for the two
    operators, unlike what happens at internal points, where the
    locally equivalent form of the extended problems induces the same
    blow-up profiles.

  In order to introduce a suitable functional setting and give a weak
  formulation of \eqref{eq-spec-lapla}, we recall the definition of
  the spectral fractional Laplacian, which can be given in terms
    of the Dirichlet eigenvalues of the Laplacian, see e.g.
  \cite{MR2825595}, \cite{LAPGGM} and \cite{AD}.  From classical
  spectral theory, the Dirichlet eigenvalue problem
\begin{equation*}
\begin{cases} -\Delta \varphi =\mu \varphi ,  &\text{in } \Omega, \\
 \varphi =0, \quad &\text{on } \partial \Omega,
\end{cases}	
\end{equation*}
admits an increasing and diverging sequence
$\{\mu_k\}_{k \in \mathbb{N}\setminus\{0\}}$ of positive
eigenvalues (repeated according to their
  multiplicity). Furthermore, there exists an orthonormal basis of
$L^2(\Omega)$ made of the corresponding eigenfunctions
$\{\varphi_k\}_{k \in \mathbb{N}\setminus\{0\}}$. Every
$v \in L^2(\Omega)$ can be expanded with respect to the basis
$\{\varphi_k\}_{k \in \mathbb{N}\setminus\{0\}}$ as 
\begin{equation*}
  v=\sum_{k=1}^\infty (v,\varphi_k)_{L^2(\Omega)}
  \varphi_k\quad\text{in $L^2(\Omega)$},
\end{equation*}
where $(v,\varphi_k)_{L^2(\Omega)}$ is the $L^2$-scalar product,
i.e. $(v_1,v_2)_{L^2(\Omega)}=\int_{\Omega} v_1 v_2 \, dx$.

We introduce  the functional space 
\begin{equation*}
  \mathbb{H}^s(\Omega):=\left\{v \in L^2(\Omega):
    \sum_{k=1}^\infty\mu^s_k
    ( v, \varphi_k)_{L^2(\Omega)}^2
    <+\infty\right\}
\end{equation*}
which is  a Hilbert space with
respect to the scalar product
\begin{equation}\label{fract-lapla-domain-scalar-product}
 (v_1,v_2)_{\mathbb{H}^s(\Omega)}:=
  \sum_{k=0}^\infty\mu^s_k
( v_1, \varphi_k)_{L^2(\Omega)}
( v_2, \varphi_k)_{L^2(\Omega)}
 ,	\quad v_1,v_2 \in \mathbb{H}^s(\Omega).
\end{equation}
A more explicit characterization of the space
$\mathbb{H}^s(\Omega)$ is provided by  the interpolation theory, see
\cite[Section 3.1.3]{BY} and \cite{MR0350177}:
\begin{equation*}
\mathbb{H}^s(\Omega)=[H^1_0(\Omega),L^2(\Omega)]_{1-s}=
\begin{cases}
H^s_0(\Omega), &\text{if } s \in(0,1)\setminus\{\frac{1}{2}\}, \\
H_{00}^{1/2}(\Omega), &\text{if } s =\frac{1}{2}.
\end{cases}	
\end{equation*}
Here, denoting as $H^s(\Omega)$ the usual fractional Sobolev space
$W^{s,2}(\Omega)$, $H_0^s(\Omega)$ is the closure of
$C^\infty_c(\Omega)$ in $H^s(\Omega)$, and
\begin{equation*}
  H_{00}^{1/2}(\Omega):=\left\{u \in H_0^{\frac12}(\Omega):
    \int_{\Omega}\frac{u^2(x)}{d(x,\partial \Omega)}\, dx <+\infty\right\},	
\end{equation*}
where $d(x,\partial \Omega):=\inf\{|x-y|:y \in \partial \Omega\}$. We
recall that $H^s(\Omega)=H^s_0(\Omega)$ if $s \in (0,\frac{1}{2}]$,
see \cite{MR0350177}. Moreover, if $s\neq\frac12$, the trivial
  extension by $0$ outside $\Omega$ defines a linear and continuous
  operator from $H^s_0(\Omega)$ into $H^s(\R^N)$, see \cite[Remark 2.5
  and Proposition B.1]{BLP}. On the other hand, the trivial extension
  defines a linear and continuous operator from $H_{00}^{1/2}(\Omega)$
  into $H^{1/2}(\R^N)$, as one can easily deduce from estimate (B.2)
  in \cite{BLP}. Then
  \begin{align}\label{eq:triv-ext}
    \iota: \mathbb{H}^s(\Omega)&\to H^s(\R^N),\\
   \notag v&\mapsto \tilde v=
       \begin{cases}
         v,&\text{in }\Omega,\\
         0,&\text{in }\R^N\setminus\Omega,
       \end{cases}
  \end{align}
 is a linear and continuous operator.

It is easy to verify that, if $v\in \mathbb{H}^s(\Omega)$, then
  the series
  $\sum_{k=1}^\infty\mu^s_k (v,\varphi_k)_{L^2(\Omega)} \varphi_k$
  converges in the dual space $(\mathbb{H}^s(\Omega))^*$ to some
  $F\in (\mathbb{H}^s(\Omega))^*$ such that
  $\df{(\mathbb{H}^s(\Omega))^*}{F}{\varphi_k}{\mathbb{H}^s(\Omega)}=\mu^s_k
  (v,\varphi_k)_{L^2(\Omega)}$. Hence, for every
  $v\in \mathbb{H}^s(\Omega)$, we can define its spectral fractional
  Laplacian as
\begin{equation}\label{fract-lapla}
  (-\Delta)^s v = \sum_{k=1}^\infty\mu^s_k
  (v,\varphi_k)_{L^2(\Omega)}
  \varphi_k \in (\mathbb{H}^s(\Omega))^*.
\end{equation}
Actually, the spectral fractional Laplacian is the Riesz isomorphism
between $\mathbb{H}^s(\Omega)$ endowed with the scalar product
  \eqref{fract-lapla-domain-scalar-product} and its dual
$(\mathbb{H}^s(\Omega))^*$, i.e.
\begin{equation}\label{frac-lapla-ritz}
  \df{(\mathbb{H}^s(\Omega))^*}
  {(-\Delta)^s v_1}{v_2}{\mathbb{H}^s(\Omega)}=
  (v_1,v_2)_{\mathbb{H}^s(\Omega)}
  \quad \text{ for all } v_1,v_2 \in\mathbb{H}^s(\Omega).
\end{equation}
The spectral fractional Laplacian defined in \eqref{fract-lapla} is a
different operator from the usual fractional Laplacian defined by the
Fourier transformation as
\begin{equation}\label{eq-frac-laplacain}
\mathcal{F}{( (-\Delta)^s v)}(\xi):=|\xi|^{2s}\widehat{v}(\xi)
\end{equation} 
for any $v \in \mathcal{S}(\R^N)$.  Indeed, the spectral fractional
Laplacian depends on the domain $\Omega$ and it is a global operator
in $\Omega$, while the fractional Laplacian is a global operator on
the whole $\R^N$. Moreover, the eigenfunctions of the spectral
fractional Laplacian coincide with the eigenfunctions of the
  Dirichlet Laplacian, hence they are smooth up to the boundary if
  $\Omega$ is sufficiently regular; on the other hand, the
eigenfunctions of the restricted fractional Laplacian,
defined by restricting the operator in \eqref{eq-frac-laplacain}
  to act only on functions vanishing outside $\Omega$, are only
H\"older continuous, see \cite{SV}.

Within the functional setting introduced above, we can give the
  notion of weak solution to \eqref{eq-spec-lapla}.  To this
purpose, we assume
that
\begin{equation}\label{hypo-h}
h \in W^{1,\frac{N}{2s}+\e}(\Omega)
\end{equation}
for some $\e\in(0,1)$. We note that it is not restrictive to assume
  $\varepsilon$ small.  In view of \eqref{frac-lapla-ritz}, we say
that a function $u \in \mathbb{H}^s(\Omega)$ is a weak solution to
\eqref{eq-spec-lapla} if
\begin{equation}\label{eq-spec-lapla-weak}
  (u,\phi )_{\mathbb{H}^s(\Omega)}=
  \int_{\Omega} h(x)u(x) \phi(x) \, dx
  \quad \text{ for any } \phi \in C^\infty_c(\Omega).
\end{equation}
The right hand side in \eqref{eq-spec-lapla-weak} is finite in
  view of \eqref{hypo-h}, the H\"older's inequality, and the
following fractional Sobolev inequality
\begin{equation*}
  \norm{v}_{L^{2^*_s}(\Omega)}\le \mathcal{K}_{N,s}
  \norm{v}_{H^s(\Omega)}
  \quad \text{ for any } v \in H_0^s(\Omega),	
\end{equation*}
where 
\begin{equation}\label{2ast}
2^*_s:=\frac{2N}{N-2s},
\end{equation}
and $ \mathcal{K}_{N,s}>0$ is a positive constant depending only on
$N$ and $s$, see e.g. \cite[Theorem 6.5]{DNPV} and \cite[Remark 2.5 and
  Proposition B.1]{BLP}.

In order to establish a unique continuation property at a fixed point
$x_0 \in \partial \Omega$, we need to assume some regularity on the
boundary of $\Omega$ near $x_0$; more precisely, we assume that there
exist a radius $R>0$ and a function $g$ such that
\begin{equation}\label{hypo-g}
	g\in C^{1,1}(\R^{N-1},\R)
\end{equation}
and, up to rigid motions, letting $x=(x',x_N) \in \R^{N-1} \times \R$,
\begin{align} 
  &\partial \Omega \cap B'_R(x_0) = \{(x',x_N)
    \in B'_R(x_0):x_N=g(x')\}, \label{hypo-g-boundary-Omega}\\
  &\Omega	 \cap B'_R(x_0) = \{(x',x_N)
    \in B'_R(x_0): x_N<g(x')\}, \label{hypo-g-Omega}
\end{align}
where, for any $r >0$ and $x \in \R^N$,
\begin{equation}\label{Brx}
B'_r(x):=\{y \in \R^N:|y-x|<r\}.
\end{equation}
The spectral fractional Laplacian defined in \eqref{fract-lapla} turns
out to be a nonlocal operator on $\Omega$. As we intend to use an
  approach based on local doubling inequalities, which are deduced
from an Almgren-type monotonicity formula in the spirit of
\cite{MR833393}, it is quite natural to deal with the local
  realization of the spectral fractional Laplacian. This is obtained by
the extension procedure described in
\cite{CS} (see also \cite{stinga-torrea} and \cite{MR2825595}) which
transforms \eqref{eq-spec-lapla} into a singular or degenerate problem
on a cylinder contained in a $N+1$-dimensional space.

More precisely, we consider the half-space 
$\R^{N+1}_+:=\R^N \times(0,\infty)$,
whose total variable is denoted as
$z=(x,t) \in \R^{N} \times [0,\infty)$.  For any open set
$E \subseteq \R^{N} \times (0,\infty)$, let $H^1(E,t^{1-2s})$ be the
completion of $C^\infty_c(\overline E)$ with respect to the norm
\begin{equation*}
  \norm{\phi}_{H^1(E,t^{1-2s})}:=
  \left(\int_{E} t^{1-2s}(\phi^2 +|\nabla \phi|^2) \, dz\right)^\frac12.
\end{equation*}
By \cite[Theorem 11.11, Theorem 11.2, 11.12 Remarks(iii)]{MR802206}
and the extension theorems for weighted Sobolev spaces with weights in
the Muckenhoupt's $A_2$ class proved in \cite{MR1245837}, for any open
Lipschitz set $E\subseteq \R^{N} \times (0,\infty)$,
the space $H^1(E,t^{1-2s})$ can be characterized as
\begin{equation*}
  H^1(E,t^{1-2s})=  \left\{v \in W^{1,1}_{\rm loc}(E):
    \int_{E} t^{1-2s} (v^2+|\nabla v|^2)\, dz< +\infty\right\}.
\end{equation*}
We define
\begin{equation}\label{C+}
  \mathcal{C}_{\Omega}:= \Omega \times (0,+\infty), \quad
\partial_L\mathcal{C}_{\Omega}:= \partial \Omega  \times
[0,+\infty),
\end{equation}
and 
\begin{equation*}
  H_{0,L}^1(\mathcal{C}_{\Omega},t^{1-2s})
  :=\overline{\{\phi \in
    C^{\infty}_c(\overline{\mathcal{C}_{\Omega}}):\phi
    =0 \text{ on } 
    \partial_L\mathcal{C}_{\Omega}\}}^{\norm{\cdot}_{H^1(\mathcal{C}_{\Omega},t^{1-2s})}}, 
\end{equation*}
i.e. $H_{0,L}^1(\mathcal{C}_{\Omega},t^{1-2s})$ is the closure in
$H^1(\mathcal{C}_{\Omega},t^{1-2s})$ of
$\{\phi \in C^{\infty}_c(\overline{\mathcal{C}_{\Omega}}):\phi=0
\text{ on } \partial_L\mathcal{C}_{\Omega}\}$.
Furthermore there exists a linear and continuous trace operator 
\begin{equation}\label{trace-Omega}
  \mathop{\rm{Tr}_\Omega}:
  H_{0,L}^1(\mathcal\mathcal{C}_{\Omega},t^{1-2s})
  \to  \mathbb{H}^s(\Omega)
\end{equation}
which is also onto (see \cite[Proposition 2.1]{MR2825595}).
Moreover,
in \cite{MR2825595} it is observed that, for every
$v \in~\!\mathbb{H}^s(\Omega)$, the minimization problem
\begin{equation*}
  \min_{\substack{w\in H_{0,L}^1(\mathcal{C}_{\Omega},t^{1-2s})\\
      \mathop{\rm{Tr}_\Omega}(w)=v}}\left\{
    \int_{\mathcal{C}_{\Omega}}t^{1-2s} |\nabla w(x, t)|^2   \, dx\,dt\right\}
\end{equation*}
has a unique minimizer
$\mathcal H(v)=V \in H_{0,L}^1(\mathcal{C}_{\Omega},t^{1-2s})$ which solves
\begin{equation}\label{prob-extension-general}
\begin{cases}
\dive(t^{1-2s} \nabla V)=0, &\text{in } \mathcal{C}_{\Omega}, \\
\mathop{\rm{Tr}_\Omega}(V)=v,  & \text{on } \Omega\times \{0\}, \\
V=0,  & \text{on } \partial\Omega\times[0,+\infty), \\
-\lim_{t \to 0^+} t^{1-2s} \pd{V}{t}= \kappa_{s,N}(-\Delta )^sv,
& \text{on } \Omega\times \{0\},
\end{cases}
\end{equation}
where $\kappa_{s,N} >0$ is a positive constant depending only on $N$
and $s$. Equation \eqref{prob-extension-general} has to be interpreted
in a weak sense, that is
\begin{equation}\label{eq-weak-extension}
  \int_{\mathcal{C}_{\Omega}}t^{1-2s} \nabla V \cdot \nabla \phi \, dz
  =\kappa_{s,N}
  (v,\mathop{\rm{Tr}_\Omega}(\phi))_{\mathbb{H}^s(\Omega)}
  \quad \text{for all } \phi
  \in  H_{0,L}^1(\mathcal{C}_{\Omega},t^{1-2s}),
\end{equation}
in view of \eqref{frac-lapla-ritz}.  Hence, \label{ext} if
$u \in \mathbb{H}^s(\Omega)$ solves \eqref{eq-spec-lapla}, then its
extension
$\mathcal H(u)=U \in H_{0,L}^1(\mathcal{C}_\Omega,t^{1-2s})$
weakly solves
\begin{equation}\label{prob-extension}
\begin{cases}
  \dive(t^{1-2s} \nabla U)=0, &\text{in } \mathcal{C}_\Omega, \\
  \mathop{\rm{Tr}_\Omega}(U)=u,  & \text{on } \Omega\times \{0\}, \\
U=0,  & \text{on } \partial\Omega\times[0,+\infty), \\
  -\lim_{t \to 0^+} t^{1-2s} \pd{U}{t}= \kappa_{s,N}hu, & \text{on }
  \Omega\times \{0\},
\end{cases}	
\end{equation}
according to \eqref{prob-extension-general},
namely
\begin{equation}\label{eq-weak}
  \int_{\mathcal{C}_\Omega}t^{1-2s} \nabla U \cdot \nabla \phi \, dz
  =\kappa_{s,N}\int_\Omega hu \mathop{\rm{Tr}_\Omega}(\phi) \, dx
  \quad
  \text{for all }   \phi \in  H_{0,L}^1(\mathcal{C}_\Omega,t^{1-2s}).
\end{equation}
The asymptotic behavior at $x_0 \in \partial \Omega$ of any solution
$U$ of \eqref{prob-extension}, and consequently of any solution $u$ of
\eqref{eq-spec-lapla}, turns out to be related to the eigenvalues of
the following problem
\begin{equation}\label{prob-eigenvalues}
\begin{cases}
  -\mathop{\rm{div}_{\mathbb{S}}}(\theta_{N+1}^{1-2s}\nabla_{\mathbb{S}}Y)=
  \mu \,\theta_{N+1}^{1-2s} \,Y, &\text{on } \mathbb{S}^+\\[5pt]
  \lim_{\theta_{N+1} \to 0^+}\theta_{N+1}^{1-2s}\,\nabla_{\mathbb{S}}
  Y\cdot\nu=0,
  &\text{on } \mathbb{S}',\\[5pt]
  Y\in H_{\rm odd}^1(\mathbb{S}^+,\theta_{N+1}^{1-2s}),
\end{cases}	
\end{equation}
where 
\begin{align*}
&\mathbb{S}:=\{\theta=(\theta', \theta_N,\theta_{N+1}) \in \R^{N+1}:|\theta'|^2+\theta_N^2+\theta_{N+1}^2=1\},\\
&\mathbb{S}^+:=\{\theta=(\theta', \theta_N,\theta_{N+1}) \in \mathbb{S}: \theta_{N+1}>0\},\\
&\mathbb{S}':=\partial \mathbb{S}^+=\{\theta=(\theta',\theta_N, \theta_{N+1}) \in \mathbb{S}: \theta_{N+1}=0\},
\end{align*}
and 
$\nu$ is the outer normal vector to $\mathbb{S}^+$ on $\mathbb{S}'$, that is $\nu=-(0,\dots, 0,1)$.
We consider  the weighted space 
\begin{equation*}
  L^2(\mathbb{S}^+,\theta_{N+1}^{1-2s}):=\left\{\Psi:\mathbb{S}^+ \to
    \R \text{ measurable}: \int_{\mathbb{S}^+}\theta_{N+1}^{1-2s}
    \Psi^2 \, dS <+\infty\right\},
\end{equation*}
where $dS$ denotes the volume element on $N$-dimensional spheres.
In order to introduce the space
$H_{\rm odd}^1(\mathbb{S}^+,\theta_{N+1}^{1-2s})$ in which problem
\eqref{prob-eigenvalues} is formulated, we first denote by
$H^1(\mathbb{S}^+,\theta_{N+1}^{1-2s})$ the completion of
$C^{\infty}(\overline{\mathbb{S}^+})$ with respect to the norm
\begin{equation*}
  \norm{\phi}_{H^1(\mathbb{S}^+,\theta_{N+1}^{1-2s})}:
  =\left(\int_{\mathbb{S}^+}\theta_{N+1}^{1-2s}( \phi^2
    +|\nabla_{\mathbb{S}} \phi|^2 )\, dS\right)^{1/2}.
\end{equation*}
Then we define
\begin{equation}\label{H1d}
  H_{\rm odd}^1(\mathbb{S}^+,\theta_{N+1}^{1-2s}):=\\
  \{\Psi \in
  H^1(\mathbb{S}^+,\theta_{N+1}^{1-2s}):\Psi(\theta',\theta_N,\theta_{N+1})
  =-\Psi(\theta',-\theta_N,\theta_{N+1})\}.
\end{equation}
It is easy to verify that $H_{\rm odd}^1(\mathbb{S}^+,\theta_{N+1}^{1-2s})$
is a closed subspace of $ H^1(\mathbb{S}^+,\theta_{N+1}^{1-2s})$.

A function $Y \in H_{\rm odd}^1(\mathbb{S}^+,\theta_{N+1}^{1-2s})$ is
an eigenfunction of \eqref{prob-eigenvalues} if $Y\not\equiv0$ and
\begin{equation}\label{eq-egienvlulues}
  \int_{\mathbb{S}^+} \theta_{N+1}^{1-2s}\, \nabla_{\mathbb{S}}Y \cdot
  \nabla_{\mathbb{S}} \Psi \, dS
  = \mu \int_{\mathbb{S}^+} \theta_{N+1}^{1-2s}  Y \Psi \, dS
\end{equation}
for all $\Psi \in H_{\rm odd}^1(\mathbb{S}^+,\theta_{N+1}^{1-2s})$.

By classical spectral theory, the set of the eigenvalues of problem
\eqref{prob-eigenvalues} is an increasing and diverging sequence of
positive real numbers $\{\mu_m\}_{m
  \in\mathbb{N}\setminus\{0\}}$. In Appendix
\ref{appendix-eigenvalues-half-sphere} we explicitly determine the
sequence $\{\mu_m\}_{m \in\mathbb{N}\setminus\{0\}}$, obtaining that,
for all $m \in \mathbb{N}\setminus\{0\}$,
\begin{equation}\label{eigenvalues}
\mu_m=
\begin{cases}
m^2+m(N-2s), &\text{if } N>1, \\
(2m-1)^2+(2m-1)(N-2s), &\text{if } N=1.
\end{cases}
\end{equation} 
Let, for future reference,  
\begin{align}
  &V_m \text{ be the eigenspace of problem \eqref{prob-eigenvalues}
    associated to the eigenvalue } \mu_m,\label{Eigenspaces}\\
  &M_m \text{ be the dimension of } V_m,
    \label{dimension-egigenspaces}	\\
  &\{Y_{m,k}:m \in \mathbb{N}\setminus \{0\} \text{ and }
    k \in \{1, \dots,M_m\}\} \text{ be an orthonormal basis of }
    L^2(\mathbb{S}^+,\theta_{N+1}^{1-2s}) \label{orthonormal-base} \\
  &\text{ such that } \{Y_{m,k}: k=1, \dots, M_m\}
    \text{ is a basis  of } V_m. \notag
\end{align}

\begin{remark}\label{rem-Y-not-0}
  Let $Y$ be an eigenfunction of \eqref{prob-eigenvalues} associated
  to the eigenvalue $m^2+m(N-2s)$. Then $Y$ can not vanish identically
  on $\mathbb{S'}$.

  Indeed, if $Y\equiv0$ on $\mathbb{S'}$, we
    would have that $V(r\theta):=r^{m}Y(\theta)$ would solve
  $\dive(t^{1-2s}\nabla V)=0$ on $\R_+^{N+1}$, satisfying both
  Neumann and Dirichlet boundary condition on $\R^N \times \{0\}$.
  This would contradict the unique continuation
  principle for elliptic equations with weights in the  Muckenhoupt
  $A_2$ class, see \cite{MR833393}, \cite{MR2370633}, and
  \cite[Proposition 2.2]{MR3268922}. 
\end{remark}

The main result of the present paper is a complete classification
  of asymptotic blow-up profiles at a point $x_0 \in \partial \Omega$
  for solutions of \eqref{prob-extension-general} and, in turn, for
the corresponding solutions of \eqref{eq-spec-lapla}.

\begin{theorem}\label{theor-blow-up-down}
  Let $N>2s$ and $\Omega\subset \R^N$ be a bounded Lipschitz domain. Let
  $x_0 \in \partial \Omega$ and assume that there exist $R>0$ and a
  function $g$ satisfying \eqref{hypo-g},
  \eqref{hypo-g-boundary-Omega}, and \eqref{hypo-g-Omega}.  Let $u$ be
  a non trivial solution of \eqref{eq-spec-lapla} in the sense of
  \eqref{eq-spec-lapla-weak}, with $h$ satisfying \eqref{hypo-h}. Then
  there exists $m_0 \in \mathbb{N} \setminus \{0\}$ (which is odd
    in the case $N=1$) and an eigenfunction $Y$ of
  \eqref{prob-eigenvalues} associated to the eigenvalue
  $m_0^2 +m_0(N-2s)$, such that
\begin{equation*}
  \la^{-m_0} u(\la x+x_0) \to |x|^{m_0}\widehat Y\left(\frac{x}{|x|},0\right)
  \quad \text{as }\la \to 0^+
  \quad \text{in } H^s(B_1'),
\end{equation*}
where $B'_1:=B_1'(0)$ has been defined in \eqref{Brx}, $u$ is
  trivially extended to zero outside $\Omega$ as in 
  \eqref{eq:triv-ext}, and
\begin{equation}\label{eq:asy-u}
  \widehat Y(\theta',\theta_N,\theta_{N+1})=
  \begin{cases}
     Y(\theta',\theta_N,\theta_{N+1}),&\text{if }\theta_N<0,\\
     0,&\text{if }\theta_N\geq0.
  \end{cases}
\end{equation}
\end{theorem}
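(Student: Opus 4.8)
The strategy is the classical Almgren--Garofalo--Lin blow-up scheme adapted to the edge geometry of the extended cylinder. First I would pass to the extension: by the discussion around \eqref{prob-extension}, the trivially extended solution $U=\mathcal H(u)$ solves \eqref{eq-weak} on $\mathcal C_\Omega$, with Dirichlet condition on $\partial\Omega\times[0,\infty)$ and Neumann-type condition $-\lim_{t\to0^+}t^{1-2s}\partial_t U=\kappa_{s,N}hu$ on the basis. Next I would straighten the boundary near $x_0$ using the $C^{1,1}$ function $g$ of \eqref{hypo-g}: a local diffeomorphism $F$ sending $\{x_N<g(x')\}$ to $\{x_N<0\}$ transforms the equation into $\dive(t^{1-2s}A(z)\nabla W)=0$ on a half-cylinder over the half-space $\{x_N<0\}$, where $A$ is a perturbation of the identity with $\|A-\mathrm{Id}\|=O(|z|)$ because $g\in C^{1,1}$. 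Crucially, the diffeomorphism is reflection-compatible, so after an odd reflection across $\{x_N=0\}$ we obtain a problem on a genuine half-ball in $\R^{N+1}_+$ whose solutions are odd in $\theta_N$ — this is exactly why the eigenvalue problem \eqref{prob-eigenvalues} is posed in $H^1_{\mathrm{odd}}$. I would then record the regularity of $h$ after the change of variables and absorb the potential term into a right-hand side controlled via \eqref{hypo-h} and the Sobolev inequality \eqref{2ast}.

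With the straightened, reflected problem in hand, I would introduce the Almgren frequency function $N(r)=r D(r)/H(r)$, where $H(r)=r^{-N-2s+1}\int_{\partial B_r^+}t^{1-2s}W^2$ and $D(r)$ is the associated Dirichlet-type energy over $B_r^+$, weighted by $t^{1-2s}$ and twisted by $A$. Differentiating $H$ and $D$ requires a Pohozaev-type identity; here I would invoke \cite[Proposition 2.3]{fellisiclari}, which provides the needed Sobolev regularity for this class of degenerate/singular problems and legitimizes the integration by parts. The perturbation terms coming from $A-\mathrm{Id}$, from the potential $h$, and from the lower-order remainders of the diffeomorphism all contribute terms that are, after Hardy--Sobolev--type estimates, bounded by $\co\cdot r^{\delta}$ times the main quantities; this yields $N'(r)\ge -\co(1+N(r))$ on some interval $(0,r_0)$, hence $N(r)$ has a finite limit $\gamma:=N(0^+)\ge0$ as $r\to0^+$, and doubling estimates $H(2r)\le C H(r)$ follow.

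The blow-up step is then standard in structure. Setting $W_\la(z)=W(\la z)/\sqrt{H(\la)}$, the doubling estimates give uniform $H^1(B_1^+,t^{1-2s})$ bounds, so along a subsequence $W_\la\rightharpoonup \widetilde W$ weakly and (by compactness of the weighted trace and Rellich embeddings) strongly in the relevant norms, with $\widetilde W\not\equiv0$ because $\int_{\partial B_1^+}t^{1-2s}\widetilde W^2=1$. Since the perturbations vanish in the limit, $\widetilde W$ is a nontrivial global solution of $\dive(t^{1-2s}\nabla\widetilde W)=0$ on $\R^{N+1}_+$ with homogeneous Neumann condition, odd in $\theta_N$, and the monotonicity of $N$ forces $\widetilde W$ to be homogeneous of degree $\gamma$. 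Separating variables, $\gamma$ must be an eigenvalue exponent of \eqref{prob-eigenvalues} — so $\gamma=m_0$ with $\mu_{m_0}=m_0^2+m_0(N-2s)$, $m_0$ odd when $N=1$ by \eqref{eigenvalues} and Remark \ref{rem-Y-not-0} — and $\widetilde W(r\theta)=r^{m_0}Y(\theta)$ for some eigenfunction $Y$. A separate argument identifying $\lim_{\la\to0}\la^{-2m_0}H(\la)$ with a positive constant (using that $N\equiv m_0$ cannot occur for a smaller exponent, via Remark \ref{rem-Y-not-0} ruling out $Y\equiv0$ on $\mathbb S'$) upgrades the subsequential convergence to a full limit and pins down the normalization, giving $\la^{-m_0}W(\la z)\to |z|^{m_0}Y(z/|z|)$. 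Finally, undoing the reflection and the diffeomorphism, and taking traces $t=0$ with $x_N<0$ (which produces the $\widehat Y$ truncation in \eqref{eq:asy-u}), together with the continuity of the trace operator \eqref{trace-Omega} and the fractional Sobolev trace inequality, transfers the convergence to $\la^{-m_0}u(\la x+x_0)\to|x|^{m_0}\widehat Y(x/|x|,0)$ in $H^s(B_1')$.

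\textbf{Main obstacle.} The delicate point is the derivation of the monotonicity formula in the presence of \emph{both} the edge singularity of the domain and the Muckenhoupt weight $t^{1-2s}$: one must show that the diffeomorphism-induced coefficient matrix $A$, although merely Lipschitz (since $g\in C^{1,1}$), still produces error terms in $N'(r)$ that are integrable near $r=0$ and do not destroy the lower bound — this is where the precise $C^{1,1}$ hypothesis and the regularity theory of \cite{fellisiclari} are essential, and getting the Pohozaev identity to hold with the weight and the mixed Dirichlet--Neumann edge is the technical heart of the argument.
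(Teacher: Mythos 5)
Your plan follows the paper's route step by step (extension, straightening via $F$, odd reflection, Almgren frequency with the Pohozaev identity of \cite{fellisiclari}, blow-up to a $\gamma$-homogeneous solution, identification $\gamma=m_0$ via \eqref{prob-eigenvalues}, then traces), and up to the blow-up along subsequences it is sound. The genuine gap is in the final step, where you write that ``a separate argument identifying $\lim_{\la\to0}\la^{-2m_0}H(\la)$ with a positive constant (using \dots Remark \ref{rem-Y-not-0} ruling out $Y\equiv0$ on $\mathbb S'$) upgrades the subsequential convergence to a full limit''. This is exactly the nontrivial part you have not supplied, and the tool you cite is not the one that does the job: Remark \ref{rem-Y-not-0} only guarantees that the trace of the limit profile is nontrivial (it is what makes the unique continuation corollary work), it neither shows that $\la^{-2\gamma}H(\la)$ has a strictly positive limit nor that the blow-up limit is independent of the subsequence. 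Note also that positivity of that limit alone would not give a full limit: the eigenspace $V_{m_0}$ can have dimension larger than one, so different subsequences could a priori produce different eigenfunctions with the same normalization. The paper closes this by a Fourier-type analysis: it expands $W(\la\cdot)$ on $\mathbb S^+$ in the basis $\{Y_{m,k}\}$, derives an ODE for the coefficients $\varphi_{m_0,k}(\la)$ with inhomogeneity controlled through the quantities $\Upsilon_{m_0,k}$ in \eqref{Upsilon}, obtains the asymptotics \eqref{eq-Fourier-coefficents}, proves $\lim_{\la\to0^+}\la^{-2\gamma}H(\la)>0$ by contradiction with the normalization of the subsequential limit $\Psi$, and finally identifies the limit coefficients $\beta_k$ by the explicit, subsequence-independent formula \eqref{beta-k}, concluding by Urysohn's principle. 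Without some substitute for this coefficient analysis your argument only yields convergence along subsequences, with possibly different limit profiles.

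Two smaller points. First, strong $H^1(B_1^+,t^{1-2s})$ convergence of the rescalings does not follow from ``compactness of the weighted trace and Rellich embeddings'' alone: the paper has to test the equation satisfied by $V^\la$ against $V^\la$ itself, using the Sobolev-type regularity of \cite{fellisiclari} to control $\widetilde A\nabla V^\la\cdot z$ on $\mathbb S^+$ and pass to the limit in the boundary term; you should at least indicate this mechanism. Second, your normalization of $H$ has a slip in the exponent (it should be $r^{-(N+1-2s)}$, i.e. $r^{-N-1+2s}$, as in \eqref{H}); harmless, but worth fixing since the exponent enters the doubling and lower bounds \eqref{ineq-H-upper-estimate}--\eqref{ineq-H-lower-estimate} that your compactness step relies on.
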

Unlike the analogous result for the restricted fractional
Laplacian established in \cite{de2021strong}, the order of homogeneity
of limit profiles does not depend on $s$ and it is always an
integer. This is a consequence of the regularity of the eigenfunctions
of \eqref{prob-eigenvalues}, see Appendix
\ref{appendix-eigenvalues-half-sphere} for further details. In
  particular, the eigenfunctions of \eqref{prob-eigenvalues}, after an
  even reflection through the equator $\theta_{N+1}=0$, turn out to be
  smooth thanks to \cite[Theorem 1.1]{STV}; therefore they are much
  more regular than the solutions of the corresponding problem on the
  half-sphere appearing in \cite{de2021strong} and presenting mixed
  boundary conditions which are responsible for a lower regularity.

Theorem \ref{theor-blow-up-down} is proved by passing to the
  trace in the following blow-up result for solutions of the
  extended problem \eqref{prob-extension}. 
\begin{theorem}\label{theor-blow-up-extended}
  Let $N>2s$ and $\Omega\subset \R^N$ be a bounded Lipschitz
  domain. Let $x_0 \in \partial \Omega$ and assume that there exist
  $R>0$ and a function $g$ satisfying \eqref{hypo-g},
  \eqref{hypo-g-boundary-Omega}, and \eqref{hypo-g-Omega}. Let $U$ be
  a non trivial solution to \eqref{prob-extension} in the sense of
  \eqref{eq-weak}, with $h$ satisfying \eqref{hypo-h}.  Then there
  exist $m_0\in \mathbb{N} \setminus \{0\}$ (which is odd
    in the case $N=1$) and eigenfunction $Y$ of
  \eqref{prob-eigenvalues}, associated to the eigenvalue
  $m_0^2 +m_0(N-2s)$, such that, letting $z_0=(x_0,0)$,
\begin{equation}\label{eq:asyU}
  \la^{-m_0} U(\la z+z_0) \to |z|^{m_0}\widehat Y\left(\frac{z}{|z|}\right)
 \quad \text{as }\la \to 0^+
  \quad \text{in } H^1(B_1^+,t^{1-2s}),
\end{equation}
where $B_1^+=\{z=(x,t)\in\R^N\times(0,+\infty):|z|<1\}$ and $U$ is
  trivially extended to zero outside $\mathcal C_\Omega$.
\end{theorem}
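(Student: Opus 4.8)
The plan is to adapt, to the degenerate/singular weight $t^{1-2s}$ and to the \emph{edge} geometry at $z_0=(x_0,0)$, the now classical scheme that couples an Almgren-type monotonicity formula with a blow-up argument. First I would localize and straighten: by the $C^{1,1}$ diffeomorphism $F(x,t)=(G(x),t)$ of Section~\ref{sec-straightening the boundary}, which flattens $\partial\Omega$ near $x_0$ (with $G=\mathrm{Id}$ to first order at $x_0$) and leaves the variable $t$ untouched, the problem \eqref{prob-extension} becomes, near $z_0$,
\[
  \dive\big(t^{1-2s}A(x)\nabla W\big)=0 \ \text{ in }\ \{x_N<0,\ t>0,\ |z|<R\},\qquad -\lim_{t\to0^+}t^{1-2s}\partial_t W=\kappa_{s,N}\,\tilde h\,W(\cdot,0) \ \text{ on }\ \{x_N<0,\ t=0\},
\]
together with $W=0$ on $\{x_N=0,\ t>0\}$, where $W=U\circ F^{-1}$, $A$ is symmetric and uniformly elliptic with $A(x)=\mathrm{Id}+O(|x-x_0|)$ by \eqref{hypo-g}, and $\tilde h$ equals $h\circ G^{-1}$ times a smooth positive factor, hence still satisfies a bound of the type \eqref{hypo-h}. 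Performing the odd reflection $W(x',x_N,t)\mapsto -W(x',-x_N,t)$ across $\{x_N=0\}$ — legitimate because of the homogeneous Dirichlet datum there — extends $W$ to a half-ball $B_R^+$ with an $A_2$-weight and a coefficient matrix still of the form $\mathrm{Id}+O(|z|)$; the blow-up profiles of such $W$ are then supported, in the unreflected picture, on $\{x_N<0\}$ and live on the half-sphere $\mathbb{S}^+$ subject to the oddness condition \eqref{H1d}. This is exactly the eigenvalue problem \eqref{prob-eigenvalues}, which is what produces $H^1_{\mathrm{odd}}$ and, through \eqref{eigenvalues}, the integrality of the homogeneity exponents (and their oddness when $N=1$).

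Next I would set up the monotonicity formula. For $r\in(0,R)$ put
\[
  H(r):=\frac{1}{r^{N+1-2s}}\int_{\partial B_r^+}t^{1-2s}\,\mu_A\,W^2\,dS,\qquad \mu_A(z):=\frac{A(z)z\cdot z}{|z|^2},
\]
let $D(r)$ be the correspondingly rescaled quadratic form $\int_{B_r^+}t^{1-2s}A\nabla W\cdot\nabla W$ corrected by the potential contribution coming from $\tilde h$, and set $\mathcal N(r):=D(r)/H(r)$. Differentiating $\mathcal N$ requires a Pohozaev–Rellich identity for $W$: I would invoke \cite[Proposition~2.3]{fellisiclari}, applicable here thanks to the Sobolev regularity theory for degenerate/singular problems quoted there. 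Plugging $A=\mathrm{Id}+O(|z|)$ and the subcriticality of the potential term (controlled by \eqref{hypo-h}, Hölder's inequality and the Sobolev inequality \eqref{2ast}) into the resulting expression for $\mathcal N'(r)$, one obtains the almost-monotonicity of $\mathcal N$, so that $\mathcal N$ is bounded near $0$ and $\gamma:=\lim_{r\to0^+}\mathcal N(r)$ exists and is finite. Together with the non-degeneracy $H(r)>0$ for all small $r$ (if $H(r_0)=0$ then $W\equiv0$ near $z_0$, against the unique continuation principle for $A_2$-weighted elliptic equations, as in Remark~\ref{rem-Y-not-0}), this yields the doubling-type bounds $c_1 r^{2\gamma}\le H(r)\le c_2 r^{2\gamma}$ and, by a finer computation, the existence and positivity of $\lim_{r\to0^+}r^{-2\gamma}H(r)$.

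Then comes the blow-up. Let $W^\lambda(z):=W(\lambda z)/\sqrt{H(\lambda)}$ for $\lambda\in(0,R)$; the bound on $\mathcal N$ and the doubling estimate make $\{W^\lambda\}$ bounded in $H^1(B_1^+,t^{1-2s})$, so along some $\lambda_n\to0^+$ we have $W^{\lambda_n}\rightharpoonup\Psi$ weakly in $H^1(B_1^+,t^{1-2s})$ and, by compactness of the weighted embedding and of the weighted trace, strongly in $L^2(B_1^+,t^{1-2s})$ and in $L^2$ on traces, with $\int_{\partial B_1^+}t^{1-2s}\Psi^2\,dS=1$ so that $\Psi\not\equiv0$. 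Passing to the limit in the weak formulation (the potential term drops out by subcriticality, and $A(\lambda_n\,\cdot)\to\mathrm{Id}$), $\Psi$ solves $\dive(t^{1-2s}\nabla\Psi)=0$ on $B_1^+$ with zero Neumann datum on $\{t=0\}$, is odd in $x_N$, and has constant Almgren frequency equal to $\gamma$; the standard rigidity argument then forces $\Psi(z)=|z|^\gamma Y(z/|z|)$ with $Y\in H^1_{\mathrm{odd}}(\mathbb{S}^+,\theta_{N+1}^{1-2s})$ an eigenfunction of \eqref{prob-eigenvalues} with eigenvalue $\gamma(\gamma+N-2s)$. By the explicit spectrum \eqref{eigenvalues} this means $\gamma=m_0\in\mathbb{N}\setminus\{0\}$ (odd if $N=1$), with eigenvalue $m_0^2+m_0(N-2s)$.

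Finally I would prove that the blow-up limit is independent of the subsequence, which promotes the convergence to the full limit $\lambda\to0^+$ and gives \eqref{eq:asyU}. Expanding $W(\lambda\theta)$ in the orthonormal basis $\{Y_{m,k}\}$ of $L^2(\mathbb{S}^+,\theta_{N+1}^{1-2s})$ and writing the system of ODEs solved by the coefficients $\varphi_{m,k}(\lambda):=\int_{\mathbb{S}^+}\theta_{N+1}^{1-2s}W(\lambda\theta)Y_{m,k}(\theta)\,dS$ — whose forcing terms are controlled by $A=\mathrm{Id}+O(|z|)$ and by \eqref{hypo-h} — one shows that $\lambda^{-m_0}\varphi_{m_0,k}(\lambda)$ has a finite limit for each $k\in\{1,\dots,M_{m_0}\}$ while $\lambda^{-m_0}\varphi_{m,k}(\lambda)\to0$ for $m\ne m_0$; this pins down $\lambda^{-m_0}W(\lambda z)\to|z|^{m_0}Y(z/|z|)$ in $H^1(B_1^+,t^{1-2s})$. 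Undoing the reflection and the diffeomorphism $F$ (which is the identity to first order, so it only relabels the spherical variable in the limit) yields \eqref{eq:asyU} with $\widehat Y$ as in the statement. I expect the two main obstacles to be: (i) making the almost-monotonicity of $\mathcal N$ rigorous, i.e. bounding with integrable-in-$r$ remainders all the error terms in the Pohozaev identity produced by the non-flat matrix $A$ and by the potential $\tilde h$; and (ii) the uniqueness-of-blow-up step, whose delicate point is the ODE argument excluding oscillation of the resonant Fourier mode $\varphi_{m_0,k}$.
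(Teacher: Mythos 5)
Your proposal follows essentially the same route as the paper: straightening the boundary with the $C^{1,1}$ diffeomorphism and odd reflection across $\{y_N=0\}$, the Almgren frequency with the Pohozaev identity of \cite{fellisiclari}, the blow-up to a homogeneous eigenfunction of \eqref{prob-eigenvalues} giving $\gamma=m_0$, positivity of $\lim_{r\to0^+}r^{-2\gamma}H(r)$ and uniqueness of the limit via the Fourier coefficients $\varphi_{m_0,k}$, and finally undoing the reflection and the diffeomorphism. The only slight imprecision is that the two-sided bound $c_1r^{2\gamma}\le H(r)\le c_2 r^{2\gamma}$ is not available at the monotonicity stage (the paper gets only $H(r)\le c_0r^{2\gamma}$ and $H(r)\ge c_\sigma r^{2\gamma+\sigma}$ there, with the sharp lower bound coming a posteriori from the Fourier-mode argument), but since you invoke exactly that argument at the end, the overall logic matches the paper's.
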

In Theorem \ref{theor-blow-up-extended-all} a more precise
  characterization of the function $\widehat Y$ appearing in
  \eqref{eq:asy-u} and \eqref{eq:asyU} is given, by writing it as a linear combination of
  the eigenfunctions $Y_{m_0,k}$ with coefficients computed in \eqref{beta-k}.

From Remark \ref{rem-Y-not-0}, Theorem \ref{theor-blow-up-down} and
Theorem \ref{theor-blow-up-extended} we deduce the following unique
continuation principles.
\begin{corollary}
  Let $N>2s$ and $\Omega\subset \R^N$ be a bounded Lipschitz
  domain. Let $x_0 \in \partial \Omega$ and assume that there exist
  $R>0$ and a function $g$ satisfying \eqref{hypo-g},
  \eqref{hypo-g-boundary-Omega}, and \eqref{hypo-g-Omega}.
 Let $u$ be
  a solution to \eqref{eq-spec-lapla} in the sense of
  \eqref{eq-spec-lapla-weak} and $U$ be
  a solution to \eqref{prob-extension} in the sense of
  \eqref{eq-weak}, with $h$ satisfying \eqref{hypo-h}.
\begin{itemize}
\item[(i)] If $u(x)=O\left(|x-x_0|^k\right)$ as $x \to x_0$ for any
  $k \in \mathbb{N}$, then $u\equiv 0$ in $\Omega$.
\item[(ii)] If $U(z)=O\left(|z-(x_0,0)|^k\right)$ as $z \to (x_0,0)$
  for any $k \in \mathbb{N}$, then $U\equiv 0$ on
  $\mathcal{C}_\Omega$.
\end{itemize}
\end{corollary}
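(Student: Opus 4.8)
The plan is to derive both statements by contradiction, using the blow-up classification of Theorems \ref{theor-blow-up-down} and \ref{theor-blow-up-extended} together with the non-vanishing property of eigenfunctions recorded in Remark \ref{rem-Y-not-0}.

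First I would treat (i). Suppose $u\not\equiv0$ in $\Omega$. Then Theorem \ref{theor-blow-up-down} provides $m_0\in\mathbb N\setminus\{0\}$ (odd when $N=1$) and a nontrivial eigenfunction $Y$ of \eqref{prob-eigenvalues} associated to $m_0^2+m_0(N-2s)$ such that $\la^{-m_0}u(\la x+x_0)\to|x|^{m_0}\widehat Y(x/|x|,0)=:W(x)$ in $H^s(B_1')$, hence in $L^2(B_1')$, as $\la\to0^+$. On the other hand, the hypothesis $u(x)=O(|x-x_0|^k)$ for every $k\in\mathbb N$, applied with $k=m_0+1$, gives a constant $C>0$ such that $|\la^{-m_0}u(\la x+x_0)|\le C\la$ for a.e.\ $x\in B_1'$ and all sufficiently small $\la>0$; thus the same sequence of rescalings converges to $0$ in $L^2(B_1')$. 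By uniqueness of the $L^2$-limit we obtain $W\equiv0$ in $B_1'$, so $\widehat Y(x/|x|,0)=0$ for a.e.\ $x\in B_1'\setminus\{0\}$, that is $\widehat Y\equiv0$ on $\mathbb S'$. Since $\widehat Y$ agrees with $Y$ on $\{\theta_N<0\}$ by \eqref{eq:asy-u}, this forces $Y\equiv0$ on $\{\theta\in\mathbb S':\theta_N<0\}$, and then the oddness condition in \eqref{H1d} (together with the fact that $\{\theta_N=0\}$ is negligible on $\mathbb S'$) propagates this to all of $\mathbb S'$. This contradicts Remark \ref{rem-Y-not-0}, so necessarily $u\equiv0$ in $\Omega$.

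The proof of (ii) follows the same scheme with Theorem \ref{theor-blow-up-extended} in place of Theorem \ref{theor-blow-up-down}: if $U\not\equiv0$, the decay assumption $U(z)=O(|z-(x_0,0)|^k)$ for all $k$ forces the blow-up limit $|z|^{m_0}\widehat Y(z/|z|)$ to vanish in $B_1^+$, by the same uniqueness-of-limit argument now carried out in $L^2(B_1^+,t^{1-2s})$; hence $\widehat Y\equiv0$ on $\mathbb S^+$, so by \eqref{eq:asy-u} we get $Y\equiv0$ on $\{\theta\in\mathbb S^+:\theta_N<0\}$, and by oddness $Y\equiv0$ on $\mathbb S^+$, contradicting that $Y$ is a nontrivial eigenfunction of \eqref{prob-eigenvalues}. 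Since all the substantial analysis is already contained in the two blow-up theorems and in Remark \ref{rem-Y-not-0}, there is no genuine difficulty left in the corollary; the only point deserving a line of care is the passage from the pointwise decay bound to the $L^2$-convergence of the rescaled functions to zero, which is routine.
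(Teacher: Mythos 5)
Your proof is correct and follows essentially the route the paper intends, since the corollary is presented there as a direct consequence of Remark \ref{rem-Y-not-0} and Theorems \ref{theor-blow-up-down} and \ref{theor-blow-up-extended}: infinite-order vanishing forces the homogeneous blow-up limit to vanish, and then the oddness of $Y$ together with Remark \ref{rem-Y-not-0} (respectively the nontriviality of $Y$ in case (ii)) gives the contradiction. No further comment is needed.
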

The paper is organized as follows. In Section
\ref{sec-notation-preliminaries} we fix some notation used throughout
the paper and recall some preliminary results concerning
  functional inequalities and trace operators.  In Section
\ref{sec-straightening the boundary} we apply the local diffeomorphism
introduced in \cite{adolfsson1997c1}, see also \cite[Section
2]{de2021strong}, to write an equivalent formulation of problem
  \eqref{prob-extension} on a domain with a straightened lateral
  boundary in a neighbourhood of $x_0$, see
  \eqref{prob-extension-B-}. In Section
\ref{sec-the-monotonicity-formula} we study the Almgren-type
  frequency function associated to the auxiliary problem
  \eqref{prob-extension-B-} and prove its boundedness, on which the
blow-up analysis developed in Section \ref{sec-blow-up-analysis} is
based. Finally in Section \ref{sec-proofs-of-the-main-results} we
prove our main results and in Appendix
\ref{appendix-eigenvalues-half-sphere} we compute the eigenvalues of
problem \eqref{prob-eigenvalues}.

\section{Notations and
  preliminaries} \label{sec-notation-preliminaries} In this section we
present some notation used throughout the paper and prove some
preliminary results concerning functional inequalities and trace
operators.

For every $r>0$, let 
\begin{align*}
  &B_r^+:=\{z\in \R^{N+1}_+: |z| <r\},
    &&S_r^+:=\{z\in \R^{N+1}_+: |z|=r\},  \\
  &B_r':=\{x  \in \R^N: |x| <r\}, 
    &&S_r':=\{x  \in \R^N: |x| =r\}. 
\end{align*}
For every $r>0$ we define the space 
\begin{equation*}
  H^1_{0,S^+_r}(B^+_{r},t^{1-2s}):=
  \overline{\{\phi \in  C^{\infty}(\overline{B^+_r}):\phi
    =0 \text{ in a neighbourhood of } S_r^+\}}^{\norm{\cdot}_{H^1(B^+_{r},t^{1-2s})}},
\end{equation*}
as the closure in $H^1(B^+_{r},t^{1-2s})$ of the set of all functions
in $C^{\infty}(\overline{B^+_r})$ vanishing in a neighbourhood of
$S_r^+$.
\begin{remark}\label{rem:ext-cil} Since
  $B_r^+ \subset B_r' \times (0,+\infty)$, the trivial extension to
  $0$ is a linear and continuous operator from
    $H^1_{0,S^+_r}(B^+_{r},t^{1-2s})$ to
    $H^1_{0,L}(\mathcal{C}_{B'_r},t^{1-2s})$.
\end{remark}

\begin{proposition} \label{prop-traces}
For every $r >0$ there exists a linear and continuous trace operator 
\begin{equation*}
\Tr: H^1(B^+_{r},t^{1-2s})  \to H^s(B_r')\\	
\end{equation*}
such that the restriction of $\Tr$ to
$H^1_{0,S^+_r}(B^+_{r},t^{1-2s})$ coincides with the restriction of
$\mathop{\rm{Tr}}_{B_r'}$ to $H^1_{0,S^+_r}(B^+_{r},t^{1-2s})$.  In
particular, for every $r>0$,
\begin{equation*}
\Tr(H^1_{0,S^+_r}(B^+_{r},t^{1-2s})) \subseteq \mathbb{H}^s(B_r').
\end{equation*}
\end{proposition}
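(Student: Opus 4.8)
The plan is to reduce the statement to the trace operator $\mathop{\rm{Tr}}_{B'_{2r}}$ on the cylinder $\mathcal{C}_{B'_{2r}}$ furnished by \cite[Proposition 2.1]{MR2825595}, after extending across the spherical part $S^+_r$ of $\partial B^+_r$. First I would fix $r>0$ and observe that $B^+_r$ is a bounded Lipschitz domain in $\R^{N+1}$ and that $|t|^{1-2s}$ is a Muckenhoupt $A_2$ weight on $\R^{N+1}$, since $-1<1-2s<1$. Hence, by the extension theorems for weighted Sobolev spaces with $A_2$ weights in \cite{MR1245837} (and since $t^{1-2s}=|t|^{1-2s}$ on $B^+_r$), there is a bounded linear extension operator $\mathcal{E}\colon H^1(B^+_r,t^{1-2s})\to H^1(\R^{N+1},|t|^{1-2s})$ with $\mathcal{E}v=v$ a.e.\ on $B^+_r$. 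Picking $\chi\in C^\infty_c([0,2r))$ with $\chi\equiv 1$ on $[0,r]$ and setting $\mathcal{E}_0v(z):=\chi(|z|)\,\mathcal{E}v(z)$, one obtains a bounded operator $\mathcal{E}_0\colon H^1(B^+_r,t^{1-2s})\to H^1(\R^{N+1},|t|^{1-2s})$ with $\mathcal{E}_0v=v$ on $B^+_r$ and $\operatorname{supp}(\mathcal{E}_0v)\subseteq\{z\in\R^{N+1}:|z|<2r\}$. Since $|x|=2r$ forces $|(x,t)|\geq 2r$, the set $\{|z|<2r\}$ is disjoint from $\partial_L\mathcal{C}_{B'_{2r}}$, so a mollification argument (using $|t|^{1-2s}\in A_2$) shows that $(\mathcal{E}_0v)|_{\mathcal{C}_{B'_{2r}}}$ belongs to $H^1_{0,L}(\mathcal{C}_{B'_{2r}},t^{1-2s})$, with norm controlled by $\co\,\|v\|_{H^1(B^+_r,t^{1-2s})}$.

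I would then set
\[
  \Tr v:=\mathop{\rm{Tr}}\nolimits_{B'_{2r}}\!\bigl((\mathcal{E}_0v)|_{\mathcal{C}_{B'_{2r}}}\bigr)\big|_{B'_r}
\]
and read off the continuity of $\Tr\colon H^1(B^+_r,t^{1-2s})\to H^s(B'_r)$ as a composition of continuous maps: $\mathcal{E}_0$, restriction to $\mathcal{C}_{B'_{2r}}$ (into $H^1_{0,L}(\mathcal{C}_{B'_{2r}},t^{1-2s})$), the operator $\mathop{\rm{Tr}}_{B'_{2r}}$ of \cite[Proposition 2.1]{MR2825595} (continuous into $\mathbb{H}^s(B'_{2r})$), the continuous inclusion $\mathbb{H}^s(B'_{2r})\hookrightarrow H^s(B'_{2r})$ given by the interpolation characterization of $\mathbb{H}^s$, and the continuous restriction $H^s(B'_{2r})\to H^s(B'_r)$.

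It remains to verify the compatibility assertion. For $v\in H^1_{0,S^+_r}(B^+_r,t^{1-2s})$ let $\widetilde v\in H^1_{0,L}(\mathcal{C}_{B'_r},t^{1-2s})$ be its trivial extension, which depends continuously on $v$ by Remark \ref{rem:ext-cil}; then $v\mapsto\Tr v$ and $v\mapsto\mathop{\rm{Tr}}_{B'_r}(\widetilde v)$ are continuous linear maps from $H^1_{0,S^+_r}(B^+_r,t^{1-2s})$ to $H^s(B'_r)$, so it suffices to check they coincide on the dense subset of those $\phi\in C^\infty(\overline{B^+_r})$ that vanish in a neighbourhood of $S^+_r$. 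For such a $\phi$, both $\mathcal{E}_0\phi$ and the further trivial extension of $\widetilde\phi$ to $\mathcal{C}_{B'_{2r}}$ agree with $\phi$ on $B^+_r$; moreover, for every $r'\in(0,r)$, the half-ball $B^+_r$ contains the relative neighbourhood $B'_{r'}\times[0,\sqrt{r^2-(r')^2})$ of $B'_{r'}\times\{0\}$ inside $\overline{\mathcal{C}_{B'_{2r}}}$. By the locality of the trace operators — i.e.\ the fact that $\mathop{\rm{Tr}}_{B'_{2r}}(w)$ and $\mathop{\rm{Tr}}_{B'_r}(w)$ depend only on the restriction of $w$ to an arbitrarily thin neighbourhood of $\{t=0\}$, which follows from their construction as the continuous extension of the classical trace $w\mapsto w(\cdot,0)$ on smooth functions vanishing near the lateral boundary — one gets $\Tr\phi=\mathop{\rm{Tr}}_{B'_r}(\widetilde\phi)$ a.e.\ on $B'_{r'}$ for every $r'\in(0,r)$, hence a.e.\ on $B'_r$. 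By density this gives $\Tr v=\mathop{\rm{Tr}}_{B'_r}(\widetilde v)$ for all $v\in H^1_{0,S^+_r}(B^+_r,t^{1-2s})$; in particular $\Tr\bigl(H^1_{0,S^+_r}(B^+_r,t^{1-2s})\bigr)\subseteq\mathbb{H}^s(B'_r)$, since $\mathop{\rm{Tr}}_{B'_r}$ takes values in $\mathbb{H}^s(B'_r)$.

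The step I expect to be the main obstacle is this last compatibility/locality argument: near the ``equator'' $\{|x|=r,\ t=0\}$ the half-ball $B^+_r$ collapses onto the hyperplane $\{t=0\}$, so $B^+_r$ does not contain a full slab $B'_r\times(0,\delta)$ and one must argue on $B'_{r'}$ for $r'<r$ and then let $r'\uparrow r$. Making the locality of $\mathop{\rm{Tr}}_{B'_r}$ and $\mathop{\rm{Tr}}_{B'_{2r}}$ precise — via approximation by smooth functions vanishing near the lateral boundary together with a cut-off in the $x$-variable supported in $B'_{r'}$ — is the only genuinely technical point; the extension and the continuity of $\Tr$ are routine once \cite{MR1245837} and \cite[Proposition 2.1]{MR2825595} are available.
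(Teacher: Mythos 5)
Your argument is correct in outline, but it takes a genuinely different route from the paper for the existence of the trace on the half-ball. The paper does not construct $\Tr$ at all: it simply cites the known existence of a continuous trace $\Tr: H^1(B_r^+,t^{1-2s})\to H^s(B_r')$ (as observed in \cite[Proposition 2.1]{YYLi} and \cite{MR3023003,MR0350177}), notes via Remark \ref{rem:ext-cil} that $\mathop{\rm Tr}_{B_r'}$ is defined on $H^1_{0,S^+_r}(B^+_{r},t^{1-2s})$ with values in $\mathbb{H}^s(B_r')$, and then checks that both operators reduce to the restriction $u\mapsto u|_{B_r'\times\{0\}}$ on the dense class of $\phi\in C^\infty(\overline{B_r^+})$ vanishing near $S_r^+$, so density concludes immediately. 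You instead manufacture $\Tr$ from the cylinder trace of \cite[Proposition 2.1]{MR2825595} by a weighted extension (Chua, \cite{MR1245837}, legitimate since $B_r^+$ is Lipschitz and $|t|^{1-2s}\in A_2$) plus a radial cutoff, which makes the statement self-contained modulo results already used in the paper, but at the price of two technical steps that you only sketch: (i) that a function of $H^1(\mathcal{C}_{B'_{2r}},t^{1-2s})$ supported at positive distance from $\partial_L\mathcal{C}_{B'_{2r}}$ lies in $H^1_{0,L}(\mathcal{C}_{B'_{2r}},t^{1-2s})$ (even reflection in $t$ plus mollification, using $W^{1,1}$-embedding and $A_2$-boundedness of the maximal function, does work), and (ii) the locality of $\mathop{\rm Tr}_{B'_{2r}}$ and $\mathop{\rm Tr}_{B'_{r}}$, needed because your $\Tr$ is built from an extension that alters the function outside $B_r^+$ (a cutoff-in-$x$ argument, exhausting $B'_r$ by $B'_{r'}$ with $r'<r$, as you indicate). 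The paper's citation-based proof avoids both technicalities; your construction buys explicitness and independence from \cite{YYLi,MR3023003}, and both versions share the same final density/compatibility argument and the same deduction of $\Tr(H^1_{0,S^+_r}(B^+_{r},t^{1-2s}))\subseteq\mathbb{H}^s(B_r')$.
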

\begin{proof}
  Thanks to Remark \ref{rem:ext-cil}, the operator
  $\mathop{\rm Tr}_{B_r'}$ defined in \eqref{trace-Omega} is well
  defined on $H^1_{0,S^+_r}(B^+_{r},t^{1-2s})$ and
  $\mathop{\rm Tr}_{B_r'}(H^1_{0,S^+_r}(B^+_{r},t^{1-2s})) \subseteq
  \mathbb{H}^s(B_r')$.  Furthermore, as observed in
  \cite[Proposition 2.1]{YYLi} and \cite{MR3023003,MR0350177}, there
  exists a linear, continuous trace operator
  $\Tr: H^1(B^+_{r},t^{1-2s}) \to H^s(B_r')$.  For every
  $u \in \{\phi \in C^{\infty}(\overline{B_r^+}):\phi=0 \text{ on a
    neighbourhood of }
  S_r^+\}$, we have that
  $\Tr(u)=u_{|_{B_r' \times
      \{0\}}}=\mathop{\rm Tr}_{B_r'}(u)$. By density we conclude that $\Tr$ and
  $\mathop{\rm Tr}_{B_r'}$ are equal on
  $H^1_{0,S^+_r}(B^+_{r},t^{1-2s})$.
\end{proof}
We observe that $H^1(B_r^+,t^{1-2s})\subset W^{1,1}(B_r^+)$,
  hence, denoting as $\Tr_1$ the classical trace operator from
  $W^{1,1}(B_r^+)$ to $L^1(S_r^+)$, we can consider its restriction to
  $H^1(B^+_{r},t^{1-2s})$, still denoted as $\Tr_1$; from
  \cite[Theorem 19.7]{MR1069756} and the Divergence Theorem one can
  easily deduce that, for any $r>0$, such a restriction is a linear,
  continuous trace operator
\begin{equation}\label{trace-Sr+}
\mathop{\rm{Tr}_1}: H^1(B^+_{r},t^{1-2s})  \to L^2(S_r^+, t^{1-2s})
\end{equation}
which is also compact.  With a slight abuse of notation, from now on we
will simply write $v$ instead of $\mathop{\rm{Tr}_1}(v)$ on $S^+_r$.

We recall from \cite[Lemma 2.6]{Fall-Felli-2014} the following
Sobolev-type inequality with boundary terms.
\begin{proposition} 
  There exists a constant $\mathcal{S}_{N,s}>0$ such that, for all
  $r>0$ and $v \in H^1(B_r^+,t^{1-2s})$,
\begin{equation}\label{ineq-soblev-trace}
  \left(\int_{B'_r} |\Tr(v)|^{2^*_s} \, dx\right)^{\frac{2}{2^*_s}}
  \le \mathcal{S}_{N,s}\left(\int_{B^+_r} t^{1-2s}|\nabla v|^2 \, dz
    +\frac{N-2s}{2r}\int_{S_r^+} t^{1-2s}v^2 \, dS \right),
\end{equation}
where $2^*_s$ is defined as in \eqref{2ast}.
\end{proposition}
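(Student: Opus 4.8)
The plan is to reduce \eqref{ineq-soblev-trace} to the case $r=1$ by scaling, and then to derive the $r=1$ inequality by combining the trace embedding of Proposition~\ref{prop-traces}, the fractional Sobolev inequality, and a weighted Poincar\'e-type inequality on the half-ball which produces the boundary integral over $S_1^+$.

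First I would check that \eqref{ineq-soblev-trace} is invariant under the dilation $v\mapsto v_r$, with $v_r(z):=v(rz)$ for $z\in B_1^+$: a routine change of variables, using $2/2^*_s=(N-2s)/N$ and the fact that the dilation sends the $t$-coordinate to $rt$, shows that each of the quantities $\big(\int_{B_1'}|\Tr(v_r)|^{2^*_s}\,dx\big)^{2/2^*_s}$, $\int_{B_1^+}t^{1-2s}|\nabla v_r|^2\,dz$ and $\frac{N-2s}{2}\int_{S_1^+}t^{1-2s}v_r^2\,dS$ equals $r^{-(N-2s)}$ times the corresponding quantity for $v$ on $B_r^+$. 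Hence it suffices to prove \eqref{ineq-soblev-trace} for $r=1$, and in fact it is enough to establish an inequality of the form
\begin{equation*}
  \Big(\int_{B_1'}|\Tr(v)|^{2^*_s}\,dx\Big)^{2/2^*_s}\le C_{N,s}\Big(\int_{B_1^+}t^{1-2s}|\nabla v|^2\,dz+\int_{S_1^+}t^{1-2s}v^2\,dS\Big)
\end{equation*}
with $C_{N,s}>0$ depending only on $N$ and $s$, since then \eqref{ineq-soblev-trace} follows after rescaling with $\mathcal S_{N,s}=C_{N,s}\max\{1,2/(N-2s)\}$.

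To prove the displayed inequality I would bound its left-hand side using, in succession, the continuous trace operator $\Tr\colon H^1(B_1^+,t^{1-2s})\to H^s(B_1')$ of Proposition~\ref{prop-traces} and the Sobolev embedding $H^s(B_1')\hookrightarrow L^{2^*_s}(B_1')$, which holds because $N>2s$ and $B_1'$ is a bounded Lipschitz (hence $H^s$-extension) domain, via extension to $\R^N$ and the fractional Sobolev embedding $H^s(\R^N)\hookrightarrow L^{2^*_s}(\R^N)$ of \cite[Theorem 6.5]{DNPV}. This yields $\|\Tr(v)\|_{L^{2^*_s}(B_1')}^2\le C\|v\|_{H^1(B_1^+,t^{1-2s})}^2$, i.e. control of the left-hand side by $\int_{B_1^+}t^{1-2s}(v^2+|\nabla v|^2)\,dz$, so it only remains to absorb the bulk zeroth-order term $\int_{B_1^+}t^{1-2s}v^2\,dz$ into a gradient term plus a boundary term on $S_1^+$. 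For this I would reduce by density to $v\in C^\infty(\overline{B_1^+})$ (which is legitimate, as $C^\infty(\overline{B_1^+})$ is dense in $H^1(B_1^+,t^{1-2s})$ and the trace operator \eqref{trace-Sr+} on $S_1^+$ is continuous), pass to polar coordinates $z=\rho\theta$ with $\rho=|z|\in(0,1)$ and $\theta\in\mathbb{S}^+$, and for each fixed $\theta$ integrate over $\rho\in(0,1)$ the identity $\frac{d}{d\rho}\big(\rho^{N+2-2s}v(\rho\theta)^2\big)=(N+2-2s)\rho^{N+1-2s}v(\rho\theta)^2+2\rho^{N+2-2s}v(\rho\theta)\,\partial_\rho v(\rho\theta)$. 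Since $N+2-2s>0$, the boundary contribution at $\rho=0$ vanishes, and Young's inequality, together with $|\partial_\rho v|\le|\nabla v|$ and $\rho^{N+3-2s}\le\rho^{N+1-2s}$ on $(0,1]$, bounds $\int_0^1\rho^{N+1-2s}v(\rho\theta)^2\,d\rho$ in terms of $v(\theta)^2$ and $\int_0^1\rho^{N+1-2s}|\nabla v(\rho\theta)|^2\,d\rho$. Multiplying by $\theta_{N+1}^{1-2s}$, integrating over $\mathbb{S}^+$, and recalling that $t=\rho\theta_{N+1}$, $dz=\rho^N\,d\rho\,dS$ and $t=\theta_{N+1}$ on $S_1^+$, this becomes the weighted Poincar\'e inequality
\begin{equation*}
  \int_{B_1^+}t^{1-2s}v^2\,dz\le\frac{2}{N+2-2s}\int_{S_1^+}t^{1-2s}v^2\,dS+\frac{4}{(N+2-2s)^2}\int_{B_1^+}t^{1-2s}|\nabla v|^2\,dz,
\end{equation*}
and inserting it into the previous bound closes the case $r=1$, hence the proposition.

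The step I expect to be the main obstacle is the weighted Poincar\'e inequality: one must justify the radial integration by parts carefully in the degenerate/singular setting (the vanishing of the boundary term at the origin and the passage from smooth functions to general elements of the $A_2$-weighted space $H^1(B_1^+,t^{1-2s})$), and keep track of the constants so that, after undoing the dilation, the surface integral over $S_r^+$ appears with exactly the coefficient $\frac{N-2s}{2r}$. Everything else --- the scaling, the trace embedding and the fractional Sobolev inequality --- is routine.
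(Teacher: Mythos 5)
Your argument is correct, but it is genuinely different from what the paper does: the paper gives no proof at all, simply recalling the inequality from \cite[Lemma 2.6]{Fall-Felli-2014}, where it is obtained directly with the structural coefficient $\tfrac{N-2s}{2r}$ (the form tied to the half-space trace Sobolev and Hardy-type inequalities). Your route instead assembles it from ingredients already available in Section \ref{sec-notation-preliminaries}: the local trace embedding $\Tr\colon H^1(B_1^+,t^{1-2s})\to H^s(B_1')$ of Proposition \ref{prop-traces} (which precedes the statement, so there is no circularity), the embedding $H^s(B_1')\hookrightarrow L^{2^*_s}(B_1')$ for the bounded Lipschitz domain $B_1'$, a weighted Poincar\'e inequality on $B_1^+$ proved by radial integration (your identity, Young's inequality and the vanishing of the boundary term at the origin since $N+2-2s>0$ all check out, as does the density step, legitimate because $H^1(B_1^+,t^{1-2s})$ is by definition a completion of smooth functions and both trace operators \eqref{trace-Omega}--\eqref{trace-Sr+} are continuous), and finally the dilation $v\mapsto v(r\cdot)$, under which all three terms scale by the common factor $r^{-(N-2s)}$, so the case $r=1$ suffices. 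The one point worth being explicit about is that your method does not produce the coefficient $\tfrac{N-2s}{2r}$ intrinsically: you get a boundary term with a different constant and then absorb the discrepancy into $\mathcal S_{N,s}$ via $\max\{1,2/(N-2s)\}$, which is perfectly legitimate here since the proposition only asserts the existence of some constant depending on $N$ and $s$; what your approach buys is a self-contained local proof (modulo the cited trace theorems), while the citation route inherits the sharper, Hardy-compatible form of the constant used elsewhere in the monotonicity argument.
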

The following inequality will be used to obtain estimates on the
Almgren frequency function.
\begin{proposition}
  Let $\omega_N$ be the $N$-dimensional Lebesgue measure of the unit
  ball in $\R^N$. For any $r >0$, $v \in H^1(B_r^+,t^{1-2s})$ and
  $f \in L^{\frac{N}{2s}+\e}(B_r')$ with $\e >0$, we have
  that
  \begin{equation}\label{ineq-found}
    \int_{B_r'} f |\Tr(v)|^2 \,
    dx \le \eta_f(r)\left(\int_{B_r^+}t^{1-2s} |\nabla v|^2\,
      dz+\frac{N-2s}{2r}\int_{S_r^+} t^{1-2s}v^2 \, dS\right),
\end{equation}
where 
\begin{equation}\label{eta-f}
  \eta_f(r):=\mathcal S_{N,s}\omega_N^{\frac{4 s^2\e}{N(N+2s\e)}}
  \norm{f}_{L^{\frac{N}{2s}+\e}(B_r')}r^{\frac{4s^2\e }{N+2s\e}}.
\end{equation}
\end{proposition}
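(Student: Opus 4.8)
The plan is to deduce \eqref{ineq-found} from the Sobolev-type trace inequality \eqref{ineq-soblev-trace} by interpolating the weight $f$ against the natural exponents. First I would apply H\"older's inequality on $B_r'$ with exponents $p=\frac{N}{2s}+\e$ and its conjugate $p'=\frac{p}{p-1}$ to the product $f\cdot|\Tr(v)|^2$, obtaining
\begin{equation*}
  \int_{B_r'} f\,|\Tr(v)|^2\,dx
  \le \norm{f}_{L^{\frac{N}{2s}+\e}(B_r')}
  \left(\int_{B_r'}|\Tr(v)|^{2p'}\,dx\right)^{1/p'}.
\end{equation*}
The key algebraic point is that $2p'<2^*_s$ strictly, so I would then use H\"older once more on the ball $B_r'$ to pass from the $L^{2p'}$-norm of $|\Tr(v)|^2$ to its $L^{2^*_s}$-norm, paying a factor which is a power of the measure $|B_r'|=\omega_N r^N$. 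A direct computation with the exponents $2s$, $\e$, $N$ should produce exactly the powers $\omega_N^{\frac{4s^2\e}{N(N+2s\e)}}$ and $r^{\frac{4s^2\e}{N+2s\e}}$ appearing in \eqref{eta-f}; this is the one genuinely bookkeeping-heavy step, but it is routine.

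More precisely, I would write $|\Tr(v)|^{2p'}=|\Tr(v)|^{2p'}\cdot 1$ and apply H\"older with exponents $\frac{2^*_s}{2p'}$ and its conjugate, getting
\begin{equation*}
  \left(\int_{B_r'}|\Tr(v)|^{2p'}\,dx\right)^{1/p'}
  \le |B_r'|^{\frac{1}{p'}-\frac{2}{2^*_s}}
  \left(\int_{B_r'}|\Tr(v)|^{2^*_s}\,dx\right)^{\frac{2}{2^*_s}}.
\end{equation*}
Then I would plug in \eqref{ineq-soblev-trace} to bound the last factor by $\mathcal S_{N,s}$ times the bracket $\bigl(\int_{B_r^+}t^{1-2s}|\nabla v|^2\,dz+\frac{N-2s}{2r}\int_{S_r^+}t^{1-2s}v^2\,dS\bigr)$. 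Combining the three displays gives \eqref{ineq-found} with constant $\mathcal S_{N,s}\,|B_r'|^{\frac1{p'}-\frac{2}{2^*_s}}\norm{f}_{L^{\frac{N}{2s}+\e}(B_r')}$, and it remains only to verify that the exponent of $|B_r'|=\omega_N r^N$ equals $\frac{4s^2\e}{N(N+2s\e)}$, which after substituting $p'=\frac{N+2s\e}{N-2s+2s\e}$ and $2^*_s=\frac{2N}{N-2s}$ is an elementary simplification; this splits $|B_r'|^{\alpha}=\omega_N^{\alpha}r^{N\alpha}$ into the two claimed factors.

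The main (and only) obstacle is purely computational: one must be careful that $2p'$ does indeed lie in $[2,2^*_s)$ so that both H\"older applications are legitimate — the lower bound $2p'\ge 2$ is clear since $p'\ge 1$, while the strict upper bound $2p'<2^*_s$ is equivalent to $p>\frac{N}{2s}$, i.e. to $\e>0$, which is exactly our hypothesis $f\in L^{\frac{N}{2s}+\e}(B_r')$ with $\e>0$. Once this is checked, the chain of inequalities is immediate and the identification of exponents is a short algebraic verification, so I expect the proof to be only a few lines once the exponent computation is carried out explicitly.
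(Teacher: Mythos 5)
Your proposal is correct and follows essentially the same route as the paper: the paper's proof is a single three-exponent H\"older inequality on $B_r'$ (with exponents $\tfrac{N}{2s}+\e$, $\tfrac{2^*_s}{2}$ and the conjugate remainder, producing the factor $\omega_N^{\frac{4s^2\e}{N(N+2s\e)}}r^{\frac{4s^2\e}{N+2s\e}}$) followed by \eqref{ineq-soblev-trace}, which is exactly what your two successive H\"older applications amount to. Your exponent bookkeeping, including $\frac1{p'}-\frac{2}{2^*_s}=\frac{4s^2\e}{N(N+2s\e)}$ and the check $2p'<2^*_s$, is accurate.
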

\begin{proof}
By the H\"older inequality 
\begin{equation*}
\int_{B_r'} f |\Tr(v)|^2 \, dx \le\norm{\Tr(v)}_{L^{2^*_s}(B_r')}^2\norm{f}_{L^{\frac{N}{2s}+\e}(B_r')}\omega_N^\frac{4s^2\e }{N(N+2s\e)} r^{\frac{4s^2\e }{N+2s\e}}.
\end{equation*}
Then \eqref{ineq-found} follows from \eqref{ineq-soblev-trace}.  
\end{proof}

We also recall the following Hardy-type inequality with boundary terms
 from \cite[Lemma 2.4]{Fall-Felli-2014}.
\begin{proposition}	\label{prop-Hardy}
For any $r >0$ and any $v \in H^1(B_r^+,t^{1-2s})$
\begin{equation}\label{ineq-Hardy}
\left(\frac{N-2s}{2}\right)^2\int_{B_r^+} t^{1-2s} \frac{|v(z)|^2}{|z|^2} \, dz \le 
\int_{B_r^+} t^{1-2s}\left(\nabla v\cdot \frac{z}{|z|}\right)^2 \, dz
+\left(\frac{N-2s}{2r}\right)\int_{S_r^+}t^{1-2s} v^2 \, dS.
\end{equation} 
\end{proposition}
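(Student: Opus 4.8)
The plan is to prove \eqref{ineq-Hardy} first for $v\in C^\infty(\overline{B_r^+})$ and then to pass to the general case by density, $C^\infty(\overline{B_r^+})$ being dense in $H^1(B_r^+,t^{1-2s})$ by definition of the latter space. The density step presents no difficulty: along a sequence converging in $H^1(B_r^+,t^{1-2s})$, the term $\int_{B_r^+}t^{1-2s}(\nabla v\cdot\frac{z}{|z|})^2\,dz$ is continuous, the boundary term $\int_{S_r^+}t^{1-2s}v^2\,dS$ is continuous because of the continuity of the trace operator \eqref{trace-Sr+}, and the left-hand side is handled by passing to an a.e.\ convergent subsequence and applying Fatou's lemma; in particular the inequality, once established for smooth functions, shows a posteriori that the weighted Hardy integral on the left is finite for every $v\in H^1(B_r^+,t^{1-2s})$.

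For $v\in C^\infty(\overline{B_r^+})$ I would derive an integral identity by integrating the divergence of the vector field $W:=t^{1-2s}\,|z|^{-2}\,z\,v^2$. Writing $t=z_{N+1}$ and setting $\alpha:=N-2s>0$, a direct computation (using $\dive(|z|^{-2}z)=(N-1)|z|^{-2}$ in $\R^{N+1}$ and $\partial_t(t^{1-2s})=(1-2s)t^{-2s}$) gives $\dive\big(t^{1-2s}|z|^{-2}z\big)=\alpha\,t^{1-2s}|z|^{-2}$, hence
\[
  \dive W=\alpha\,t^{1-2s}\frac{v^2}{|z|^2}+2\,t^{1-2s}\,\frac{v}{|z|}\,\Big(\nabla v\cdot\frac{z}{|z|}\Big).
\]
Integrating this over $B_r^+\setminus B_\e^+$ and applying the divergence theorem: on the spherical portion $S_r^+$ the outer unit normal is $z/|z|$ and $W\cdot\frac{z}{|z|}=\frac1r\,t^{1-2s}v^2$; on the flat face $\{t=0\}$ the normal is $-e_{N+1}$ and $W\cdot(-e_{N+1})=-t^{2-2s}|z|^{-2}v^2$ vanishes as $t\to0^+$ since $2-2s>0$; and on the inner sphere $S_\e^+$ the flux is bounded in absolute value by $C\,\|v\|_{L^\infty(B_r^+)}^2\,\e^{N-2s}$, which tends to $0$ as $\e\to0^+$ precisely because $N>2s$. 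Since $\dive W\in L^1(B_r^+)$ — the only delicate term being $t^{1-2s}|z|^{-2}v^2$, integrable exactly because $\alpha>0$ — letting $\e\to0^+$ yields
\[
  \alpha\int_{B_r^+}t^{1-2s}\frac{v^2}{|z|^2}\,dz
  =\frac1r\int_{S_r^+}t^{1-2s}v^2\,dS
  -2\int_{B_r^+}t^{1-2s}\,\frac{v}{|z|}\,\Big(\nabla v\cdot\frac{z}{|z|}\Big)\,dz.
\]

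To conclude, I would estimate the last integrand by Young's inequality, $-2\,\frac{v}{|z|}\big(\nabla v\cdot\frac{z}{|z|}\big)\le\frac{\alpha}{2}\,\frac{v^2}{|z|^2}+\frac{2}{\alpha}\big(\nabla v\cdot\frac{z}{|z|}\big)^2$, absorb the term $\frac{\alpha}{2}\int_{B_r^+}t^{1-2s}v^2|z|^{-2}\,dz$ into the left-hand side, and finally multiply the resulting inequality $\frac{\alpha}{2}\int_{B_r^+}t^{1-2s}\frac{v^2}{|z|^2}\,dz\le\frac1r\int_{S_r^+}t^{1-2s}v^2\,dS+\frac{2}{\alpha}\int_{B_r^+}t^{1-2s}(\nabla v\cdot\frac{z}{|z|})^2\,dz$ through by $\alpha/2$; recalling $\alpha=N-2s$, this is exactly \eqref{ineq-Hardy}. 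An equivalent alternative would be to pass to polar coordinates $z=\rho\theta$ and, using Fubini against the measure $\theta_{N+1}^{1-2s}\,dS$, reduce to the one-dimensional weighted Hardy inequality $(\frac{\alpha}{2})^2\int_0^r\psi^2\rho^{\alpha-1}\,d\rho\le\int_0^r(\psi')^2\rho^{\alpha+1}\,d\rho+\frac{\alpha}{2}\psi(r)^2r^{\alpha}$, which follows from expanding $\int_0^r\big(\psi'+\frac{\alpha}{2\rho}\psi\big)^2\rho^{\alpha+1}\,d\rho\ge0$ and integrating by parts. There is no real obstacle here: the only points needing (minor) care are the integrability near the origin of the weighted Hardy integrand — which is where the standing assumption $N>2s$ enters — and the vanishing of the boundary flux on the degenerate/singular face $\{t=0\}$, both dispatched by elementary estimates.
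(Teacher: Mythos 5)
Your proof is correct. Note that the paper does not prove Proposition \ref{prop-Hardy} at all: it simply recalls it from \cite[Lemma 2.4]{Fall-Felli-2014}, so there is no internal argument to compare against, and your proposal supplies a self-contained proof which is essentially the standard one behind that citation. The computation of $\dive\big(t^{1-2s}|z|^{-2}z\,v^2\big)$ is right (the weight contributes the extra $(1-2s)t^{1-2s}|z|^{-2}$, turning $N-1$ into $N-2s$), the flux through $S_\e^+$ is indeed $O(\e^{N-2s})$, and the Young/absorption step is legitimate because $\int_{B_r^+}t^{1-2s}|z|^{-2}v^2\,dz<\infty$ for bounded $v$ precisely when $N>2s$, which you point out; multiplying by $(N-2s)/2$ gives exactly \eqref{ineq-Hardy}. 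The density step also works: the radial-derivative term passes to the limit trivially, the boundary term via the continuity of the trace operator in \eqref{trace-Sr+}, and Fatou handles the left-hand side. Two small points of rigor, both harmless: for $s>\tfrac12$ the field $W$ is not continuous up to $\{t=0\}$, so the divergence theorem should be applied on $\{t>\delta\}\cap(B_r^+\setminus B_\e^+)$ with $\delta\to0^+$ taken first — your observation that the flat-face flux $-t^{2-2s}|z|^{-2}v^2$ vanishes as $t\to0^+$ is exactly what makes this limit work, but the statement is cleaner phrased as a limit of fluxes through $\{t=\delta\}$ than as a boundary integral over $\{t=0\}$; and the corner spheres $S_r'$, $S_\e'$ have zero surface measure, so they contribute nothing. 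Your alternative route through polar coordinates and the one-dimensional weighted Hardy inequality with boundary term is equally valid and in fact sidesteps the degenerate-face issue entirely.
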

The following Poincar\'{e}-type
  inequality directly follows from \eqref{ineq-Hardy}:
for all $r >0$ and $v \in 
H^1(B_r^+,t^{1-2s})$ 
\begin{equation}\label{eq-Poincare}
\int_{B_r^+} t^{1-2s} v^2 \, dz \le \frac{4r}{(N-2s)^2}\left(r\int_{B_r^+} t^{1-2s} |\nabla v|^2 \, dz +\frac{N-2s}2\int_{S_r^+}t^{1-2s} v^2 \, dS \right).	
\end{equation}

\begin{remark}\label{remarknormaequiv}
  As a consequence of \eqref{eq-Poincare}
  and by continuity of the trace operator \eqref{trace-Sr+}, for every
  $r>0$ we have that 
\begin{equation*}
  \left(\int_{S_r^+}t^{1-2s} v^2 \, dS  +\int_{B_r^+} t^{1-2s}
    |\nabla v|^2 \, dz\right)^{1/2}
\end{equation*}
is an equivalent norm on $H^1(B_r^+,t^{1-2s})$.
\end{remark}

\section{Straightening the boundary}\label{sec-straightening the boundary}
Let $x_0\in\partial\Omega$, $R>0$ and $g$ satisfy \eqref{hypo-g},
\eqref{hypo-g-boundary-Omega}, and \eqref{hypo-g-Omega}. Up to a
suitable choice of the coordinate system, it is not restrictive to
assume that
\begin{equation*}
x_0=0, \quad g(0)=0, \quad \nabla g(0)=0.
\end{equation*} 
We use the local diffeomorphism $F$ constructed in \cite[Section
2]{de2021strong} (see also \cite{adolfsson1997c1}) to straighten the
boundary of $\mathcal{C}_\Omega$ in a neighbourhood of $0$; for the
sake of clarity and completeness we summarize its properties in
Propositions \ref{diffeomorphism} and \ref{beta-properties} below,
referring to \cite[Section 2]{de2021strong} for their proofs. We
consider the variable $z=(y,t) \in \R^N \times[0,\infty)$ with
$y=(y',y_N)=(y_1,\cdots,y_N)$. For future reference we define
\begin{equation}\label{M-N}
  M_N:=
  \left(\begin{array}{c|c|c}\mathop{\rm Id}_{N-1} &0&0 \\
          \hline 0&-1&0\\\hline 0&0&1 \end{array}\right),\qquad 
  M_N':=
  \left(\begin{array}{c|c}\mathop{\rm Id}_{N-1} &0 \\
          \hline 0&-1 \end{array}\right),
    \end{equation}
where $\mathop{\rm Id}_{N-1}$ is the identity $(N-1)\times(N-1)$
matrix.

\begin{proposition}\cite[Section
  2]{de2021strong} \label{diffeomorphism} There exist
  $F=(F_1,\dots,F_{N+1})\in C^{1,1}(\R^{N+1}, \R^{N+1})$ and $r_0>0$
  such that $F\big|_{B_{r_0}}:B_{r_0} \to F(B_{r_0})$ is a
  diffeomorphism of class $C^{1,1}$,
\begin{align*}
  &F(y',0,0)=(y',g(y'),0) \quad \text{for all } y' \in \R^{N-1},\\
  &F_N(y',y_N,t)=y_N+g(y') \quad
    \text{for all } (y',y_N,t) \in \R^{N-1}\times \R\times \R,\\
  &F_{N+1}( y,t)=t, \quad \text{for all } (y,t)
    \in \R^N\times \R,\\
  &\alpha(y,t):=\mathop{\rm det}J_F(y,t)>0\quad\text{in }B_{r_0},
\end{align*}
and
\begin{align}\label{F-propeties-4}
  &F(\{(y',y_N,t)\in B_{r_0}^+:y_N=0\})=\partial_L\mathcal{C}_{\Omega}
  \cap F(B_{r_0}^+),\\
  &F(\{(y',y_N,t)\in B_{r_0}^+:y_N<0\})=\mathcal C_\Omega\cap F(B_{r_0}^+),
    \label{eq:4}
\end{align}
where $\partial_L\mathcal{C}_{\Omega}$ is defined in \eqref{C+} and
$J_F(y,t)$ is the Jacobian matrix of $F$.
Furthermore the following properties hold:
\begin{enumerate}[\rm i)]\setlength\itemsep{10pt}
\item $J_F$ depends only on the variable $y$ and
\begin{equation*}
J_F(y',y_{N})=J_F(y)={\mathop{\rm Id}}_{N+1}+O(|y|) \quad
\text{as } |y|\to 0^+,
\end{equation*}
where $\mathop{\rm Id}_{{N+1}}$ denotes the identity
$(N+1)\times (N+1)$ matrix and $O(|y|)$ denotes a matrix with all
entries being $O(|y|)$ as $|y|\to 0^+$;
\item $\alpha(y)=\det{ J_F}(y) =1+O(|y'|^2)+O(y_{N})$ as
  $|y'| \to 0^+$ and $y_{N}\to0$;
\item $\pd{F_i}{t}=\pd{F_{N+1}}{y_i}=0$ for any $i=1,\dots, N$ and
  $\pd{F_{N+1}}{t}=1$.
\end{enumerate}
\end{proposition}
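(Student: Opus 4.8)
The plan is to straighten $\partial_L\mathcal{C}_\Omega$ near the origin by an explicit change of variables that subtracts the graph $g$ from the $N$-th coordinate and leaves every other coordinate --- most importantly the degeneracy variable $t$ --- untouched, and then to read all the stated properties directly off its Jacobian. Concretely I would take
\begin{equation*}
  F(y',y_N,t):=\bigl(y',\,y_N+g(y'),\,t\bigr),\qquad (y',y_N,t)\in\R^{N-1}\times\R\times\R,
\end{equation*}
which is the model case underlying the construction in \cite{adolfsson1997c1,de2021strong}; the map actually used there is a minor variant of it, still with $F_N=y_N+g(y')$ and $F_{N+1}=t$, tuned so as to yield in addition the refined information on the pulled-back coefficient matrix collected in Proposition~\ref{beta-properties}. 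The first thing to record is that, since $g\in C^{1,1}(\R^{N-1},\R)$ with $g(0)=0$ and $\nabla g(0)=0$,
\begin{equation*}
  g(y')=O(|y'|^2)\quad\text{and}\quad\nabla g(y')=O(|y'|)\qquad\text{as }|y'|\to0^+,
\end{equation*}
and that $F\in C^{1,1}(\R^{N+1},\R^{N+1})$ --- because $g$ is globally $C^{1,1}$ --- and is globally invertible, with inverse $(x',x_N,t)\mapsto(x',x_N-g(x'),t)$.

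The next step is to check that $F$ is a $C^{1,1}$ diffeomorphism near $0$ and to fix $r_0$. A direct computation gives that $J_F$ is independent of $t$ and has the block form
\begin{equation*}
  J_F(y)=\left(\begin{array}{c|c|c}\mathrm{Id}_{N-1}&0&0\\\hline\nabla g(y')&1&0\\\hline 0&0&1\end{array}\right),
\end{equation*}
so that $\alpha(y)=\det J_F(y)\equiv1>0$ and $J_F(y)=\mathrm{Id}_{N+1}+O(|y|)$ as $|y|\to0^+$, the only nonzero entries of $J_F(y)-\mathrm{Id}_{N+1}$ being those of $\nabla g(y')=O(|y'|)$, which sit in the $N$-th row. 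In particular $J_F(0)=\mathrm{Id}_{N+1}$, so by the inverse function theorem $F$ restricts to a $C^{1,1}$ diffeomorphism of a small ball around $0$; I would then shrink the radius to an $r_0\in(0,R)$ for which $F|_{B_{r_0}}$ is moreover injective and $F(B_{r_0})\subseteq B'_R(0)\times\R$, so that on $F(B_{r_0})$ the boundary $\partial\Omega$ coincides with the graph $\{x_N=g(x')\}$ by \eqref{hypo-g-boundary-Omega}--\eqref{hypo-g-Omega}. Properties i)--iii) then follow at once: i) is the block computation just made; iii) holds because $F_1,\dots,F_N$ do not depend on $t$ and $F_{N+1}=t$; and ii) is immediate for the model map ($\alpha\equiv1=1+O(|y'|^2)+O(y_N)$), while for the variant of \cite{adolfsson1997c1,de2021strong} it is one of the properties the construction is built to produce, via a short Taylor expansion of $\alpha$ at $0$ whose $y'$-linear part cancels.

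It remains to verify the boundary identities \eqref{F-propeties-4} and \eqref{eq:4}. Here I would argue: on $\{y_N=0\}$ one has $F(y',0,t)=(y',g(y'),t)$, which for $(y',0,t)\in B_{r_0}^+$ lies in $\partial\Omega\times(0,+\infty)\subseteq\partial_L\mathcal{C}_\Omega$ by \eqref{hypo-g-boundary-Omega} and \eqref{C+}; on $\{y_N<0\}$ one has $F_N=y_N+g(y')<g(y')$, so $F(y',y_N,t)\in\{x_N<g(x')\}\times(0,+\infty)=\mathcal{C}_\Omega$ near $0$ by \eqref{hypo-g-Omega}; and the reverse inclusions follow because $F$ is a diffeomorphism of $B_{r_0}$ whose inverse preserves both the sign of $x_N-g(x')$ and the sign of $t$. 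I do not expect any single hard estimate in all of this; rather, the genuinely delicate point is the \emph{simultaneous} compatibility of the requirements --- flattening $\partial_L\mathcal{C}_\Omega$ while keeping $F_{N+1}=t$ so that the weight $t^{1-2s}$ is preserved verbatim by the pullback, and keeping $J_F(0)=\mathrm{Id}_{N+1}$ with $\det J_F>0$ so that the transported equation is a controlled perturbation of $\dive(t^{1-2s}\nabla\,\cdot\,)$, together with, for the version actually needed in the sequel, the sharper coefficient estimates of Proposition~\ref{beta-properties}. It is this last demand that makes the construction in \cite{adolfsson1997c1,de2021strong} more than the bare shear above, and where most of the bookkeeping sits; for the properties recorded in Proposition~\ref{diffeomorphism} the verification is the routine one just outlined.
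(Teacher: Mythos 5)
Read purely as the existence statement it is, your verification is essentially correct: the plain shear $F(y',y_N,t)=(y',\,y_N+g(y'),\,t)$ is a global $C^{1,1}$ diffeomorphism, $J_F$ depends only on $y$ and equals $\mathrm{Id}_{N+1}+O(|y|)$, $\alpha\equiv 1$, and your argument for \eqref{F-propeties-4} and \eqref{eq:4} (using $F_{N+1}=t$, the sign of $x_N-g(x')$, and $r_0$ small enough that the graph description \eqref{hypo-g-boundary-Omega}--\eqref{hypo-g-Omega} applies on $F(B_{r_0})$) goes through. Note that the paper itself gives no proof here: Proposition \ref{diffeomorphism} is a record of properties of the particular map built in \cite[Section 2]{de2021strong} after \cite{adolfsson1997c1}, so an explicit check is in principle welcome.

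The genuine issue is that the map you actually verify cannot be the $F$ of this proposition as it is used in the sequel, and the properties are never established for the map that can. For your shear, $A=(J_F)^{-1}(J_F^{-1})^{T}\det J_F$ has entries $a_{jN}(y',y_N)=-\partial_{j}g(y')$ for $j=1,\dots,N-1$; these are $O(|y'|)$ but do \emph{not} vanish on $\{y_N=0\}$, so \eqref{matrix-D} and \eqref{eq:entries-jN} fail, and with them the Lipschitz continuity of the odd reflection $\widetilde A$ in \eqref{eq:defA} and the structure on which Sections \ref{sec-straightening the boundary}--\ref{sec-blow-up-analysis} rely. The construction of \cite{de2021strong} is forced to take $\partial_{y_N}F_j(y',0)=-\partial_j g(y')$ in the tangential components (so that the $y_N$-direction is mapped onto the normal $(-\nabla g(y'),1,0)$ of the graph and $J_F^{T}J_F$ becomes block diagonal on $\{y_N=0\}$), and this has two consequences you pass over: first, property ii) is then no longer the trivial $\alpha\equiv 1$ but the computation $\det J_F(y',0)=1+|\nabla g(y')|^2=1+O(|y'|^2)$ together with Lipschitz dependence on $y_N$ for the $O(y_N)$ term, rather than a vague ``Taylor expansion whose $y'$-linear part cancels''; second, since $g$ is only $C^{1,1}$, the naive correction $-y_N\nabla g(y')$ has Jacobian containing $y_N D^2g(y')$, which is in general neither continuous nor Lipschitz, so keeping $F\in C^{1,1}$ requires the regularization of $\nabla g$ that is the actual technical content of \cite{adolfsson1997c1,de2021strong}. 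So your proposal proves the literal statement for a different, simpler map, but leaves unproved exactly the version of the statement (and the attendant structure \eqref{matrix-A}--\eqref{matrix-D}) that the paper needs; calling that step a ``minor variant'' with a ``routine'' verification is where the gap sits.
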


\smallskip\noindent For every $r>0$, let  
\begin{equation}\label{eq:calQ}
\mathcal Q_r:= \{(y',y_N,t)\in B_r^+:y_N<0\},
\end{equation}
so that $F(\mathcal Q_{r_0})=\mathcal C_\Omega\cap F(B_{r_0}^+)$ in
view of \eqref{eq:4}. If $U \in H_{0,L}^1(\mathcal{C}_\Omega,t^{1-2s})$
solves
\eqref{prob-extension}, then the function
\begin{equation}\label{eq:1defW}
  W=U \circ F\in H^1(\mathcal Q_{r_0},t^{1-2s})
\end{equation}
is a weak solution to
\begin{equation}\label{prob-extension-B-}
\begin{cases}
\dive(t^{1-2s}A \nabla W)=0, &\text{in } \mathcal Q_{r_0}, \\
-\lim_{t \to 0^+} t^{1-2s}\alpha \pd{W}{t}= \kappa_{s,N}\bar{h}W,
& \text{on } \mathcal Q'_{r_0},
\end{cases}	
\end{equation}
where $\mathcal Q'_{r}:= \{(y',y_N)\in B'_r :y_N<0\}$ for all $r>0$,
$A=A(y)$ is the $(N+1)\times(N+1)$ matrix-valued function given
by 
\begin{equation*}
  A(y):= (J_F(y))^{-1}(J_F(y)^{-1})^T |\mathrm{det}J_F(y)|,
\end{equation*}
and
\begin{equation}\label{eq:df-h-bar}
  \bar{h}(y)= \alpha(y) h(F(y,0)).
\end{equation}
As observed in \cite[Section 2]{de2021strong}, $A$ has $C^{0,1}$
entries $\big(a_{ij}\big)_{i,j=1}^{N+1}$ and can be written as
\begin{equation}
\label{matrix-A}
A(y)=A(y',y_N)=
\left(\begin{array}{c|c}D(y',y_N) &0 \\ \hline 0&\alpha(y',y_N)\end{array}\right),
\end{equation}
with
\begin{equation}\label{matrix-D}
D(y',y_{N})=
\left(\begin{array}{c|c}\mathop{\rm Id}_{N-1}+O(|y'|^2)+O(y_{N})
        &O(y_{N})
        \\ \hline O(y_{N})&1+O(|y'|^2)+O(y_{N})\end{array}\right),
\end{equation}
where $\mathop{\rm Id}_{N-1}$ is the identity $(N-1) \times (N-1)$
matrix, $O(y_{N})$ and $O(|y'|^2)$ denote blocks of matrices with all
elements being $O(y_{N})$ as $y_{N}\to 0$ and $O(|y'|^2)$ as
$|y'|\to 0$ respectively. In particular, in view of
  \eqref{matrix-A}-\eqref{matrix-D} we have that
  \begin{equation}\label{eq:entries-jN}
    a_{Nj}(y',0)=a_{jN}(y',0)=0\quad\text{for all }j=1,\dots,N-1.
  \end{equation}
Having in mind to reflect our problem through the hyperplane $y_N=0$,
we define
\begin{align}\label{eq:defA}
&\widetilde A(y',y_N):= 
\begin{cases}
A(y',y_N), &\text{if } y_N \le 0, \\
M_N A(y',-y_N) M_N, &\text{if } y_N > 0,
\end{cases}\\
  \label{eq:defD}
&\widetilde D(y',y_N):= 
\begin{cases}
D(y',y_N), &\text{if } y_N \le 0, \\
M_N' D(y',-y_N) M_N', &\text{if } y_N > 0,
\end{cases}
\end{align}
with $M_N,M_N'$ as in \eqref{M-N}, and
\begin{equation}\label{alpha}
\widetilde \alpha(y',y_N):=
\begin{cases}
\alpha(y',y_N), &\text{if } y_N\le 0,\\
\alpha(y',-y_N), &\text{if } y_N> 0,
\end{cases} 
\end{equation}
where $\alpha(y)=\det{ J_F}(y)$. We observe that the Lipschitz
  continuity of $A$ and \eqref{eq:entries-jN} imply that
  the entries of $\widetilde A$ are of class $C^{0,1}$. Furthermore,
$\widetilde A$ is symmetric and, possibly choosing $r_0$ smaller from
the beginning,
\begin{equation}
  \|\widetilde A(y)\|_{\mathcal{L}(\R^{N+1},\R^{N+1})} \le 2
  \quad \text{and}\quad \frac12 |z|^2 \le \widetilde A(y)z \cdot z
  \le 2 |z|^2 \quad \text{for all } z \in \R^{N+1},
  \ y \in \overline{B'_{r_0}}, \label{ellipticity}
\end{equation}
where $\norm{\cdot}_{\mathcal{L}(\R^{N+1},\R^{N+1})}$ denotes the
operator norm  on the space of bounded linear operators from
$\R^{N+1}$ into itself.
We also observe that \eqref{matrix-A}-\eqref{matrix-D} imply the expansion
\begin{equation}\label{A-O}
  \widetilde A(y)=\mathop{\rm Id}\nolimits_{N+1}+O(|y|) \quad \text{as } |y| \to 0^+.
\end{equation}
Letting $\widetilde A$ and $\widetilde D$ be as in
\eqref{eq:defA}-\eqref{eq:defD}, we define
\begin{equation}\label{mu-beta}
  \mu(z):=\frac{\widetilde A(y)z \cdot z}{|z|^2} \quad\text{and}\quad
  \beta(z):=\frac{\widetilde A(y)z }{\mu(z)}
\quad\text{for every $z=(y,t)\in
  \overline{B^+_{r_0}}\setminus \{0\}$},
\end{equation}
and
\begin{equation}\label{beta'}
\beta'(y):=\frac{\widetilde D(y)y}{\mu(y,0)}\quad\text{for every $y\in
  \overline{B_{r_0}'}$}.
  \end{equation}
For every $z=(z_1,\dots,z_{N+1}) \in \R^{N+1}$ and
$y \in \overline{B'_{r_0}}$, $d\widetilde A(y)zz$ is defined  as the vector of
$\R^{N+1}$
with $i$-th component given by
\begin{equation}\label{dA}
  (d\widetilde A(y)zz)_i=\sum_{h,k=1}^{N+1}\pd{\widetilde a_{kh}}{z_i}(y)z_h z_k,
  \quad i=1,\cdots,N+1,
\end{equation}
where $(\widetilde a_{k,h})_{k,h=1}^{N+1}$ are the entries of the matrix $\widetilde A=$ in \eqref{eq:defA}.

\begin{proposition} \label{beta-properties}
Let $\mu$, $\beta$, and $\beta'$ be as in \eqref{mu-beta}-\eqref{beta'}.
Then, possibly choosing $r_0$ smaller from the beginning, we have that 
\begin{align}
  &\frac{1}{2}\le\mu(z) \le 2 \quad \text{for any } z\in
    \overline{B^+_{r_0}}\setminus \{0\}, \label{mu-estimates} \\
&\mu(z)=1 +O(|z|),\quad  \nabla \mu(z)=O(1)\quad \text{as } |z| \to 0^+. \label{mu-O}
\end{align} 
Moreover $\beta$ and $\beta'$ are well-defined and
\begin{align}
  &\beta(z)=z +O(|z|^2)=O(|z|) \quad \text{as } |z| \to 0^+,\label{beta-estimate}\\ 
  &J_{\beta}(z)=\widetilde A(y)+O(|z|)=\mathop{\rm Id}\nolimits_{N+1}+O(|z|), \quad
\mathop{\rm div}(\beta)(z)=N+1+O(|z|)\quad
    \text{as }
    |z| \to 0^+, \label{jacob-beta-estimates}\\
  &\beta'(y)=y +O(|y|^2)=O(|y|),\quad \mathop{\rm div}(\beta')(y)=N+O(|y|)  \quad \text{as } |y| \to 0^+,\label{beta'-estimate}.
\end{align}

\end{proposition}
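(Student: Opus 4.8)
The plan is to derive every assertion from two facts already in hand: the uniform ellipticity \eqref{ellipticity} and the expansion \eqref{A-O}, namely $\widetilde A(y)=\mathrm{Id}_{N+1}+O(|y|)$, whose error term moreover has first-order derivatives bounded a.e.\ because $\widetilde A$ has $C^{0,1}$ entries. Since $|y|\le|z|$ for $z=(y,t)\in\overline{B^+_{r_0}}$, each $O(|y|)$ below may be read as $O(|z|)$. First, dividing \eqref{ellipticity} by $|z|^2$ gives \eqref{mu-estimates} at once; in particular $\mu$ is bounded away from zero, so the quotients in \eqref{mu-beta}--\eqref{beta'} are well defined for $z\neq0$ (resp.\ $y\neq0$) and extend continuously by $0$ at the origin. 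Writing $\widetilde A(y)=\mathrm{Id}_{N+1}+B(y)$ with $B(y)=O(|z|)$, one has $\mu(z)=1+B(y)\tfrac{z}{|z|}\cdot\tfrac{z}{|z|}=1+O(|z|)$, which is the first part of \eqref{mu-O}.

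For the gradient bound in \eqref{mu-O} I would differentiate $\mu(z)-1=\sum_{i,j}b_{ij}(y)\tfrac{z_iz_j}{|z|^2}$: the terms in which $\partial_{z_k}$ falls on $b_{ij}$ are bounded, being a bounded coefficient times the bounded function $z_iz_j/|z|^2$, whereas in the terms where $\partial_{z_k}$ falls on $z_iz_j/|z|^2$ the factor $b_{ij}(y)=O(|z|)$ multiplies a factor of size $O(|z|^{-1})$, hence is bounded too. This is the one step calling for some care: $\nabla\mu$ stays bounded near the origin even though the degree-zero-homogeneous part of $\mu$ is merely Lipschitz; apart from this, the whole proof is routine Taylor expansion.

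Next, for $\beta$: from $\widetilde A(y)z=z+B(y)z=z+O(|z|^2)$ and $\mu(z)^{-1}=1+O(|z|)$ I would read off $\beta(z)=z+O(|z|^2)=O(|z|)$, i.e.\ \eqref{beta-estimate}. Since $\beta$ is a ratio of Lipschitz functions with denominator bounded below, it is Lipschitz and $J_\beta$ exists a.e., with $J_\beta=\mu^{-1}J_{z\mapsto\widetilde A(y)z}-\mu^{-2}(\widetilde A(y)z)\otimes\nabla\mu$. The $(i,k)$ entry of $J_{z\mapsto\widetilde A(y)z}$ equals $\widetilde a_{ik}(y)+\sum_j(\partial_{z_k}\widetilde a_{ij})(y)z_j=\widetilde a_{ik}(y)+O(|z|)$, so the first summand is $\mu^{-1}(\widetilde A(y)+O(|z|))=\widetilde A(y)+O(|z|)$, while the second summand is $O(|z|)$ since $\widetilde A(y)z=O(|z|)$ and $\nabla\mu$, $\mu^{-2}$ are $O(1)$. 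Using \eqref{A-O} once more, $J_\beta(z)=\widetilde A(y)+O(|z|)=\mathrm{Id}_{N+1}+O(|z|)$; taking the sum of the diagonal entries gives $\dive(\beta)(z)=N+1+O(|z|)$, which is \eqref{jacob-beta-estimates}.

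Finally, for $\beta'$ I would use the block-diagonal structure $\widetilde A(y)=\operatorname{diag}\bigl(\widetilde D(y),\widetilde\alpha(y)\bigr)$, which follows from \eqref{matrix-A}, \eqref{eq:entries-jN} and the definitions \eqref{eq:defA}--\eqref{eq:defD}; it gives $\widetilde A(y)(y,0)=(\widetilde D(y)y,0)$, hence $\mu(y,0)=\widetilde D(y)y\cdot y/|y|^2\in[\tfrac12,2]$ by \eqref{mu-estimates} and $\beta(y,0)=(\beta'(y),0)$, so in particular $\beta'$ is well defined. Since \eqref{matrix-D} yields $\widetilde D(y)=\mathrm{Id}_N+O(|y|)$ with $C^{0,1}$ entries, the computations of the previous paragraph, repeated verbatim in $\R^N$ with $\widetilde D$ in place of $\widetilde A$, give $\beta'(y)=y+O(|y|^2)=O(|y|)$, $J_{\beta'}(y)=\widetilde D(y)+O(|y|)=\mathrm{Id}_N+O(|y|)$, and $\dive(\beta')(y)=N+O(|y|)$, i.e.\ \eqref{beta'-estimate}. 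The repeated shrinking of $r_0$ is needed only so that \eqref{ellipticity} holds and all the $O$-terms lie in a regime where they are genuinely small.
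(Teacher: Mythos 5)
Your proof is correct and, in substance, is the same argument the paper relies on: the paper deduces \eqref{mu-estimates} from \eqref{ellipticity}, outsources \eqref{mu-O}, \eqref{beta-estimate} and \eqref{jacob-beta-estimates} to \cite[Lemmas 2.1 and 2.2]{de2021strong}, and obtains \eqref{beta'-estimate} exactly as you do, by observing that the block structure of $\widetilde A$ makes $\beta'$ the first $N$ components of $\beta$ on $\{t=0\}$. Your write-up simply fills in the computations hidden behind the citation, and the one delicate point -- the boundedness of $\nabla\mu$ (a.e., by Rademacher, since the entries of $\widetilde A$ are only $C^{0,1}$), obtained by pairing $B(y)=O(|z|)$ against the $O(|z|^{-1})$ derivative of the degree-zero homogeneous factor $z_iz_j/|z|^2$ -- is handled correctly, as is the analogous cancellation in the second term of $J_\beta=\mu^{-1}J_{z\mapsto\widetilde A(y)z}-\mu^{-2}(\widetilde A(y)z)\otimes\nabla\mu$.
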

\begin{proof}
  \eqref{mu-estimates} easily follows from \eqref{ellipticity}. We
  refer to \cite[Lemma 2.1]{de2021strong} for the proof of
  \eqref{mu-O}. As a direct consequence, $\beta$ and $\beta'$ are
  well-defined.  From \eqref{beta-estimate} and
  \eqref{jacob-beta-estimates}, whose proof is contained in
  \cite[Lemma 2.2]{de2021strong}, we derive \eqref{beta'-estimate},
  after noting that $\beta'$ coincides with the first
  $N$-components of the vector $\beta$.
\end{proof}

\begin{remark}
 From the Lipschitz continuity of $\widetilde A$ observed above and Proposition
 \ref{beta-properties} we have that
\begin{align}\label{everything-bounded}
  &\widetilde A \in C^{0,1}(\overline {B^+_{r_0}}, \R^{(N+1)^2}),  \
    \mu  \in C^{0,1}(\overline {B^+_{r_0}}), \ 
    \frac{1}{\mu}  \in C^{0,1}(\overline {B^+_{r_0}}), \
    \beta \in   C^{0,1}(\overline {B^+_{r_0}},\R^{N+1}) \\
  &J_\beta \in L^\infty(\overline {B^+_{r_0}}, \R^{(N+1)^2}),  \  \dive(\beta)
    \in
    L^\infty(\overline
    {B^+_{r_0}}),
    \  \beta' \in L^\infty(\overline {B_{r_0}'},\R^N), 
    \   \dive(\beta') \in  L^\infty(\overline {B'_{r_0}}).\notag 
\end{align} 
\end{remark}

\begin{remark}\label{null-hyperplane}
  If $v\in H^1_{0,L}(\mathcal{C}_\Omega ,t^{1-2s})$, then
  $\restr{(v \circ F)}{\mathcal Q_{r_0}} \in H^1(\mathcal
    Q_{r_0},t^{1-2s}) $ by Proposition \ref{diffeomorphism}, and
\begin{equation}\label{trace-hyperplane}
  (v\circ F)(z)=0 \quad \text{ for any } z\in
  \{(y',y_N,t)\in B_{r_0}^+:y_N=0\}
\end{equation}
in view of \eqref{F-propeties-4}.  Equality \eqref{trace-hyperplane} is
meant in the sense of the classical theory of  traces for Sobolev spaces;
this is possible thanks to the
fact that $H^1(E,t^{1-2s}) \subset W^{1,1}(E)$ for any bounded open set
$E \subseteq \R^{N}\times (0,\infty)$.
\end{remark}

If $W$ is a solution to \eqref{prob-extension-B-}, let $\widetilde{W}$
be defined as follows  
\begin{align}
\widetilde{W}(y',y_N,t):=&
\begin{cases}
W (y',y_N,t) ,	&\text{ if } (y',y_N,t) \in \mathcal Q_{r_0},\\ 
-W (y',-y_N,t) ,	&\text{ if } (y',y_N,t) \in B_{r_0}^+\text{ and } y_N >0.
\end{cases} \label{W}
\end{align}
For the sake of convenience we will still denote $\widetilde{W}$ with
$W$.  Letting $\bar{h}$ be defined in \eqref{eq:df-h-bar}, we also
consider the following function
 \begin{equation}\label{tilde-h}
 \widetilde{h}(y',y_N):=\begin{cases}
	\bar{h}(y',y_N) ,	&\text{ if } (y',y_N) \in \mathcal Q_{r_0}',\\ 
	\bar{h} (y',-y_N) ,	&\text{ if } (y',y_N) \in B_{r_0}',\text{ and } y_N >0.
\end{cases}
 \end{equation}
 It is easy to verify that $W \in H^1(B_{r_0}^+,t^{1-2s})$ thanks to
 Remark \ref{null-hyperplane} and
\begin{equation}\label{htildedovesta}
\widetilde{h} \in W^{1, \frac{N}{2s}+\e}(B_{r_0}')
\end{equation}
thanks to \eqref{hypo-h}, \eqref{eq:df-h-bar} and Proposition \ref{diffeomorphism}.
Furthermore $W$ weakly solves 
\begin{equation}\label{prob-extension-straith}
\begin{cases}
  \dive(t^{1-2s} \widetilde A\nabla W)=0, &\text{ on }B_{r_0}^+, \\
  -\lim_{t \to 0^+} t^{1-2s} \widetilde \alpha \pd{W}{t}=
  \kappa_{s,N}\widetilde h \Tr(W), & \text{ on } B_{r_0}',
\end{cases}		
\end{equation} 
with $\widetilde \alpha$  defined in  \eqref{alpha}, $\widetilde{h}$
in \eqref{tilde-h} and $\widetilde  A$ in \eqref{eq:defA}, namely 
\begin{equation}\label{eq-extension-straith}
	\int_{B_{r_0}^+} t^{1-2s} \widetilde A \nabla W \cdot \nabla
        \phi \, dz
        =\kappa_{s,N}\int_{B_{r_0}'}\widetilde h \Tr(W)\Tr(\phi) \, dy
        \quad \text{for all }\phi \in  H_{0,S^+_{r_0}}^1(B_{r_1}^+,t^{1-2s}).
\end{equation}
Thanks to Proposition \ref{prop-traces}, \eqref{htildedovesta} and
the H\"older inequality, the second member of \eqref{eq-extension-straith}
is well-defined.

\begin{remark}\label{rem:regularity}
  In \cite[Theorem 2.1]{fellisiclari} it is proved that, if
  $W \in H^1(B_{r_0}^+,t^{1-2s})$ is a weak solution to
  \eqref{eq-extension-straith} with $\widetilde A$ and $\widetilde h$
  satisfying \eqref{matrix-A}, \eqref{eq:defA},
  \eqref{everything-bounded}, \eqref{mu-estimates},
  \eqref{htildedovesta}, then
  \begin{equation}\label{eq:regularity}
\nabla_x W \in H^1(B_r^+,t^{1-2s})\quad\text{and}\quad
t^{1-2s}\frac{\partial W}{\partial t}\in H^1(B_r^+,t^{2s-1})
  \end{equation}
  for all $r \in (0,r_0)$. Furthermore
  \begin{equation*}
  \|\nabla_x W\|_{H^1(B_{r}^+,t^{1-2s})}+
  \left\|t^{1-2s}\frac{\partial W}{\partial t}\right\|_{H^1(B_{r}^+,t^{2s-1})}
  \leq C
    \norm{W}_{H^1(B^+_{r_0},t^{1-2s})}
  \end{equation*}
for a positive constant $C>0$ depending only on $N$, $s$, $r$, $r_0$,
$\|\widetilde h\|_{W^{1,\frac{N}{2s}}(B'_{r_0})}$, 
$\|\widetilde A\|_{W^{1,\infty}(B_{r_0}^+,\R^{(N+1)^2})}$ (but independent of $W$).  
\end{remark}

\begin{remark}\label{coarea-well-defined}
  If $W \in H^1(B_{r_0}^+,t^{1-2s})$ is a weak solution to
  \eqref{eq-extension-straith}, the regularity result
  \eqref{eq:regularity} and \eqref{trace-Sr+} ensure that, for all
  $\phi\in H^1(B_{r_0}^+,t^{1-2s})$ and 
  $r\in(0,r_0)$,
  $t^{1-2s}\Tr_1(\widetilde D\nabla_xW\cdot x) \Tr_1\phi \in
  L^1(S_r^+)$; moreover the function
  \begin{equation*}
    r\mapsto \int_{S_r^+}t^{1-2s}(\widetilde D\nabla_xW\cdot x) \phi \,dS
  \end{equation*}
  is continuous in $(0,r_0)$. Furthermore, since
  $t^{1-2s}\frac{\partial W}{\partial t}\in H^1(B_r^+,t^{2s-1})$ for
  all $r\in(0,r_0)$ by \eqref{eq:regularity}, we also have that,  
   for all
  $\phi\in H^1(B_{r_0}^+,t^{1-2s})$ and 
  $r\in(0,r_0)$,
  $t^{1-2s}\widetilde \alpha \frac{\partial W}{\partial t} t \phi\in
  W^{1,1}(B_r^+)$, so that $\Tr_1(t^{1-2s}\widetilde \alpha
  \frac{\partial W}{\partial t} t \phi) \in
  L^1(S_r^+)$; moreover the function
  \begin{equation*}
    r\mapsto \int_{S_r^+}t^{1-2s}\widetilde \alpha
    \frac{\partial W}{\partial t} t \phi \,dS
  \end{equation*}
  is continuous in $(0,r_0)$. We conclude that,   for all
  $\phi\in H^1(B_{r_0}^+,t^{1-2s})$, the function 
  \begin{equation*}
    t^{1-2s}   (\widetilde A\nabla W\cdot z)\phi
    =t^{1-2s}(\widetilde D\nabla_xW\cdot x) \phi+
    t^{1-2s}\widetilde \alpha \frac{\partial W}{\partial t} t \phi
  \end{equation*}
  has a trace on $S_r^+$ for all  $r\in(0,r_0)$ and the function
  \begin{equation*}
    r\mapsto \int_{S_r^+}t^{1-2s}
    (\widetilde A\nabla W\cdot z)\phi\,dS 
  \end{equation*}
  is continuous in $(0,r_0)$.  
\end{remark}

The following result provides an integration by parts formula which
will be useful in Section \ref{sec-blow-up-analysis}. 
\begin{proposition}\label{prop-forula-div}
Let $W$ be a weak solution
  to \eqref{prob-extension-straith}. For all $r \in(0,r_0)$ and
    $\phi \in H^1(B_{r_0}^+,t^{1-2s})$
	\begin{equation}\label{formula-div}
          \int_{B_r^+ }t^{1-2s} \widetilde A \nabla W \cdot
          \nabla \phi
          \, dz =\frac{1}{r}\int_{S_r^+} t^{1-2s} (\widetilde A \nabla
          W \cdot z )
          \phi \, dS + \kappa_{s,N}\int_{B_r'}
          \widetilde h \Tr(W) \Tr(\phi) \, dx.
	\end{equation} 
\end{proposition}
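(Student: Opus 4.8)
The plan is to localise the weak formulation \eqref{eq-extension-straith} to the half-ball $B_r^+$ by inserting radial cutoff functions. Fix $r\in(0,r_0)$ and $\phi\in H^1(B_{r_0}^+,t^{1-2s})$. For $\delta>0$ with $r+\delta<r_0$, I would take a Lipschitz radial cutoff $\eta_\delta(z)=\zeta_\delta(|z|)$, with $\zeta_\delta\equiv1$ on $[0,r]$, $\zeta_\delta\equiv0$ on $[r+\delta,+\infty)$ and $\zeta_\delta'\equiv-\tfrac1\delta$ on $(r,r+\delta)$. Since $\eta_\delta$ is Lipschitz and vanishes in a neighbourhood of $S^+_{r_0}$, the product $\eta_\delta\phi$ belongs to $H^1_{0,S^+_{r_0}}(B^+_{r_0},t^{1-2s})$ (approximate $\phi$ by functions in $C^\infty(\overline{B_{r_0}^+})$ and multiply by $\eta_\delta$), hence it is an admissible test function in \eqref{eq-extension-straith}.

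Plugging $\eta_\delta\phi$ into \eqref{eq-extension-straith} and using $\nabla(\eta_\delta\phi)=\eta_\delta\nabla\phi+\phi\nabla\eta_\delta$ together with $\nabla\eta_\delta(z)=\zeta_\delta'(|z|)\tfrac{z}{|z|}$, one obtains
\[
\int_{B_{r_0}^+}\!\! t^{1-2s}\eta_\delta\,\widetilde A\nabla W\cdot\nabla\phi\,dz
+\int_{B_{r_0}^+}\!\! t^{1-2s}\zeta_\delta'(|z|)\,\phi\,\Big(\widetilde A\nabla W\cdot\tfrac{z}{|z|}\Big)\,dz
=\kappa_{s,N}\!\int_{B_{r_0}'}\!\!\widetilde h\,\Tr(W)\,\Tr(\eta_\delta\phi)\,dy.
\]
Then I would let $\delta\to0^+$. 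The first bulk integral tends to $\int_{B_r^+}t^{1-2s}\widetilde A\nabla W\cdot\nabla\phi\,dz$ by dominated convergence: $0\le\eta_\delta\le1$, $\eta_\delta\to\chi_{B_r^+}$ a.e.\ (the sphere $\partial B_r^+$ being negligible), and $t^{1-2s}\widetilde A\nabla W\cdot\nabla\phi\in L^1(B_{r_0}^+)$ by \eqref{ellipticity} and Young's inequality. For the right-hand side, the product rule for traces gives $\Tr(\eta_\delta\phi)=\zeta_\delta(|\cdot|)\Tr(\phi)$ on $B_{r_0}'$; since $\zeta_\delta(|x|)\to\chi_{B_r'}$ a.e.\ and $\widetilde h\,\Tr(W)\,\Tr(\phi)\in L^1(B_{r_0}')$ by \eqref{htildedovesta}, Hölder's inequality and the Sobolev trace inequality \eqref{ineq-soblev-trace}, dominated convergence yields the limit $\kappa_{s,N}\int_{B_r'}\widetilde h\,\Tr(W)\,\Tr(\phi)\,dy$.

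The delicate term is the middle one. Writing $\widetilde A\nabla W\cdot\tfrac{z}{|z|}=\tfrac{1}{|z|}\widetilde A\nabla W\cdot z$ and applying the coarea formula (legitimate since the integrand lies in $L^1(B_{r_0}^+)$), it equals $-\tfrac1\delta\int_r^{r+\delta}\rho^{-1}G(\rho)\,d\rho$, where $G(\rho):=\int_{S_\rho^+}t^{1-2s}(\widetilde A\nabla W\cdot z)\,\phi\,dS$ is well defined and continuous on $(0,r_0)$ by Remark \ref{coarea-well-defined}; this is precisely the point at which the $H^1$-regularity \eqref{eq:regularity} of $\nabla_x W$ and $t^{1-2s}\partial_t W$ is used. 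Continuity of $G$ then forces $-\tfrac1\delta\int_r^{r+\delta}\rho^{-1}G(\rho)\,d\rho\to-\tfrac1r G(r)$ as $\delta\to0^+$. Collecting the three limits gives
\[
\int_{B_r^+}t^{1-2s}\widetilde A\nabla W\cdot\nabla\phi\,dz-\frac1r\int_{S_r^+}t^{1-2s}(\widetilde A\nabla W\cdot z)\phi\,dS=\kappa_{s,N}\int_{B_r'}\widetilde h\,\Tr(W)\,\Tr(\phi)\,dy,
\]
which is \eqref{formula-div} after rearranging. I expect the coarea identification together with the continuity of $\rho\mapsto G(\rho)$ (i.e.\ the input from Remark \ref{coarea-well-defined}, ultimately Remark \ref{rem:regularity}) to be the only real obstacle; the remaining passages to the limit are routine dominated-convergence and density arguments.
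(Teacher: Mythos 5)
Your proof is correct and follows essentially the same route as the paper: test \eqref{eq-extension-straith} with $\phi$ multiplied by a radial Lipschitz cutoff, pass to the limit using dominated convergence for the bulk and trace terms, and recover the spherical term via the coarea formula together with the continuity of $r\mapsto\int_{S_r^+}t^{1-2s}(\widetilde A\nabla W\cdot z)\phi\,dS$ from Remark \ref{coarea-well-defined}. The only cosmetic difference is that your cutoff transitions on $[r,r+\delta]$ while the paper's transitions on $[r-\tfrac1n,r]$, which changes nothing in the argument.
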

\begin{proof}
  By density it is enough to prove \eqref{formula-div} for
  $\phi \in C^\infty (\overline{B_{r_0}^+})$.  Let $r\in(0,r_0)$. For
  every $n\in{\mathbb N}$, let
\begin{equation*}
\eta_n(z):= 
\begin{cases}
1, &\text{ if } \quad 0 \le |z| \le r-\frac{1}{n}, \\
n(r-|z|),   &\text{ if } \quad  r-\frac{1}{n}\le |z| \le  r,\\
0, &\text{ if }    \quad |z| \ge r.
\end{cases}
\end{equation*}
Testing \eqref{eq-extension-straith} with $\phi\eta_n$ and passing
to the limit as $n \to \infty$, we obtain \eqref{formula-div} thanks to
the integral mean value theorem and
Remark \ref{coarea-well-defined}.
\end{proof}

\begin{remark}
  For all $r \in (0, r_0]$ and any $v \in H^1(B_r^+,t^{1-2s})$, thanks
  to \eqref{ineq-found}, \eqref{ellipticity} and \eqref{mu-estimates},
\begin{multline*}
  \int_{B_r^+} t^{1-2s} |\nabla v|^2 \, dz \le 2\int_{B_r^+} t^{1-2s}
  \widetilde A\nabla v\cdot \nabla v \, dz -2 \kappa_{N,s}\int_{B_r'}
  \widetilde h |\Tr(v)|^2 \, dx\\
  +2\kappa_{N,s}\eta_{\tilde{h}}(r)\left(\int_{B_r^+}t^{1-2s} |\nabla
    v|^2\, dz +\frac{N-2s}{r}\int_{S_r^+} t^{1-2s}\mu v^2 \,
    dS\right).
\end{multline*}
Therefore, if $\eta_{\tilde{h}}(r) < \frac{1}{2\kappa_{N,s}}$,
\begin{multline}\label{ineq-nabla-E}
  \int_{B_r^+} t^{1-2s} |\nabla v|^2 \, dz \le
  \frac{2}{1-2\kappa_{N,s}\eta_{\tilde{h}}(r)}\left(\int_{B_r^+}
    t^{1-2s} \widetilde A\nabla v\cdot \nabla v \, dz - \kappa_{N,s}\int_{B_r'}
    \widetilde h |\Tr(v)|^2 \, dx\right) \\
  +\frac{2(N-2s)\kappa_{N,s}
    \eta_{\tilde{h}}(r)}{(1-2\kappa_{N,s}\eta_{\tilde{h}}(r))r}
  \int_{S_r^+} t^{1-2s}\mu v^2 \, dS.
\end{multline}
\end{remark}

\section{The Monotonicity Formula} \label{sec-the-monotonicity-formula}

Let $W$ be a non trivial weak solution of
\eqref{prob-extension-straith}. For any $r \in (0,r_0]$ we define the
height function and the energy function as
\begin{align}
  &H(r):=\frac{1}{r^{N+1-2s}}\int_{S_r^+} t^{1-2s}\mu W^2 \, dS,\label{H} \\
  &D(r):=\frac{1}{r^{N-2s}}\left(\int_{B_r^+} t^{1-2s}\widetilde A
    \nabla W
    \cdot \nabla W \, dz -\kappa_{N,s}
    \int_{B_r'} \widetilde h |\Tr W|^2 \, dx \right), \label{D}
\end{align}
respectively. Eventually choosing $r_0$ smaller from the beginning, we
may assume that
\begin{equation}\label{etapiccolo}
  \eta_{\tilde{h}}(r) < \frac{1}{4\kappa_{N,s}}
  \quad \text{for all $r \in (0,r_0]$},
\end{equation}
so that \eqref{ineq-nabla-E} holds for every $r \in (0,r_0]$.

\begin{proposition}\label{derivataH}
  Let $H$ and $D$ be as in \eqref{H} and \eqref{D}. Then
  $H \in W^{1,1}_{\rm loc}((0,r_0])$ and
\begin{equation}\label{H'1}
  H'(r)=\frac{2}{r^{N+1-2s}} \int_{S_r^+} t^{1-2s}\mu W \pd{W}{\nu}\,dS
  +H(r)O(1) \quad \text{  as } r \to 0^+
\end{equation}
in the sense of distributions and almost everywhere, where $\nu$ is
the outer normal vector to $B_r^+$ on $S_r^+$, i.e.
$ \nu(z):=\frac{z}{|z|}$.  Moreover, we have that almost everywhere
\begin{equation}\label{H'2}
  H'(r)=\frac{2}{r^{N+1-2s}}\int_{S_r^+} t^{1-2s}(\widetilde A\nabla W \cdot\nu)
  W \, dS +H(r)O(1) \quad \text{  as } r \to 0^+	
\end{equation}
and 
\begin{equation}\label{D-as-H'}
H'(r)=\frac{2}{r}D(r)+H(r)O(1)	 \quad \text{ as } r \to 0^+. 
\end{equation}
\end{proposition}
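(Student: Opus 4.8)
The plan is to compute $H'(r)$ directly from the definition \eqref{H}, handling the geometric weight $\mu$ and the surface measure $dS$ carefully. First I would rewrite the surface integral over $S_r^+$ as an integral over the fixed hemisphere $\mathbb S^+$ by the change of variables $z = r\theta$, so that
\begin{equation*}
  H(r) = \frac{1}{r^{N+1-2s}}\int_{S_r^+} t^{1-2s}\mu W^2\,dS
       = \int_{\mathbb S^+} (r\theta_{N+1})^{1-2s}\,\mu(r\theta)\,W^2(r\theta)\,r^{1-2s}\,\cdot r^{-(1-2s)}\cdots
\end{equation*}
keeping track of the $r^N$ Jacobian of the sphere so that the powers of $r$ cancel and one is left differentiating under the integral sign in $r$. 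The regularity \eqref{eq:regularity} from Remark \ref{rem:regularity} (giving $\nabla_xW\in H^1(B_r^+,t^{1-2s})$ and $t^{1-2s}\partial_tW\in H^1(B_r^+,t^{2s-1})$) together with the continuity statements in Remark \ref{coarea-well-defined} is exactly what is needed to justify that $H$ is locally absolutely continuous on $(0,r_0]$ and that the distributional derivative agrees with the pointwise one a.e. Differentiating produces three terms: one from $\partial_r(W^2(r\theta)) = 2W\,\partial_\nu W$ scaled back to $S_r^+$, which gives the main term $\frac{2}{r^{N+1-2s}}\int_{S_r^+}t^{1-2s}\mu W\,\partial_\nu W\,dS$; one from differentiating $\mu(r\theta)$, which by \eqref{mu-O} (namely $\nabla\mu = O(1)$, $\mu = 1+O(|z|)$, combined with the lower bound \eqref{mu-estimates}) contributes a term of size $H(r)\,O(1)$; and the boundary-of-hemisphere contribution, which vanishes because on $\mathbb S'$ the weight $\theta_{N+1}^{1-2s}$ degenerates. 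This yields \eqref{H'1}.

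To pass from \eqref{H'1} to \eqref{H'2}, I would replace $\mu W\,\partial_\nu W$ on $S_r^+$ using the pointwise identity $\widetilde A\nabla W\cdot\nu = \widetilde A\nabla W\cdot\frac{z}{|z|}$ and the definition \eqref{mu-beta} of $\mu$ and $\beta$: on $S_r^+$ one has $\mu(z) = \frac{\widetilde A(y)z\cdot z}{|z|^2}$, so $\mu(z)\,\partial_\nu W = \mu(z)\,\nabla W\cdot\frac{z}{r}$, and the difference $\widetilde A\nabla W\cdot\frac{z}{r} - \mu(z)\,\nabla W\cdot\frac{z}{r}$ is controlled using the expansion \eqref{A-O}, $\widetilde A(y) = \mathrm{Id}_{N+1} + O(|y|)$, which makes $\widetilde A z - \mu(z)z = O(|z|^2)$ on $S_r^+$; integrating the error against $t^{1-2s}|\nabla W||W|$ and using Cauchy--Schwarz plus the Poincaré/trace bounds (Remark \ref{remarknormaequiv}) shows it is absorbed into the $H(r)\,O(1)$ remainder. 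Here one should note $|\nabla W\cdot z| \le |\widetilde A\nabla W\cdot z|\cdot(1+O(|z|))$ by ellipticity \eqref{ellipticity}, so the error is genuinely of the form $H(r)\,O(1)$.

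Finally, \eqref{D-as-H'} follows by applying the integration-by-parts formula \eqref{formula-div} from Proposition \ref{prop-forula-div} with the test function $\phi = W$: this gives
\begin{equation*}
  \frac1r\int_{S_r^+}t^{1-2s}(\widetilde A\nabla W\cdot z)W\,dS
  = \int_{B_r^+}t^{1-2s}\widetilde A\nabla W\cdot\nabla W\,dz
    - \kappa_{s,N}\int_{B_r'}\widetilde h\,|\Tr W|^2\,dx
  = r^{N-2s}D(r),
\end{equation*}
so that $\frac{2}{r^{N+1-2s}}\int_{S_r^+}t^{1-2s}(\widetilde A\nabla W\cdot\nu)W\,dS = \frac{2}{r}D(r)$, and combining with \eqref{H'2} yields the claim. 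The main obstacle I anticipate is the first step: rigorously justifying that $H\in W^{1,1}_{\mathrm{loc}}((0,r_0])$ and differentiating the surface integral under the integral sign, since $W$ a priori lives only in $H^1(B_{r_0}^+,t^{1-2s})$ and the trace on $S_r^+$ need not vary smoothly in $r$ — this is precisely where the improved regularity \eqref{eq:regularity} and the coarea-type continuity of Remark \ref{coarea-well-defined} are essential, and some care is needed to handle the degenerate weight $t^{1-2s}$ near $\{t=0\}$ when integrating by parts in the radial variable.
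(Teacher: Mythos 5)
Your treatment of \eqref{H'1} (rescaling $H$ to an integral over $\mathbb S^+$, differentiating under the integral sign with the $\mu$-term controlled via \eqref{mu-O} and \eqref{mu-estimates}, and using the regularity of Remark \ref{rem:regularity} to justify $H\in W^{1,1}_{\rm loc}$) is the standard argument, and your derivation of \eqref{D-as-H'} from \eqref{H'2} by taking $\phi=W$ in \eqref{formula-div} is exactly the intended route (the paper itself omits the proof, referring to \cite[Lemma 3.1]{de2021strong}). The genuine gap is in your bridge from \eqref{H'1} to \eqref{H'2}. The pointwise discrepancy on $S_r^+$ is $\nabla W\cdot\frac{(\widetilde A-\mu\,\mathrm{Id})z}{r}=O(r)|\nabla W|$, so after Cauchy--Schwarz your error term is bounded by
\begin{equation*}
  \frac{C}{r^{N-2s}}\left(\int_{S_r^+}t^{1-2s}|\nabla W|^2\,dS\right)^{\!1/2}
  \left(\int_{S_r^+}t^{1-2s}W^2\,dS\right)^{\!1/2},
\end{equation*}
and for this to be $H(r)O(1)$ you would need $r^2\int_{S_r^+}t^{1-2s}|\nabla W|^2\,dS\lesssim\int_{S_r^+}t^{1-2s}W^2\,dS$ for a.e.\ $r$, which is false in general: there is no bound of the gradient on a sphere by the function on the same sphere, and Remark \ref{remarknormaequiv} and the Poincar\'e/trace inequalities are statements about solid balls, not about a single sphere $S_r^+$. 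Since \eqref{H'2}--\eqref{D-as-H'} are used later as pointwise a.e.\ identities with a remainder $H(r)O(1)$ (e.g.\ in the proof of the doubling and boundedness of $\mathcal N$), an error merely in $L^1_{\rm loc}$ in $r$ is not acceptable, so this step as written fails.

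The repair uses the specific structure of $\mu$ in \eqref{mu-beta}: since $\widetilde A z\cdot z=\mu|z|^2$, the vector field $v:=(\mu\,\mathrm{Id}-\widetilde A)\nu$ is \emph{tangent} to $S_r^+$ (and $v=O(r)$, with bounded derivatives). Writing $W\nabla W=\tfrac12\nabla(W^2)$, the discrepancy becomes $\tfrac12\int_{S_r^+}t^{1-2s}v\cdot\nabla_{S_r}(W^2)\,dS$, which one integrates by parts along $S_r^+$; this produces only $\int_{S_r^+}t^{1-2s}W^2\,(\cdots)\,dS$ with bounded coefficients (the weight term is harmless because the $(N+1)$-th component of $v$ is $O(t)$, which also kills the boundary term on the equator $S_r'$), hence an error of the admissible form $H(r)O(1)$. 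An equivalent alternative is to note that on $S_r^+$ one has $t^{1-2s}\mu W^2=\tfrac1r\,t^{1-2s}W^2\,\widetilde A z\cdot\nu$, rewrite $r^{N+1-2s}H(r)=\tfrac1r\int_{B_r^+}\dive\big(t^{1-2s}W^2\widetilde A z\big)\,dz$ by the divergence theorem (the bottom contribution vanishes since $(\widetilde Az)_{N+1}=\widetilde\alpha\, t$), and differentiate this solid integral via the coarea formula: the surface term produced is precisely $\frac{2}{r^{N+1-2s}}\int_{S_r^+}t^{1-2s}W\,\widetilde A\nabla W\cdot\nu\,dS$ plus $H(r)O(1)$, giving \eqref{H'2} directly. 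Either of these replaces your Cauchy--Schwarz step; the rest of your plan then goes through.
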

\begin{proof}
  The proof is similar to that of \cite[Lemma 3.1]{de2021strong}
  thus we omit it.
\end{proof}

\begin{proposition}\label{H>0}
We have that $H(r) >0$ for every $r \in (0,r_0]$.
\end{proposition}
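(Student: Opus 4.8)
The goal is to prove $H(r) > 0$ for all $r \in (0, r_0]$, where $H(r) = r^{-(N+1-2s)} \int_{S_r^+} t^{1-2s} \mu W^2 \, dS$. Since $\mu \geq 1/2 > 0$ by \eqref{mu-estimates} and $t^{1-2s} > 0$, clearly $H(r) \geq 0$, so the real content is strict positivity, i.e. ruling out that $W \equiv 0$ on $S_r^+$ for some $r$.

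The strategy is by contradiction combined with a unique continuation argument. Suppose $H(r_1) = 0$ for some $r_1 \in (0, r_0]$. Then $W = 0$ a.e. on $S_{r_1}^+$ (in the trace sense \eqref{trace-Sr+}). I would first argue that this forces $W \equiv 0$ on the whole half-ball $B_{r_1}^+$: using \eqref{D-as-H'} and \eqref{H'1}, or more directly by testing the equation, one shows that once $H$ vanishes at a point it vanishes on an interval. Actually a cleaner route: plug $\phi = W$ into the integration by parts formula \eqref{formula-div} on $B_r^+$ for $r \leq r_1$. Combined with \eqref{ineq-nabla-E} and \eqref{etapiccolo}, if $H(r_1) = 0$ then the boundary term at $S_{r_1}^+$ vanishes, and one deduces $\int_{B_{r_1}^+} t^{1-2s} |\nabla W|^2 \, dz = 0$, hence $W$ is constant on $B_{r_1}^+$; since $W = 0$ on $S_{r_1}^+$ (or since $W$ vanishes on the portion of the hyperplane $\{y_N = 0\}$ by \eqref{trace-hyperplane}), that constant is $0$, so $W \equiv 0$ on $B_{r_1}^+$.

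Once $W \equiv 0$ on $B_{r_1}^+$, I would invoke a unique continuation principle for the degenerate elliptic equation $\dive(t^{1-2s} \widetilde A \nabla W) = 0$ with the Muckenhoupt $A_2$ weight $t^{1-2s}$ — the same principle cited in Remark \ref{rem-Y-not-0}, from \cite{MR833393}, \cite{MR2370633}, and \cite[Proposition 2.2]{MR3268922}, after an even reflection in $t$ to remove the Neumann condition on $B_{r_0}'$. This reflection is legitimate because the Neumann condition in \eqref{prob-extension-straith} is $-\lim_{t \to 0^+} t^{1-2s} \widetilde\alpha \, \partial_t W = \kappa_{s,N} \widetilde h \Tr(W)$; reflecting evenly in $t$ produces a solution (in the weak sense) of an equation of the same type on a full ball, with coefficients still in the right class, and the reflected $W$ vanishes on a half-ball, hence on a full ball $B_{r_1}$. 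Unique continuation then gives $W \equiv 0$ on $B_{r_0}^+$, and by the connectedness and the relation \eqref{eq:1defW} $W = U \circ F$ together with standard propagation (covering $\mathcal{C}_\Omega$ by chains of balls), $U \equiv 0$ on $\mathcal{C}_\Omega$, whence $u = \Tr_\Omega(U) \equiv 0$, contradicting the assumption that $W$ (equivalently $u$) is non trivial.

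The main obstacle is the unique continuation step: one must make sure the reflected equation genuinely falls under the hypotheses of the available strong unique continuation results for $A_2$-degenerate operators — in particular that the reflected coefficient matrix $\widetilde A$ retains Lipschitz regularity across $\{t = 0\}$ (it does not depend on $t$ near the relevant hyperplane, by Proposition \ref{diffeomorphism}(iii), and $\widetilde A$ has $C^{0,1}$ entries), and that the lower-order term coming from $\widetilde h$ is controlled in the appropriate scale-invariant norm via \eqref{ineq-found}–\eqref{htildedovesta}. An alternative, avoiding reflection, is to cite the strong unique continuation / Almgren monotonicity machinery directly for the boundary problem, but invoking the reflected-equation version is the cleanest. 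A subtlety worth flagging: one should phrase the argument so that it is enough to conclude $W$ vanishes on an open subset of $B_{r_0}^+$, since that already suffices for the unique continuation conclusion.
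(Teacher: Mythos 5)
Your first step coincides with the paper's: from $H(r)=0$ and \eqref{mu-estimates} you deduce $W=0$ on $S_r^+$, test with $W$ (e.g. via \eqref{formula-div}, whose boundary term vanishes), and use \eqref{ineq-nabla-E} together with \eqref{etapiccolo} to conclude $W\equiv 0$ on $B_r^+$; this is exactly the paper's argument. Where you diverge is the unique continuation step, and there your route is heavier than necessary and, as stated, slightly inaccurate. The paper does not reflect across $\{t=0\}$ at all: once $W$ vanishes on the open set $B_r^+\subset\{t>0\}$, one observes that on any ball compactly contained in the open half-ball $B_{r_0}^+$ the weight $t^{1-2s}$ is smooth and bounded away from $0$ and $\infty$, so $\dive(t^{1-2s}\widetilde A\nabla\cdot)$ is there a uniformly elliptic operator with Lipschitz coefficients; classical unique continuation (e.g. \cite{MR833393}) plus a chain-of-balls argument in the connected set $B_{r_0}^+$ gives $W\equiv0$ on $B_{r_0}^+$, which already contradicts the standing assumption that $W$ is non trivial — no boundary condition, no weighted theory, and no passage back to $U$ or $u$ is needed. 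Your reflected-equation route has the following flaw as phrased: since the Neumann condition in \eqref{prob-extension-straith} is inhomogeneous, the even reflection of $W$ in $t$ is \emph{not} a weak solution of $\dive(|t|^{1-2s}\widetilde A\nabla\cdot)=0$ on a full ball; it solves that equation only up to a term supported on $\{t=0\}$ proportional to $\widetilde h\,\Tr(W)$, so the homogeneous-case references of Remark \ref{rem-Y-not-0} do not apply verbatim, and one would need a unique continuation result for the degenerate equation with variable Lipschitz matrix $\widetilde A$ \emph{and} a boundary potential in the class \eqref{htildedovesta} — essentially the kind of statement the Almgren machinery of this very paper is built to produce, so invoking it here is either circular or a substantial overkill. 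You do flag the right subtlety at the end (vanishing on an open subset suffices); pushing that one step further — the open subset lies in $\{t>0\}$, where the equation is a classical elliptic one — is precisely what makes the paper's short argument work.
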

\begin{proof}
  Let us assume by contradiction that there exists $r \in (0,r_0]$
  such that $H(r)=0$. Then, from \eqref{H} and \eqref{mu-estimates} we
  deduce that $W \equiv 0$ on $S_r^+$. Thus we can test
  \eqref{eq-extension-straith} with $W$, obtaining that
\begin{align*}
  0&=\int_{B_r^+} t^{1-2s} \widetilde A \nabla W \cdot \nabla W \, dz
    -\kappa_{N,s}\int_{B_r'} \widetilde h |\Tr(W)|^2 \, dx \\
  & \ge \left(\frac{1}{2}-
    \kappa_{N,s}\eta_{\tilde{h}}(r)\right)\norm{\nabla W}^2_{L^2(B^+_r,t^{1-2s})},
\end{align*}
thanks to \eqref{ineq-nabla-E}. Then, by
\eqref{etapiccolo} we can conclude that $W \equiv 0$ on $B_r^+$; this
implies that $W \equiv 0$ on $B^+_{r_0}$ by classical unique
continuation principles for second order elliptic operators with
Lipschitz coefficients (see e.g. 
\cite{MR833393}), giving rise to a contradiction.
\end{proof}
The following proposition contains a Pohozaev-type identity for
problem \eqref{prob-extension-straith}. For its proof we refer to
  \cite[Proposition 2.3]{fellisiclari}, where a more general version
  is established  exploiting some
Sobolev-type regularity results.
\begin{proposition}\cite[Proposition 2.3]{fellisiclari}
  Let $W$ be a weak solution to equation
  \eqref{prob-extension-straith}. Then, for a.e. $r \in (0, r_0)$,
  \begin{align} \label{eq-Poho-identity}
    &\int_{S_r^+} t^{1-2s}\widetilde A \nabla W \cdot \nabla W \, dS-
      \kappa_{N,s}\int_{S_r'} \widetilde h|\Tr(W)|^2 \, dS'\\
    &=2\int_{S_r^+}t^{1-2s}\frac{| \widetilde A \nabla W \cdot \nu|^2}
      {\mu}\,dS
      -\frac{\kappa_{N,s}}{r}\int_{B_r'}(\mathop{\rm{div_y}}(\beta')
      \widetilde h
      +\beta'\cdot \nabla \widetilde h)|\Tr(W)|^2 \, dy \notag \\
    &+\frac{1}{r}\int_{B_r^+}t^{1-2s} \widetilde A \nabla W \cdot
      \nabla W \dive(\beta) \, dz -\frac{2}{r}
      \int_{B_r^+}t^{1-2s}J_\beta(\widetilde A \nabla W)
      \cdot \nabla W \, dz \notag\\
    &+\frac{1}{r}\int_{B_r^+}t^{1-2s} (d\widetilde A \,\nabla W \,\nabla W)
      \cdot\beta\, dz +\frac{1-2s}{r}\int_{B_r^+}t^{1-2s}
      \frac{\widetilde \alpha}{\mu} \widetilde  A \nabla W
      \cdot \nabla W \, dz\notag,
\end{align}
where $\mu$ and $ \beta $ are defined in \eqref{mu-beta},
$\widetilde\alpha$ in \eqref{alpha}, $\beta'$ in \eqref{beta'}, $\nu$
is the outer normal vector to $B_r^+$ on $S_r^+$, i.e.
$\nu(z)=\frac{z}{|z|}$, and $dS'$ denotes the volume element on
  $(N-1)$-dimensional spheres.
\end{proposition}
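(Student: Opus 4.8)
The identity \eqref{eq-Poho-identity} is a domain-variation (Rellich--Pohozaev) identity, obtained by testing equation \eqref{prob-extension-straith} against $\beta\cdot\nabla W$, where $\beta$ from \eqref{mu-beta} is the radial deformation field adapted to the anisotropy $\widetilde A$; the key algebraic facts are $\widetilde A z=\mu\beta$, so that $\beta\cdot\nu=r$ on $S_r^+$, $\beta'\cdot\nu'=r$ on $S_r'$, and $\widetilde A\nabla W\cdot z=\mu\,(\beta\cdot\nabla W)$. The first and main step is to check that $\phi:=\beta\cdot\nabla W$ is an admissible test function, i.e. $\phi\in H^1(B_\rho^+,t^{1-2s})$ for every $\rho<r_0$. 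Writing
\[
  \beta\cdot\nabla W=\beta'\cdot\nabla_x W+\frac{\widetilde\alpha}{\mu}\,t^{2s}\,\bigl(t^{1-2s}\partial_t W\bigr),
\]
this follows from the Sobolev-type regularity recalled in Remark \ref{rem:regularity} (namely $\nabla_x W\in H^1(B_\rho^+,t^{1-2s})$ and $t^{1-2s}\partial_t W\in H^1(B_\rho^+,t^{2s-1})$) together with the $C^{0,1}$ bounds \eqref{everything-bounded} on $\beta'$, $\mu$, $\widetilde\alpha$: the extra factor $t^{2s}$ turns the weight $t^{2s-1}$ into $t^{1-2s}$ on the second term. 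Without this regularity the manipulations below are only formal, so this is where the real work (and the reliance on \cite{fellisiclari}) lies.

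Once $\phi$ is admissible, I would fix $r<\rho<r_0$ and apply Proposition \ref{prop-forula-div} (with $B_\rho^+$ in place of $B_{r_0}^+$) at radius $r$, obtaining for a.e. $r$
\begin{multline*}
\int_{B_r^+}t^{1-2s}\widetilde A\nabla W\cdot\nabla(\beta\cdot\nabla W)\,dz\\
=\frac1r\int_{S_r^+}t^{1-2s}(\widetilde A\nabla W\cdot z)(\beta\cdot\nabla W)\,dS+\kappa_{N,s}\int_{B_r'}\widetilde h\,\Tr W\,\Tr(\beta\cdot\nabla W)\,dy.
\end{multline*}
Then I expand $\partial_i(\beta\cdot\nabla W)=(\partial_i\beta_k)\partial_k W+\beta_k\partial_{ik}W$. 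The first group yields $\int_{B_r^+}t^{1-2s}J_\beta(\widetilde A\nabla W)\cdot\nabla W\,dz$. For the second, the symmetry of $\widetilde A$ gives $\beta_k(\widetilde A\nabla W)_i\partial_{ik}W=\tfrac12\beta_k\partial_k(\widetilde A\nabla W\cdot\nabla W)-\tfrac12\beta_k(\partial_k\widetilde A\,\nabla W)\cdot\nabla W$; integrating the divergence term and using $\partial_k(t^{1-2s}\beta_k)=t^{1-2s}\dive\beta+(1-2s)t^{1-2s}\tfrac{\widetilde\alpha}{\mu}$ (because $\beta_{N+1}=\widetilde\alpha t/\mu$) produces the bulk terms $\tfrac1r\int_{B_r^+}t^{1-2s}(\dive\beta)(\widetilde A\nabla W\cdot\nabla W)$ and $\tfrac{1-2s}{r}\int_{B_r^+}t^{1-2s}\tfrac{\widetilde\alpha}{\mu}\widetilde A\nabla W\cdot\nabla W$, while the remaining summand gives $\tfrac1r\int_{B_r^+}t^{1-2s}(d\widetilde A\,\nabla W\,\nabla W)\cdot\beta$; the boundary contribution from this integration by parts equals $\tfrac r2\int_{S_r^+}t^{1-2s}\widetilde A\nabla W\cdot\nabla W\,dS$, since $\beta\cdot\nu=r$ on $S_r^+$ and $t^{1-2s}\beta_{N+1}\to0$ on $\{t=0\}$.

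It remains to treat the two boundary integrals on the right. On $S_r^+$ one has $(\widetilde A\nabla W\cdot z)(\beta\cdot\nabla W)=|\widetilde A\nabla W\cdot z|^2/\mu=r^2|\widetilde A\nabla W\cdot\nu|^2/\mu$, which is the Rellich term $2\int_{S_r^+}t^{1-2s}|\widetilde A\nabla W\cdot\nu|^2/\mu\,dS$ after the final rescaling. On the flat face $B_r'\subset\{t=0\}$, the contribution $\beta_{N+1}\partial_t W=\tfrac{\widetilde\alpha}{\mu}t^{2s}(t^{1-2s}\partial_t W)$ tends to $0$ as $t\to0^+$ (its factor $t^{1-2s}\partial_t W$ has an $L^2$ trace by \eqref{eq:regularity}, and $t^{2s}\to0$), so $\Tr(\beta\cdot\nabla W)=\beta'\cdot\nabla_y\Tr W$ on $B_r'$; writing $\widetilde h\,\Tr W\,(\beta'\cdot\nabla_y\Tr W)=\tfrac12\beta'\cdot\nabla_y(\widetilde h|\Tr W|^2)-\tfrac12(\beta'\cdot\nabla_y\widetilde h)|\Tr W|^2$ and integrating by parts in $y$ over $B_r'$ (licit thanks to \eqref{htildedovesta} and the trace regularity of $W$) produces $-\tfrac1{2}\int_{B_r'}\bigl((\dive\beta')\widetilde h+\beta'\cdot\nabla\widetilde h\bigr)|\Tr W|^2\,dy+\tfrac r2\int_{S_r'}\widetilde h|\Tr W|^2\,dS'$, using $\beta'\cdot\nu'=r$ on $S_r'$. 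Equating the two expressions for $\int_{B_r^+}t^{1-2s}\widetilde A\nabla W\cdot\nabla(\beta\cdot\nabla W)\,dz$, multiplying by $2/r$, and moving the term $\kappa_{N,s}\int_{S_r'}\widetilde h|\Tr W|^2\,dS'$ to the left-hand side yields exactly \eqref{eq-Poho-identity}. The ``a.e. $r$'' qualification and the continuity in $r$ of the spherical integrals are supplied by Remark \ref{coarea-well-defined}, while the passage from the weak formulation to the integrated identity at radius $r$ (the cutoff of the indicator of $B_r^+$ and the integral mean value theorem) is the one already used in Proposition \ref{prop-forula-div}. As noted, the genuine difficulty is concentrated in the first step --- verifying that $\beta\cdot\nabla W$ lies in the weighted energy space and that the above product- and chain-rule computations survive the degeneracy or singularity of $t^{1-2s}$ at $t=0$ --- which is precisely the role of the Sobolev-type regularity theory of \cite{fellisiclari}.
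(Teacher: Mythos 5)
Your proposal is correct and takes essentially the same route as the paper's source for this statement: the paper gives no proof of its own but defers to \cite{fellisiclari}, whose argument is precisely the Rellich--Pohozaev computation with the vector field $\beta$, made rigorous by the Sobolev-type regularity recalled in Remark \ref{rem:regularity}. Your algebra (the facts $\beta\cdot\nu=r$ on $S_r^+$, $\beta'\cdot\nu'=r$ on $S_r'$, $(\widetilde A\nabla W\cdot z)(\beta\cdot\nabla W)=|\widetilde A\nabla W\cdot z|^2/\mu$, the splitting of the bulk terms via $\partial_k\bigl(t^{1-2s}\beta_k\bigr)$ and $d\widetilde A$, and the tangential integration by parts on $B_r'$) reproduces \eqref{eq-Poho-identity} exactly, and you correctly identify that the genuine content lies in the admissibility of $\beta\cdot\nabla W$ and in justifying the second-derivative manipulations in the weighted spaces, which is what \cite{fellisiclari} supplies.
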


\begin{remark}
As in Remark \ref{coarea-well-defined}, by the
  Coarea Formula  we have that
  \begin{equation*}
    \int_{B_{r_0}'} |\widetilde h| |\Tr(W)|^2 \, dx
  =\int_0^{r_0}\left(\int_{S_\rho'} |\widetilde h| |\Tr(W)|^2 \,
    dS'\right)
  \,d\rho ,
\end{equation*}
$S'_\rho$ era già stata definita hence
$\rho \to \int_{S_\rho'} \widetilde h|\Tr(W)|^2 \, dS'$ is a
well-defined $L^1(0,r_0)$-function, as a consequence of
\eqref{htildedovesta}, \eqref{ineq-soblev-trace} and the H\"{o}lder
inequality.
\end{remark}

\begin{proposition}
Let $D$ be as in \eqref{D}. Then $D \in W^{1,1}_{\rm loc}((0,r_0])$ and 
\begin{equation}\label{eq-D'}
  D'(r)=2r^{2s-N}\int_{S_r^+}t^{1-2s}\frac{| \widetilde A \nabla W \cdot \nu|^2}
  {\mu}\,dS + O\left(r^{-1+\frac{4s^2 \e}{N+2s \e}}\right)
  \left[D(r)+\frac{N-2s}{2}H(r)\right] 
\end{equation}
as  $r \to 0^+$,  in the sense of distributions and almost everywhere.
\end{proposition}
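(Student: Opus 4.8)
The plan is to differentiate $D$ by the Coarea Formula, to replace the resulting surface integrals by means of the Pohozaev-type identity \eqref{eq-Poho-identity}, and then to absorb every leftover term into the error using the functional inequalities of Sections \ref{sec-notation-preliminaries}--\ref{sec-straightening the boundary} and \eqref{etapiccolo}. To this aim I would set
\begin{equation*}
  E(r):=\int_{B_r^+} t^{1-2s}\widetilde A\nabla W\cdot\nabla W\,dz-\kappa_{N,s}\int_{B_r'}\widetilde h\,|\Tr W|^2\,dx,
\end{equation*}
so that $D(r)=r^{2s-N}E(r)$. Since $W\in H^1(B_{r_0}^+,t^{1-2s})$, the density $t^{1-2s}\widetilde A\nabla W\cdot\nabla W$ belongs to $L^1(B_{r_0}^+)$ by \eqref{ellipticity}, whereas $\widetilde h\,|\Tr W|^2\in L^1(B_{r_0}')$ by \eqref{htildedovesta}, \eqref{ineq-soblev-trace} and the H\"older inequality; hence the Coarea Formula shows that $E$ is locally absolutely continuous on $(0,r_0]$ with
\begin{equation*}
  E'(r)=\int_{S_r^+} t^{1-2s}\widetilde A\nabla W\cdot\nabla W\,dS-\kappa_{N,s}\int_{S_r'}\widetilde h\,|\Tr W|^2\,dS'
\end{equation*}
for a.e. $r\in(0,r_0)$. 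In particular $D\in W^{1,1}_{\rm loc}((0,r_0])$ and $r^{N-2s}D'(r)=E'(r)-\tfrac{N-2s}{r}E(r)$ a.e.

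I would then substitute the Pohozaev identity \eqref{eq-Poho-identity} for $E'(r)$. Using the expansions $\dive(\beta)=N+1+O(|z|)$ and $J_\beta=\mathop{\rm Id}\nolimits_{N+1}+O(|z|)$ from \eqref{jacob-beta-estimates}, $\widetilde\alpha/\mu=1+O(|z|)$ from \eqref{A-O} and \eqref{mu-O}, $d\widetilde A\in L^\infty$ and $\beta=O(|z|)$ from \eqref{everything-bounded} and \eqref{jacob-beta-estimates}, together with $|\widetilde A\nabla W\cdot\nabla W|\le 2|\nabla W|^2$ (see \eqref{ellipticity}), the four bulk integrals on the right hand side of \eqref{eq-Poho-identity} add up to
\begin{equation*}
  \frac{N-2s}{r}\int_{B_r^+}t^{1-2s}\widetilde A\nabla W\cdot\nabla W\,dz+\frac1r\int_{B_r^+}O(|z|)\,t^{1-2s}|\nabla W|^2\,dz,
\end{equation*}
the decisive point being that the principal coefficients $N+1$, $-2$, $0$ and $1-2s$ (coming respectively from $\dive(\beta)$, $-2J_\beta$, $(d\widetilde A\,\nabla W\,\nabla W)\cdot\beta$ and $(1-2s)\widetilde\alpha/\mu$) add up to $N-2s$. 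Likewise, since $\dive(\beta')=N+O(|y|)$ and $|\beta'|=O(|y|)$ by \eqref{beta'-estimate}, the boundary integral in \eqref{eq-Poho-identity} equals $-\tfrac{N\kappa_{N,s}}{r}\int_{B_r'}\widetilde h\,|\Tr W|^2\,dx$ up to a remainder dominated by $\tfrac{C}{r}\int_{B_r'}|y|\big(|\widetilde h|+|\nabla\widetilde h|\big)|\Tr W|^2\,dx$. Subtracting $\tfrac{N-2s}{r}E(r)$, the term $\tfrac{N-2s}{r}\int_{B_r^+}t^{1-2s}\widetilde A\nabla W\cdot\nabla W\,dz$ cancels, the two contributions involving $\widetilde h$ combine into the single remainder $-\tfrac{2s\kappa_{N,s}}{r}\int_{B_r'}\widetilde h\,|\Tr W|^2\,dx$, and one is left with
\begin{equation*}
  r^{N-2s}D'(r)=2\int_{S_r^+}t^{1-2s}\frac{|\widetilde A\nabla W\cdot\nu|^2}{\mu}\,dS+\mathcal R(r),
\end{equation*}
where $\mathcal R(r)$ is the sum of $\tfrac1r\int_{B_r^+}O(|z|)\,t^{1-2s}|\nabla W|^2\,dz$, of the $O(|y|)$ boundary remainder, and of $-\tfrac{2s\kappa_{N,s}}{r}\int_{B_r'}\widetilde h\,|\Tr W|^2\,dx$.

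It remains to estimate $\mathcal R(r)$. For the first summand I would use $|z|\le r$ on $B_r^+$ to bound it by $C\int_{B_r^+}t^{1-2s}|\nabla W|^2\,dz$. For the two boundary summands I would apply \eqref{ineq-found}, to $|\widetilde h|$ and to $C|y|\big(|\widetilde h|+|\nabla\widetilde h|\big)$ respectively, noting that by \eqref{eta-f} and \eqref{htildedovesta} one has $\eta_{|\widetilde h|}(r)=O\big(r^{\frac{4s^2\e}{N+2s\e}}\big)$, while the factor $|y|\le r$ gains an extra power of $r$ in the second case, and using $\mu\ge\tfrac12$ (see \eqref{mu-estimates}) to recognise $\tfrac1r\int_{S_r^+}t^{1-2s}W^2\,dS\le 2r^{N-2s}H(r)$; after possibly shrinking $r_0$ so that $r^{-1+\frac{4s^2\e}{N+2s\e}}\ge1$ on $(0,r_0]$ (the exponent is negative), this makes each summand of $\mathcal R(r)$ bounded by $C\,r^{-1+\frac{4s^2\e}{N+2s\e}}\big(\int_{B_r^+}t^{1-2s}|\nabla W|^2\,dz+r^{N-2s}H(r)\big)$. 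Finally \eqref{ineq-nabla-E} and \eqref{etapiccolo} give $r^{2s-N}\int_{B_r^+}t^{1-2s}|\nabla W|^2\,dz\le 4D(r)+(N-2s)H(r)$; since the left hand side is nonnegative, this forces $D(r)+\tfrac{N-2s}{2}H(r)\ge\tfrac{N-2s}{4}H(r)\ge0$, so that both $r^{2s-N}\int_{B_r^+}t^{1-2s}|\nabla W|^2\,dz$ and $H(r)$ are controlled by a fixed multiple of $D(r)+\tfrac{N-2s}{2}H(r)$; combining this with the identity $r^{N-2s}D'(r)=2\int_{S_r^+}t^{1-2s}\frac{|\widetilde A\nabla W\cdot\nu|^2}{\mu}\,dS+\mathcal R(r)$ and dividing by $r^{N-2s}$ gives \eqref{eq-D'}. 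Since $\int_{S_r^+}t^{1-2s}|\widetilde A\nabla W\cdot\nu|^2\,dS\le C\int_{S_r^+}t^{1-2s}|\nabla W|^2\,dS$ and the latter belongs to $L^1_{\rm loc}((0,r_0])$ by the regularity \eqref{eq:regularity} and the Coarea Formula, both sides of \eqref{eq-D'} are locally integrable and the identity holds in the sense of distributions as well.

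The step I expect to be the main obstacle is the bookkeeping in the Pohozaev substitution: one has to track all the $O(|z|)$ and $O(|y|)$ perturbations hidden in $\dive(\beta)$, $J_\beta$, $d\widetilde A$, $\widetilde\alpha/\mu$, $\dive(\beta')$ and $\beta'$, check that the principal parts conspire to produce exactly the coefficient $N-2s$ in front of the Dirichlet energy so that it cancels against $\tfrac{N-2s}{r}E(r)$, and verify that the mismatch between $N$ and $N-2s$ in the terms containing $\widetilde h$ leaves only the harmless remainder $-\tfrac{2s\kappa_{N,s}}{r}\int_{B_r'}\widetilde h\,|\Tr W|^2\,dx$; this is where \eqref{ineq-found}, \eqref{eta-f} and the smallness condition \eqref{etapiccolo} are indispensable, and where the exponent $\tfrac{4s^2\e}{N+2s\e}$ appearing in \eqref{eq-D'} originates.
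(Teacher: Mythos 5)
Your proposal is correct and follows essentially the same route as the paper: differentiate $D$ via the Coarea Formula, substitute the Pohozaev-type identity \eqref{eq-Poho-identity}, observe that the principal parts of $\dive(\beta)$, $J_\beta$, $d\widetilde A\,\nabla W\,\nabla W\cdot\beta$ and $(1-2s)\widetilde\alpha/\mu$ cancel the Dirichlet energy term exactly, and absorb the remaining bulk $O(|z|)$ and boundary $\widetilde h$-terms into $O\big(r^{-1+\frac{4s^2\e}{N+2s\e}}\big)\big[D(r)+\frac{N-2s}{2}H(r)\big]$ by means of \eqref{ineq-found}, \eqref{eta-f}, \eqref{ineq-nabla-E} and \eqref{etapiccolo}. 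The only difference is organizational (you work with $E(r)=r^{N-2s}D(r)$ and cancel the coefficient $N-2s$ against the subtraction, while the paper gets the cancellation $2s-N+(N+1)-2+(1-2s)=0$ directly), which does not change the substance.
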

\begin{proof}
  By the Coarea Formula $D \in W^{1,1}_{\mathrm{loc}}((0,r_0])$ and
\begin{multline}\label{eq:17}
  D'(r)=	(2s-N)r^{2s-N-1} \left(\int_{B_r^+} t^{1-2s}\widetilde
    A
    \nabla W \cdot \nabla W \, dz -\kappa_{N,s}
    \int_{B_r'} \widetilde h |\Tr (W)|^2 \, dx \right)\\
  +r^{2s-N} \left(\int_{S_r^+} t^{1-2s}\widetilde A
    \nabla W \cdot \nabla W \, dS
    -\kappa_{N,s} \int_{S_r'} \widetilde h |\Tr( W)|^2 \, dS'\right)
\end{multline} 
a.e. and in the sense of distributions in $(0,r_0)$.  Using
\eqref{eq-Poho-identity} to estimate the second term on the right hand
side of \eqref{eq:17}, we have that, for a.e. $r \in (0,r_0)$,
\begin{align}\label{eq:18}
  &D'(r)=(2s-N)r^{2s-N-1} \left(\int_{B_r^+} t^{1-2s}\widetilde A
    \nabla W \cdot \nabla W \, dz -\kappa_{N,s} \int_{B_r'}
    \widetilde h |\Tr (W)|^2 \, dx \right)	\\
  &+r^{2s-N}\left( 2\int_{S_r^+}t^{1-2s}
    \frac{| \widetilde A \nabla W \cdot \nu|^2} {\mu}\,dS
    -\frac{\kappa_{N,s}}{r}\int_{B_r'}(\mathop{\rm{div_y}}(\beta')\widetilde
    h +\beta'\cdot \nabla \widetilde h)|\Tr(W)|^2 \, dy\right)  \notag \\
  &+r^{2s-N}\left(\frac{1}{r}\int_{B_r^+}t^{1-2s} \widetilde A \nabla
    W\cdot \nabla W \dive(\beta) \, dz -\frac{2}{r}
    \int_{B_r^+}t^{1-2s}J_\beta(\widetilde A \nabla W) \cdot \nabla W
    \, dz\right)
    \notag\\
  &+r^{2s-N}\left(\frac{1}{r}\int_{B_r^+}t^{1-2s} (d\widetilde A
    \nabla W \nabla W)
    \cdot \beta \, dz +\frac{1-2s}{r}\int_{B_r^+}t^{1-2s}
    \frac{\widetilde \alpha}{\mu}\widetilde  A \nabla W
    \cdot \nabla W \, dz\right)\notag.
\end{align}
Furthermore, thanks to point ii) of Proposition \ref{diffeomorphism},
\eqref{alpha}, \eqref{ellipticity}, \eqref{mu-estimates},
\eqref{mu-O}, \eqref{beta-estimate}, \eqref{jacob-beta-estimates}, and 
\eqref{ineq-nabla-E}, we deduce that
\begin{align} \label{eq:D'-estimates-1}
  &r^{2s-N-1}\!\!\int_{B_r^+} \!t^{1-2s}\left[\left(2s-N+\dive(\beta)+(1-2s)
    \tfrac{\widetilde\alpha}{\mu}\right) \widetilde A \nabla W \cdot
    \nabla W-2J_\beta(\widetilde  A \nabla W) \cdot \nabla W\right] dz\\
&  + r^{2s-N-1}\int_{B_r^+}t^{1-2s}(d \widetilde A\, \nabla W\, \nabla W)\cdot\beta \, dz = O(r)\,r^{2s-N-1}\int_{B_r^+} t^{1-2s} |\nabla W|^2 \, dz \notag\\
&= O(1)\left[D(r)+\frac{N-2s}{2}H(r)\right] \text{ as } r \to 0^+, \notag
\end{align}
where we used also the fact that
$d \widetilde A\,\nabla W\,\nabla W=O(1)|\nabla W|^2$ as $r \to 0^+$
by \eqref{dA} and \eqref{everything-bounded}.

In addition, recalling that
$\tilde{h}\in W^{1,\frac{N}{2s}+\varepsilon}(B'_{r_1})$, from
\eqref{ineq-found}, \eqref{eta-f}, \eqref{everything-bounded} and
\eqref{ineq-nabla-E} it follows that
\begin{multline}\label{eq:D'-estimates-2}
  r^{2s-N-1}\int_{B_r'}[(2s-N+\mathop{\rm{div_y}}(\beta'))\widetilde h
  +\beta'\cdot\nabla \widetilde h]|\Tr(W)|^2\, dx\\
= O\left(r^{-1+\frac{4s^2 \e}{N+2s\e}}\right)\left[D(r)+\frac{N-2s}{2}H(r)\right]
\end{multline}
as $r\rightarrow 0^+$.  Combining \eqref{eq:18},
\eqref{eq:D'-estimates-1} and \eqref{eq:D'-estimates-2}, we obtain
\eqref{eq-D'}.
\end{proof}
 For every $r \in (0,r_0]$ we define the \emph{frequency function}
\begin{equation}\label{N}
	\mathcal{N}(r):=\frac{D(r)}{H(r)}.
\end{equation}
Definition \eqref{N} is well-posed thanks
to Proposition \ref{H>0}.
\begin{proposition}\label{prop-N'}
We have that  $\mathcal{N} \in W^{1,1}_{\rm loc}((0,r_0])$ and
\begin{equation}\label{ineq-N-below-estimate}
\mathcal{N}(r) >-\frac{N-2s}{2} \quad \text{for every } r \in (0,r_0].
\end{equation}
Furthermore, if $\nu(z):=\frac{z}{|z|}$ is the outer normal
vector to $B_r^+$ on  $S_r^+$ and
\begin{equation*}
  \mathcal{V}(r):=2r\frac{ \left(\int_{S_r^+}t^{1-2s}\mu W^2\,
      dS\right)
    \left(\int_{S_r^+}t^{1-2s}\frac{|A\nabla W \cdot \nu|^2}{\mu}\, dS
    \right)-\left(\int_{S_r^+}t^{1-2s}W A\nabla W \cdot \nu
      \, dS\right)^2}{\left(\int_{S_r^+}t^{1-2s}\mu W^2\, dS\right)^2},
\end{equation*}
then
\begin{equation}\label{ineq-V-positive}
\mathcal{V}(r) \ge 0 \quad \text{ for a.e. }r \in (0,r_0)
\end{equation} and, for a.e. $r\in (0,r_0)$,
\begin{equation}\label{eq-N'-estimate}
  \mathcal{N}'(r)-\mathcal{V}(r)
  =O\left(r^{-1+\frac{4s^2\e}{N+2s\e}}\right)\left[\mathcal{N}(r)
    +\frac{N-2s}{2}\right]\quad \text{as } r \to 0^+.
\end{equation}
\end{proposition}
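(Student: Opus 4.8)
The plan is to handle the four assertions of the proposition in order, the differential identity \eqref{eq-N'-estimate} being the substantial one. Since $D$ and $H$ lie in $W^{1,1}_{\rm loc}((0,r_0])$, are continuous there, and $H>0$ on $(0,r_0]$ by Proposition \ref{H>0}, the function $1/H$ is locally bounded and locally $W^{1,1}$, so $\mathcal N=D/H\in W^{1,1}_{\rm loc}((0,r_0])$ and, a.e. and in the sense of distributions,
\begin{equation}\label{plan-Nprime}
  \mathcal N'(r)=\frac{D'(r)}{H(r)}-\mathcal N(r)\,\frac{H'(r)}{H(r)}.
\end{equation}

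For \eqref{ineq-N-below-estimate} I would test \eqref{ineq-nabla-E} with $v=W$ — admissible on all of $(0,r_0]$ by \eqref{etapiccolo} — and rearrange it to
\begin{equation*}
  r^{N-2s}\Bigl(D(r)+\tfrac{N-2s}{2}H(r)\Bigr)\ge
  \tfrac{1-2\kappa_{N,s}\eta_{\tilde h}(r)}{2}\int_{B_r^+}t^{1-2s}|\nabla W|^2\,dz
  +\tfrac{N-2s}{r}\Bigl(\tfrac12-\kappa_{N,s}\eta_{\tilde h}(r)\Bigr)\int_{S_r^+}t^{1-2s}\mu W^2\,dS .
\end{equation*}
By \eqref{etapiccolo} one has $\kappa_{N,s}\eta_{\tilde h}(r)<\tfrac14$; discarding the nonnegative gradient term and recalling \eqref{H} gives $D(r)+\tfrac{N-2s}{2}H(r)\ge\tfrac{N-2s}{4}H(r)$, and since $H(r)>0$ this yields not only \eqref{ineq-N-below-estimate} but the \emph{quantitative} bound
\begin{equation}\label{plan-qlb}
  \mathcal N(r)+\tfrac{N-2s}{2}\ge\tfrac{N-2s}{4}>0\qquad\text{for all }r\in(0,r_0],
\end{equation}
which is essential below. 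Inequality \eqref{ineq-V-positive} is simply the Cauchy–Schwarz inequality in $L^2(S_r^+,t^{1-2s}\,dS)$ applied to $\sqrt{\mu}\,W$ and $\mu^{-1/2}\,\widetilde A\nabla W\cdot\nu$, giving $\bigl(\int_{S_r^+}t^{1-2s}W\,\widetilde A\nabla W\cdot\nu\,dS\bigr)^2\le\bigl(\int_{S_r^+}t^{1-2s}\mu W^2\,dS\bigr)\bigl(\int_{S_r^+}t^{1-2s}\mu^{-1}|\widetilde A\nabla W\cdot\nu|^2\,dS\bigr)$, i.e. $\mathcal V(r)\ge0$.

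For \eqref{eq-N'-estimate} I would set $I_1:=\int_{S_r^+}t^{1-2s}\mu W^2\,dS=r^{N+1-2s}H(r)$, $I_2:=\int_{S_r^+}t^{1-2s}\mu^{-1}|\widetilde A\nabla W\cdot\nu|^2\,dS$ and $I_3:=\int_{S_r^+}t^{1-2s}W\,\widetilde A\nabla W\cdot\nu\,dS$, so $\mathcal V(r)=\tfrac{2rI_2}{I_1}-\tfrac{2rI_3^2}{I_1^2}$. Dividing \eqref{eq-D'} by $H(r)$, using $r^{2s-N}/H(r)=r/I_1$ and $D(r)+\tfrac{N-2s}{2}H(r)=H(r)\bigl[\mathcal N(r)+\tfrac{N-2s}{2}\bigr]$, gives $D'(r)/H(r)=\tfrac{2rI_2}{I_1}+O\bigl(r^{-1+\frac{4s^2\e}{N+2s\e}}\bigr)\bigl[\mathcal N(r)+\tfrac{N-2s}{2}\bigr]$; \eqref{H'2} gives $H'(r)/H(r)=\tfrac{2I_3}{I_1}+O(1)$; and combining \eqref{D-as-H'} and \eqref{H'2} gives $\mathcal N(r)=\tfrac{rI_3}{I_1}+O(r)$, whence also $\tfrac{I_3}{I_1}=\tfrac{\mathcal N(r)}{r}+O(1)$. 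Plugging these into \eqref{plan-Nprime} and subtracting $\mathcal V$, the $\tfrac{2rI_2}{I_1}$ terms cancel and one is left with
\begin{equation*}
  \mathcal N'(r)-\mathcal V(r)=\frac{2I_3}{I_1}\Bigl(\frac{rI_3}{I_1}-\mathcal N(r)\Bigr)
  -\mathcal N(r)\,O(1)+O\bigl(r^{-1+\frac{4s^2\e}{N+2s\e}}\bigr)\Bigl[\mathcal N(r)+\tfrac{N-2s}{2}\Bigr].
\end{equation*}
Since $\tfrac{rI_3}{I_1}-\mathcal N(r)=O(r)$ and $\tfrac{I_3}{I_1}=\tfrac{\mathcal N(r)}{r}+O(1)$, the first term equals $O(1)\mathcal N(r)+O(r)$, so $\mathcal N'(r)-\mathcal V(r)=O(1)\mathcal N(r)+O(r)+O\bigl(r^{-1+\frac{4s^2\e}{N+2s\e}}\bigr)\bigl[\mathcal N(r)+\tfrac{N-2s}{2}\bigr]$. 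Finally, using \eqref{plan-qlb} to rewrite every bare $O(1)$ and $O(r)$ — and $O(1)\mathcal N(r)=O(1)\bigl[\mathcal N(r)+\tfrac{N-2s}{2}\bigr]+O(1)$ — in the form $O(1)\bigl[\mathcal N(r)+\tfrac{N-2s}{2}\bigr]$, and then absorbing $O(1)$ into $O\bigl(r^{-1+\frac{4s^2\e}{N+2s\e}}\bigr)$, yields \eqref{eq-N'-estimate}; the absorption is legitimate because $-1+\tfrac{4s^2\e}{N+2s\e}<0$ (equivalently $2s\e(2s-1)<N$, which holds since $\e<1$ and $N>2s>2s(2s-1)$), so $r^{-1+\frac{4s^2\e}{N+2s\e}}\to+\infty$ as $r\to0^+$.

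The main obstacle is the bookkeeping in this last step: no individual estimate is hard, but one must make sure that \emph{every} remainder is ultimately of the form $O\bigl(r^{-1+\frac{4s^2\e}{N+2s\e}}\bigr)\bigl[\mathcal N(r)+\tfrac{N-2s}{2}\bigr]$, which works only because \eqref{plan-qlb} keeps $\mathcal N(r)+\tfrac{N-2s}{2}$ bounded away from zero — so the lower bound must be proved in its quantitative form before attacking \eqref{eq-N'-estimate}. The two algebraic cancellations — the $\tfrac{2rI_2}{I_1}$ terms between $D'/H$ and $\mathcal V$, and the near-cancellation of $\tfrac{2I_3}{I_1}\bigl(\tfrac{rI_3}{I_1}-\mathcal N\bigr)$ produced by $\mathcal N\approx\tfrac{rI_3}{I_1}$ — are precisely what prevent spurious $r^{-1}$ singularities from surviving.
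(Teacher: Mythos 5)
Your proposal is correct and follows essentially the same route as the paper: $W^{1,1}_{\rm loc}$ regularity from $D$ and $1/H$, the lower bound \eqref{ineq-N-below-estimate} via \eqref{ineq-nabla-E} with $v=W$ together with \eqref{etapiccolo} (the paper's \eqref{liminf-positivo}), Cauchy--Schwarz on $S_r^+$ for \eqref{ineq-V-positive}, and the quotient-rule computation combining \eqref{eq-D'}, \eqref{H'2} and \eqref{D-as-H'} for \eqref{eq-N'-estimate}. Your only real addition is making explicit the quantitative bound $\mathcal N(r)+\frac{N-2s}{2}\ge\frac{N-2s}{4}$ used to absorb the $O(1)$ and $O(r)$ remainders, which the paper leaves implicit in the same estimates.
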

\begin{proof}
  Since $D \in W^{1,1}_{\rm loc}((0,r_0])$ and
  $\frac{1}{H}\in W^{1,1}_{\rm loc}((0,r_0])$ by Proposition
  \ref{derivataH} and Proposition \ref{H>0}, then
  $\mathcal{N}\in W^{1,1}_{\rm loc}((0,r_0])$. Furthermore we recall
  that \eqref{ineq-nabla-E} holds for every $r\in (0,r_1]$, thus
\begin{equation}\label{liminf-positivo}
\mathcal{N}(r)\ge-\kappa_{N,s}(N-2s) \eta_{\tilde{h}}(r),
\end{equation}
for every $r\in (0,r_0]$ and, in virtue of this,
\eqref{ineq-N-below-estimate} directly follows from
\eqref{etapiccolo}.  Moreover \eqref{ineq-V-positive} is a consequence
of the Cauchy-Schwarz inequality in $L^2(S_r^+,t^{1-2s})$.  From
\eqref{H'2}, \eqref{D-as-H'} and \eqref{eq-D'} we deduce that
\begin{align}\label{eq:N'-estimates-1}
  \mathcal{N}'(r)=&\frac{D'(r)H(r)-D(r)H'(r)}{(H(r))^2}
  =\frac{D'(r)H(r)-\frac{r}{2}(H'(r))^2+O(r)H(r)H'(r)}{(H(r))^2}\\
\notag  =& \mathcal{V}(r)+O(r)+
           O(r^{-1+\frac{4s^2\e}{N+2s\e}})\left[\mathcal{N}(r)
           +\frac{N-2s}{2}\right]\\
 \notag &\ +
  \frac{O(r^{-N+2s})}{H(r)}\int_{S^+_r}t^{1-2s}(A\nabla W\cdot \nu)W\,
  dS
\end{align}
as $r\rightarrow 0^+$.  In order to deal with the last term in
\eqref{eq:N'-estimates-1}, we observe that, for a.e. $r\in (0,r_0)$,
\begin{equation*}
  \int_{S^+_r}t^{1-2s}(A\nabla W\cdot \nu)W\, dS=r^{N-2s}D(r)
  +H(r)O(r^{N+1-2s})\quad \text{as $r\rightarrow 0^+$},
\end{equation*}
in virtue of \eqref{H'2} and \eqref{D-as-H'}. 
Thus, substituting into \eqref{eq:N'-estimates-1}, we conclude that
\begin{equation}
  \mathcal{N}'(r) = \mathcal{V}(r) +
  O(r^{-1+\frac{4s^2\e}{N+2s\e}})\left[\mathcal{N}(r)+\frac{N-2s}{2}
  \right]\quad \text{as $r\rightarrow 0^+$},
\end{equation}
where we have used that $\frac{4s^2\e}{N+2s\e}<1$ since
$\varepsilon\in(0,1)$ and $N>2s$. Estimate
\eqref{eq-N'-estimate} is thereby proved.
\end{proof}

\begin{proposition}
There exists a constant $C >0$ such that, for every $r \in (0,r_0]$, 
\begin{equation}\label{ineq-N-bounded}
\mathcal{N}(r) \le C.
\end{equation}
\end{proposition}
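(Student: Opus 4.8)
The plan is to run a Gronwall-type argument on the frequency function, combining the differential estimate \eqref{eq-N'-estimate} with the nonnegativity \eqref{ineq-V-positive} of $\mathcal V$. Set $\delta:=\frac{4s^2\e}{N+2s\e}$, which is positive (in fact $\delta\in(0,1)$, as already noted) since $\e\in(0,1)$ and $N>2s$, and introduce the shifted frequency $\Lambda(r):=\mathcal N(r)+\frac{N-2s}{2}$. The first point is that $\Lambda$ is bounded away from zero on $(0,r_0]$: indeed, by \eqref{liminf-positivo} and \eqref{etapiccolo},
\[
  \mathcal N(r)\ge-\kappa_{N,s}(N-2s)\,\eta_{\tilde{h}}(r)>-\frac{N-2s}{4}\qquad\text{for every }r\in(0,r_0],
\]
hence $\Lambda(r)>\frac{N-2s}{4}>0$ on $(0,r_0]$. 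In particular, since $\mathcal N\in W^{1,1}_{\rm loc}((0,r_0])$ by Proposition \ref{prop-N'}, the function $\log\Lambda$ is well defined and belongs to $W^{1,1}_{\rm loc}((0,r_0])$.

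By \eqref{eq-N'-estimate}, \eqref{ineq-V-positive}, and the positivity of $\Lambda$, there exist a constant $C_1>0$ and a radius $r_1\in(0,r_0]$ such that, for a.e. $r\in(0,r_1)$,
\[
  (\log\Lambda)'(r)=\frac{\mathcal N'(r)}{\Lambda(r)}
  =\frac{\mathcal V(r)}{\Lambda(r)}+O\!\bigl(r^{-1+\delta}\bigr)\ge-C_1\,r^{-1+\delta}.
\]
Integrating this inequality over $[r,r_1]$ for $0<r<r_1$, and using that $\rho\mapsto\rho^{-1+\delta}$ is integrable near the origin because $\delta>0$, we obtain
\[
  \log\Lambda(r_1)-\log\Lambda(r)\ge-C_1\int_r^{r_1}\rho^{-1+\delta}\,d\rho\ge-\frac{C_1}{\delta}\,r_1^{\delta},
\]
so that $\Lambda(r)\le\Lambda(r_1)\exp\!\bigl(\tfrac{C_1}{\delta}r_1^{\delta}\bigr)$, and therefore
\[
  \mathcal N(r)\le\Lambda(r_1)\exp\!\Bigl(\tfrac{C_1}{\delta}r_1^{\delta}\Bigr)-\frac{N-2s}{2}\qquad\text{for every }r\in(0,r_1].
\]
On the compact interval $[r_1,r_0]$ the function $\mathcal N$ is continuous, being in $W^{1,1}_{\rm loc}((0,r_0])$, hence bounded there; combining this with the previous bound on $(0,r_1]$ yields \eqref{ineq-N-bounded} for a suitable $C>0$.

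There is essentially no hard step left here, since the substantial work has already gone into \eqref{eq-N'-estimate} and \eqref{ineq-V-positive}: the only points requiring a little care are that $\Lambda$ must stay bounded away from zero — which is exactly where the smallness condition \eqref{etapiccolo} enters and makes $\log\Lambda$ well behaved — and the integrability of $r^{-1+\delta}$ at the origin, which is ultimately guaranteed by the extra summability $h\in W^{1,\frac{N}{2s}+\e}(\Omega)$.
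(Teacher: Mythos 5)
Your proof is correct and follows essentially the same route as the paper: from \eqref{ineq-V-positive} and \eqref{eq-N'-estimate} one gets the differential inequality $\bigl(\log(\mathcal N+\tfrac{N-2s}{2})\bigr)'\ge -c\,r^{-1+\frac{4s^2\e}{N+2s\e}}$ near $0$, which is integrated thanks to the positivity of the exponent, and the bound is completed on $[r_1,r_0]$ by continuity of $\mathcal N$. Your use of \eqref{liminf-positivo} and \eqref{etapiccolo} to get the uniform lower bound $\mathcal N+\tfrac{N-2s}{2}>\tfrac{N-2s}{4}$ is a slight (harmless) strengthening of the paper's appeal to \eqref{ineq-N-below-estimate}.
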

\begin{proof}
  From \eqref{ineq-V-positive} and \eqref{eq-N'-estimate} we deduce
  that there exists a constant $c >0$ such that
\begin{equation}\label{eq119}
  \left(\mathcal{N}(r)+\frac{N-2s}{2}\right)' \ge
  -c\, r^{-1+\frac{4s^2\e}{N+2s\e}}\left(\mathcal{N}(r)
    +\frac{N-2s}{2}\right) \quad \text {for a.e. }r \in (0,r_1),
\end{equation}
for some $r_1\in(0,r_0)$ sufficiently small.  Hence, thanks to
\eqref{ineq-N-below-estimate}, we are allowed to divide each member of
\eqref{eq119} by $\mathcal{N}(r)+\frac{N-2s}{2}$, obtaining that
\begin{equation}
  \left(\log\left(\mathcal{N}(r)+\frac{N-2s}{2}\right)\right)'
  \ge -c\, r^{-1+\frac{4s^2\e}{N+2s\e}}  \quad \text {for a.e. }r \in (0,r_1).
\end{equation}
Then, integrating over $(r,r_1)$ with $r<r_1$, we have that 
\begin{equation}
  \mathcal{N}(r) \le -\frac{N-2s}{2}+
  \exp\left(c\,\frac{N+2s\e}{4s^2\e}r_1^{\frac{4s^2\e}{N+2s\e}}\right)
  \left(\mathcal{N}(r_1)+\frac{N-2s}{2} \right)
  \quad \text {for every }r \in (0,r_1),
\end{equation}
which proves \eqref{ineq-N-bounded}, taking into account the
  continuity of $\mathcal N$ in $(0,r_0]$.
\end{proof}
\begin{proposition}\label{prop-limit-N}
There exists the limit
\begin{equation}\label{limit-N}
\gamma:=\lim_{r \to 0^+} \mathcal{N}(r).
\end{equation}
Moreover $\gamma $ is finite and $\gamma \ge 0$.
\end{proposition}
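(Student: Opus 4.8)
The plan is to package the differential inequality \eqref{eq119} together with the two-sided bounds \eqref{ineq-N-below-estimate} and \eqref{ineq-N-bounded} into a monotone auxiliary quantity. Set $f(r):=\mathcal N(r)+\frac{N-2s}{2}$, which is strictly positive on $(0,r_0]$ by \eqref{ineq-N-below-estimate}, and introduce the weight
\[
  \Phi(r):=\exp\left(c\,\frac{N+2s\e}{4s^2\e}\,r^{\frac{4s^2\e}{N+2s\e}}\right),
\]
where $c>0$ is the constant in \eqref{eq119}, so that $\Phi$ is increasing, $\Phi'(r)/\Phi(r)=c\,r^{-1+\frac{4s^2\e}{N+2s\e}}$, and $\Phi(r)\to1$ as $r\to0^+$. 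The fact that $\Phi$ is well defined and $\Phi(0^+)=1$ rests precisely on the integrability near $0$ of $r^{-1+\frac{4s^2\e}{N+2s\e}}$, i.e. on $\e\in(0,1)$ and $N>2s$, which also makes the remainder in \eqref{eq-N'-estimate} integrable near the origin. Using \eqref{eq119} one then checks that $(\Phi f)'=\Phi\,\bigl(f'+\tfrac{\Phi'}{\Phi}f\bigr)\ge0$ a.e. on $(0,r_1)$, so $r\mapsto\Phi(r)f(r)$ is non-decreasing on $(0,r_1)$.

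Next, a non-decreasing function on $(0,r_1)$ always admits a limit as $r\to0^+$; since $\Phi f$ is positive and, by \eqref{ineq-N-bounded} together with $\Phi(r)\le\Phi(r_1)$, bounded above, this limit $L:=\lim_{r\to0^+}\Phi(r)f(r)$ is finite and non-negative. Because $\Phi(r)\to1$, we obtain $f(r)=\Phi(r)^{-1}\bigl(\Phi(r)f(r)\bigr)\to L$, hence $\gamma:=\lim_{r\to0^+}\mathcal N(r)$ exists and equals $L-\frac{N-2s}{2}\in\R$, which gives both the existence and the finiteness of $\gamma$.

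For the sign $\gamma\ge0$ I would invoke \eqref{liminf-positivo}, namely $\mathcal N(r)\ge-\kappa_{N,s}(N-2s)\,\eta_{\widetilde h}(r)$: since the exponent $\frac{4s^2\e}{N+2s\e}$ appearing in the definition \eqref{eta-f} of $\eta_{\widetilde h}$ is positive and $\norm{\widetilde h}_{L^{\frac{N}{2s}+\e}(B_r')}$ stays bounded as $r\to0^+$, we have $\eta_{\widetilde h}(r)\to0$; letting $r\to0^+$ in \eqref{liminf-positivo} yields $\gamma\ge0$.

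None of these steps is genuinely hard; the argument is essentially a repackaging of \eqref{eq119}, \eqref{ineq-N-below-estimate}, \eqref{ineq-N-bounded} and \eqref{liminf-positivo}. The only points deserving a word of care are the verification that $\Phi$ is well defined with $\Phi(0^+)=1$ — equivalently the integrability near $0$ of $r^{-1+\frac{4s^2\e}{N+2s\e}}$ — and the observation that $\eta_{\widetilde h}(r)\to0$, both of which are already implicit in the estimates established above.
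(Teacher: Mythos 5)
Your proof is correct and takes essentially the same route as the paper: both turn the differential inequality \eqref{eq119} into monotonicity of an auxiliary quantity — you via the integrating factor $\Phi(r)=\exp\bigl(c\,\tfrac{N+2s\e}{4s^2\e}r^{\frac{4s^2\e}{N+2s\e}}\bigr)$ applied to $\mathcal N+\tfrac{N-2s}{2}$, the paper by first using $\mathcal N\le C$ from \eqref{ineq-N-bounded} and then adding the explicit primitive $c\bigl(\tfrac{N-2s}{2}+C\bigr)\tfrac{N+2s\e}{4s^2\e}r^{\frac{4s^2\e}{N+2s\e}}$ — and then conclude existence and finiteness of $\gamma$ from \eqref{ineq-N-below-estimate}, \eqref{ineq-N-bounded}, with $\gamma\ge0$ from \eqref{liminf-positivo} and $\eta_{\widetilde h}(r)\to0$. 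The only cosmetic difference is that your integrating-factor version makes the upper bound \eqref{ineq-N-bounded} logically redundant for the existence of the limit (positivity of $\Phi\,(\mathcal N+\tfrac{N-2s}{2})$ already suffices), which is a harmless simplification.
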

\begin{proof}
Combining \eqref{ineq-N-bounded} and \eqref{eq119}, we infer that 
\begin{equation}\label{eq6}
  \left(\mathcal{N}(r)+\frac{N-2s}{2}\right)' \ge -
  c\,r^{-1+\frac{4s^2\e}{N+2s\e}}  \left(C+\frac{N-2s}{2}\right) 
\end{equation}
for a.e. $r \in (0,r_1)$, hence
\begin{equation}
  \left(\frac{N-2s}{2}+\mathcal{N}(r)+c\,
    \left(\frac{N-2s}{2}+C\right)\frac{N+2s\e}{4s^2\e}
    r^{\frac{4s^2\e}{N+2s\e}}\right)' \ge 0  \quad
  \text {for a.e. }r \in (0,r_1).
\end{equation}
From this, it follows in particular that the limit $\gamma$ 
in \eqref{limit-N} exists. Moreover, by \eqref{ineq-N-below-estimate}
and \eqref{ineq-N-bounded} $\gamma$ is finite, whereas
\eqref{liminf-positivo} implies that $\gamma \ge 0$.
\end{proof}
\begin{proposition}\label{l:doub}
There exist $c_0,\bar c>0$ and $\bar r\in(0,r_0)$ such that 
\begin{equation}\label{ineq-H-upper-estimate}
H(r)\le c_0\, r^{2 \gamma} \quad \text{for all } r \in (0,r_0]
\end{equation}
and
  \begin{equation}\label{eq:doubling}
    H(Rr)\leq R^{\bar c} \,H(r)\quad \text{for all }
    R\geq 1\text{ and }r \in \big(0,\tfrac{\bar r}R\big].
  \end{equation}
Furthermore, for any $\sigma >0$ there exists a constant $c_\sigma>0$
such that
\begin{equation}\label{ineq-H-lower-estimate}
  H(r)\ge c_\sigma r^{2 \gamma+\sigma} \quad
  \text{for all } r \in (0,r_0].
\end{equation}
\end{proposition}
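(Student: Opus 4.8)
The plan is to read off all three estimates from a single first-order differential inequality for $\log H$. Since $H\in W^{1,1}_{\rm loc}((0,r_0])$ by Proposition \ref{derivataH} and $H>0$ is continuous by Proposition \ref{H>0}, the function $\log H$ belongs to $W^{1,1}_{\rm loc}((0,r_0])$; dividing \eqref{D-as-H'} by $H(r)$ and recalling the definition \eqref{N} of $\mathcal N$ we obtain
\begin{equation*}
  \bigl(\log H\bigr)'(r)=\frac{H'(r)}{H(r)}=\frac{2\,\mathcal N(r)}{r}+O(1)
  \qquad\text{as }r\to0^+ .
\end{equation*}
Each of \eqref{ineq-H-upper-estimate}, \eqref{eq:doubling} and \eqref{ineq-H-lower-estimate} then follows by integrating this identity over a suitable interval, after inserting the information on $\mathcal N$ collected in Propositions \ref{prop-N'}--\ref{prop-limit-N}; throughout we fix $\bar r\in(0,r_0)$ small enough that all the asymptotic estimates above hold on $(0,\bar r]$.

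For \eqref{ineq-H-upper-estimate} I would first upgrade the convergence $\mathcal N(r)\to\gamma$ to a rate: exactly as in the proof of Proposition \ref{prop-limit-N}, inequality \eqref{eq119} shows that $r\mapsto\mathcal N(r)+\tfrac{N-2s}{2}+C_1\,r^{4s^2\e/(N+2s\e)}$ is non-decreasing near $0$ for a suitable $C_1>0$, whence $\mathcal N(r)\ge\gamma-C_1\,r^{4s^2\e/(N+2s\e)}$. Inserting this into the logarithmic derivative and integrating over $(r,\bar r)$ gives $\log H(r)\le 2\gamma\log r+\mathrm{const}$ on $(0,\bar r]$, i.e. \eqref{ineq-H-upper-estimate} once $c_0$ is enlarged to cover $[\bar r,r_0]$ by continuity and positivity of $H$. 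For \eqref{eq:doubling} only the crude bound $\mathcal N\le C$ from \eqref{ineq-N-bounded} is needed: for $\rho\in(0,\bar r]$ one has $\tfrac{2\mathcal N(\rho)}{\rho}+O(1)\le\tfrac{2C+C_2\bar r}{\rho}$ for a suitable $C_2>0$, so integrating over $(r,Rr)$ with $Rr\le\bar r$ yields $\log H(Rr)-\log H(r)\le(2C+C_2\bar r)\log R$, which is \eqref{eq:doubling} with $\bar c:=2C+C_2\bar r$. For \eqref{ineq-H-lower-estimate}, given $\sigma>0$, the convergence $\mathcal N(r)\to\gamma$ and the boundedness of the $O(1)$ term provide $r_\sigma\in(0,\bar r]$ with $\tfrac{2\mathcal N(r)}{r}+O(1)\le\tfrac{2\gamma+\sigma}{r}$ on $(0,r_\sigma]$; integrating over $(r,r_\sigma)$ gives $\log H(r)\ge(2\gamma+\sigma)\log r+\mathrm{const}$ on $(0,r_\sigma]$, and the full range $(0,r_0]$ is recovered by adjusting $c_\sigma$ using continuity and positivity of $H$ on $[r_\sigma,r_0]$.

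The one genuinely delicate point is \eqref{ineq-H-upper-estimate}: unlike the other two bounds it requires the sharp exponent $2\gamma$ rather than the crude $2C$, hence the quantitative decay $\mathcal N(r)-\gamma=O\bigl(r^{4s^2\e/(N+2s\e)}\bigr)$, which in turn rests on the integrability near $0$ of the remainder $r^{-1+4s^2\e/(N+2s\e)}$ appearing in \eqref{eq-N'-estimate}, that is on $N>2s$ and $\e>0$. The doubling property and the lower bound are comparatively soft, being consequences respectively of $\mathcal N$ being bounded above and of the mere existence of the limit $\gamma$.
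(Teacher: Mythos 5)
Your proposal is correct and follows essentially the same route as the paper: the logarithmic-derivative identity $H'/H=\tfrac{2}{r}\mathcal N(r)+O(1)$ from \eqref{D-as-H'}, the quantitative bound $\mathcal N(r)\ge\gamma-C_1 r^{4s^2\e/(N+2s\e)}$ coming from \eqref{eq6} (equivalently, from the monotone corrected frequency of Proposition \ref{prop-limit-N}) for the sharp upper bound, the crude bound $\mathcal N\le C$ for the doubling inequality, and $\mathcal N(r)\to\gamma$ for the lower bound, each followed by integration and a continuity argument on $[\bar r,r_0]$. The only cosmetic remark is that the non-decreasing character of $r\mapsto\mathcal N(r)+\tfrac{N-2s}{2}+C_1 r^{4s^2\e/(N+2s\e)}$ uses \eqref{eq119} together with \eqref{ineq-N-bounded}, i.e. \eqref{eq6}, exactly as the paper does.
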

\begin{proof}
By \eqref{limit-N} we have that $\mathcal N(r)=\gamma+\int_0^r
  \mathcal N'(t)\,dt$; hence from \eqref{D-as-H'} it follows that 
\begin{equation}\label{eq:2}
  \frac{H'(r)}{H(r)}=\frac{2}{r}\mathcal{N}(r)+O(1)=
  \frac{2}{r}\int_0^r \mathcal{N}'(t) \, dt +\frac{2\gamma}{r}+O(1).
\end{equation}
From \eqref{eq6} and up to choosing $r_1$ smaller, it follows that, for
a.e. $r\in (0,r_1)$,
\begin{equation*}
\frac{H'(r)}{H(r)}\ge -\kappa r^{-1+\frac{4s^2\e}{N+2s\e}}+\frac{2\gamma}{r}
\end{equation*}
for some positive constant $\kappa>0$. 
Then an integration over $(r,r_1)$ yields
\begin{equation*}
  \log\left(\frac{H(r_1)}{H(r)}\right) \ge-
  \kappa\frac{N+2s\e}{4s^2\e} \left(r_1^{\frac{4s^2\e}{N+2s\e}}
    -r^{\frac{4s^2\e}{N+2s\e}}\right)+ \log\left(\frac{r_1}{r}
  \right)^{2\gamma}
\end{equation*}
and thus
\begin{equation*}
  H(r) \le \frac{H(r_1)}{r_1^{2 \gamma}}
  \exp\left(\kappa\frac{N+2s\e}{4s^2\e}
    r_1^{\frac{4s^2\e}{N+2s\e}}\right) r^{2 \gamma}
\end{equation*}
for all $r \in (0,r_1]$, thus implying
  \eqref{ineq-H-upper-estimate} thanks to the continuity of $H$ in
  $(0,r_0]$.

To prove \eqref{eq:doubling}, we observe that \eqref{eq:2} and
  \eqref{ineq-N-bounded} imply that, for some $\bar r\in(0,r_0)$ and
  $\bar c>0$,
\begin{equation}
  \frac{H'(r)}{H(r)}\leq \frac{\bar c}{r}
  \quad\text{for all }r\in(0,\bar r), 
\end{equation}
whose integration over $(r,rR)$ directly gives \eqref{eq:doubling}.

In view of Proposition \ref{prop-limit-N}, for any
$\sigma >0$ there exists $r_\sigma \in (0,r_0]$ such that
\begin{equation*}
  \frac{H'(r)}{H(r)}=\frac{2}{r}\mathcal{N}(r)+O(1)
  \le \frac{2 \gamma +\sigma}{r} \quad \text{for all } r \in (0,r_\sigma].
\end{equation*}
Integrating over $(r,r_\sigma)$ and recalling that $H$ is
  continuous in $(0,r_0]$, we deduce \eqref{ineq-H-lower-estimate}.
\end{proof}

\begin{proposition}\label{prop-limit-H}
There exists the limit 
$\lim_{r \to 0^+} r^{-2\gamma}H(r)$ and it is finite.
\end{proposition}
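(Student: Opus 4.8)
The plan is to show that $r\mapsto r^{-2\gamma}H(r)$ coincides, up to an additive absolutely continuous term that vanishes at $0$, with a monotone function, and then to combine this with the upper bound \eqref{ineq-H-upper-estimate}. First I would establish a one-sided control on $\mathcal N(r)-\gamma$. Since $\mathcal N\in W^{1,1}_{\rm loc}((0,r_0])$ and, by \eqref{eq-N'-estimate}, $\mathcal N'=\mathcal V+O\big(r^{-1+\frac{4s^2\e}{N+2s\e}}\big)\big[\mathcal N+\tfrac{N-2s}2\big]$, the boundedness of $\mathcal N$ from \eqref{ineq-N-below-estimate} and \eqref{ineq-N-bounded}, together with $\tfrac{4s^2\e}{N+2s\e}>0$, shows that $\mathcal N'-\mathcal V\in L^1(0,\bar r)$ for $\bar r$ small; since $\mathcal V\ge0$ by \eqref{ineq-V-positive}, this forces $\mathcal N'\in L^1(0,\bar r)$, so that $\mathcal N(r)-\gamma=\int_0^r\mathcal N'(t)\,dt$. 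Dropping the nonnegative term $\int_0^r\mathcal V(t)\,dt$ then gives, for some constant $C_1>0$ and $r$ small,
\begin{equation*}
  \mathcal N(r)-\gamma\ \ge\ \int_0^r\big(\mathcal N'(t)-\mathcal V(t)\big)\,dt\ \ge\ -C_1\,r^{\frac{4s^2\e}{N+2s\e}}.
\end{equation*}

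Next I would differentiate. As $H\in W^{1,1}_{\rm loc}((0,r_0])$, so is $r\mapsto r^{-2\gamma}H(r)$, and combining \eqref{D-as-H'} with the definition \eqref{N} of $\mathcal N$ gives, almost everywhere and in the sense of distributions,
\begin{equation*}
  \frac{d}{dr}\big(r^{-2\gamma}H(r)\big)
  =r^{-2\gamma-1}\big(rH'(r)-2\gamma H(r)\big)
  =2\big(\mathcal N(r)-\gamma\big)r^{-2\gamma-1}H(r)+O(1)\,r^{-2\gamma}H(r)
\end{equation*}
as $r\to0^+$. Inserting the lower bound above together with the upper bound $r^{-2\gamma}H(r)\le c_0$ from \eqref{ineq-H-upper-estimate}, and using that $r^{-2\gamma-1}H(r)>0$ by Proposition \ref{H>0}, I obtain
\begin{equation*}
  \frac{d}{dr}\big(r^{-2\gamma}H(r)\big)\ \ge\ -\psi(r),\qquad
  \psi(r):=c_0\big(2C_1\,r^{-1+\frac{4s^2\e}{N+2s\e}}+C_2\big),
\end{equation*}
with $C_2>0$, where $\psi\ge0$ is integrable near $0$ since $\tfrac{4s^2\e}{N+2s\e}>0$.

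Finally, setting $\Psi(r):=\int_0^r\psi(t)\,dt$, which is continuous, nondecreasing and satisfies $\Psi(r)\to0$ as $r\to0^+$, the function $r\mapsto r^{-2\gamma}H(r)+\Psi(r)$ is nondecreasing on a right neighbourhood of $0$; being bounded below by $0$, it has a finite limit as $r\to0^+$. Since $\Psi(r)\to0$, the limit $\lim_{r\to0^+}r^{-2\gamma}H(r)$ exists and lies in $[0,c_0]$, so in particular it is finite. I expect the only genuinely delicate point to be the justification that $\mathcal N'\in L^1$ near $0$, so that the representation $\mathcal N(r)-\gamma=\int_0^r\mathcal N'$ is valid; this is precisely where the sign $\mathcal V\ge0$ from \eqref{ineq-V-positive} is essential, while everything else is a routine assembly of the estimates already established.
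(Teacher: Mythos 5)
Your proof is correct and follows essentially the same route as the paper's: both rest on the sign condition $\mathcal V\ge 0$ from \eqref{ineq-V-positive}, the integrability near $0$ of $\mathcal N'-\mathcal V$ coming from \eqref{eq-N'-estimate} together with the boundedness of $\mathcal N$, the identity \eqref{D-as-H'} for $H'$, and the upper bound \eqref{ineq-H-upper-estimate}. The only difference is presentational: the paper integrates the expression for $\big(r^{-2\gamma}H(r)\big)'$ and splits it into a monotone term involving $\int_0^\rho\mathcal V$ plus an $L^1$ term, whereas you discard the nonnegative $\int_0^r\mathcal V$ contribution to obtain the one-sided differential inequality $\big(r^{-2\gamma}H\big)'\ge-\psi$ with $\psi\in L^1$ and conclude via monotonicity of $r^{-2\gamma}H+\Psi$.
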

\begin{proof}
  By \eqref{ineq-H-upper-estimate} it is sufficient to show that the
  limit does exist.
  In view of \eqref{D-as-H'} we have that
  \begin{align*}
    &\left(\frac{H(r)}{r^{2\gamma}}\right)'=\frac{r^{2\gamma}H'(r)-2
      \gamma r^{2 \gamma -1}H(r)}{r^{4\gamma}}=2r^{-2\gamma-1}
      (D(r)-\gamma H(r))+r^{-2\gamma}O(1)H(r)\\
    &=2r^{-2\gamma-1}H(r)\left(\mathcal{N}(r)-\gamma+rO(1)\right) \\
     &=2r^{-2\gamma-1}H(r)\left(\int_{0}^r\left[\mathcal{N}'(t)
             -\mathcal{V}(t)\right] \, dt +\int_{0}^r \mathcal{V} (t)
             \, dt+rO(1)\right) \notag
\end{align*}
as $r\rightarrow 0^+$.  Integrating over $(r,\tilde r)$ with $\tilde
  r\in(0,r_0)$ small, we obtain that
  \begin{align}\label{eq:extlim}
    \frac{H(\tilde r)}{\tilde r^{2\gamma}}-\frac{H(r)}{r^{2\gamma}}
      =&\int_r^{\tilde r}2\rho^{-2\gamma-1}H(\rho)
      \left(\int_{0}^\rho \mathcal{V} (t) \, dt\right) \, d\rho \\
    &\notag +\int_r^{\tilde r}\left[
      2\rho^{-2\gamma}H(\rho)O(1)+2\rho^{-2\gamma-1}
      H(\rho)\left(\int_{0}^\rho\left[\mathcal{N}'(t)
      -\mathcal{V}(t)\right] \, dt\right)\right] \, d\rho. 
\end{align}
Letting
\begin{equation*}
  f(\rho) :=2\rho^{-2\gamma}H(\rho)O(1)+2\rho^{-2\gamma-1}H(\rho)
  \left(\int_{0}^\rho\left[\mathcal{N}'(t) -\mathcal{V}(t)\right] \, dt\right), 
\end{equation*}
from \eqref{eq-N'-estimate}, \eqref{ineq-N-bounded} and
\eqref{ineq-H-upper-estimate} it follows that $f \in L^1(0,\tilde r)$ and
hence there exists the limit
\begin{equation*}
  \lim_{r \to 0^+} \int_r^{\tilde r} f(\rho) \, d\rho
  =\int_0^{\tilde r} f(\rho) \, d\rho<+\infty.
\end{equation*}
On the other hand,  in view of \eqref{ineq-V-positive}, there exists the limit 
\begin{equation*}
	\lim_{r \to 0^+} \int_r^{\tilde r}2\rho^{-2\gamma-1}H(\rho)
      \left(\int_{0}^\rho \mathcal{V} (t) \, dt\right) \, d\rho.
\end{equation*}
Therefore we can conclude thanks to \eqref{eq:extlim}. 
\end{proof}

\section{The blow-up analysis} \label{sec-blow-up-analysis}
In the present section, we aim to classify the possible vanishing
orders of solutions to \eqref{prob-extension-straith}. To this
purpose, let $W$ be a non trivial weak solution to
\eqref{prob-extension-straith} and $H$ be defined in \eqref{H}. For
any $ \lambda \in (0,r_0]$, we consider the function
\begin{equation}\label{blown-up-solution}
	V^\la(z):=\frac{W(\la z)}{\sqrt{H(\la)}}.
\end{equation}
It is easy to verify that $V^\la$  weakly solves
\begin{equation*}
\begin{cases}
  \dive(t^{1-2s}\widetilde A(\la \cdot)\nabla V^\la)=0,
  &\text{ on } B_{r_0\la^{-1}}^+, \\
  -\lim_{t \to 0^+} t^{1-2s}\widetilde\alpha(\la \cdot) \pd{V^\la}{t}
  = \kappa_{s,N}\la^{2s}\widetilde{h}(\la\cdot) \Tr(V^\la),
  & \text{ on } B_{r_0\la^{-1}}',
\end{cases}
\end{equation*}
where we have defined $\widetilde\alpha$ in \eqref{alpha}. It follows
that, for any $ \la \in (0,r_0]$,
\begin{equation}\label{eq-blow-up-solution}
  \int_{B^+_1}t^{1-2s}\widetilde A(\la \cdot)\nabla V^\la \cdot \nabla \phi \, dz
  - \kappa_{s,N} \la^{2s} \int_{B'_1} \widetilde{h}(\la \cdot)
  \Tr(V^\la)\Tr(\phi) \, dy=0
\end{equation}
for every $\phi \in H^1_{0,S^+_1}(B_1^+,t^{1-2s})$.
Furthermore by \eqref{H} and \eqref{blown-up-solution}
\begin{equation}\label{eq-V-1-boundary}
  \int_{\mathbb{S}^+} \theta_{N+1}^{1-2s}\mu(\la
  \theta)|V^\la(\theta)|^2 \, dS
  =1 \quad \text{for any  } \la \in (0,r_0].
\end{equation}
\begin{proposition}\label{prop-V-H1-bounded}
For every $R\geq1$,  the family of functions $\{V^\la:\la \in
(0,\frac{\bar r}R]\}$ is bounded in
  $H^1(B_{R}^+,t^{1-2s})$.
\end{proposition}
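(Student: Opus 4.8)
The plan is to reduce the statement to a scaling computation using the facts about $H$, $D$, and the frequency function $\mathcal N$ proved in Section \ref{sec-the-monotonicity-formula}. Fix $R\ge1$. By Remark \ref{remarknormaequiv}, it suffices to bound
\[
\int_{S_R^+}t^{1-2s}|V^\la|^2\,dS
\qquad\text{and}\qquad
\int_{B_R^+}t^{1-2s}|\nabla V^\la|^2\,dz
\]
uniformly for $\la\in(0,\bar r/R]$ (recall that $V^\la$ is well defined since $H(\la)>0$ by Proposition \ref{H>0}). The key observation is that the dilation $w=\la z$ rescales the $N$-dimensional surface measure by $\la^N$, the $(N+1)$-dimensional Lebesgue measure by $\la^{N+1}$, and the weight $t^{1-2s}$ by $\la^{2s-1}$.

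For the boundary term, this change of variables and the definition \eqref{blown-up-solution} of $V^\la$ give
\[
\int_{S_R^+}t^{1-2s}|V^\la(z)|^2\,dS(z)
=\frac{\la^{2s-1-N}}{H(\la)}\int_{S_{\la R}^+}t^{1-2s}|W|^2\,dS .
\]
Since $\mu\le2$ by \eqref{mu-estimates}, the definition \eqref{H} of $H$ yields $\int_{S_{\la R}^+}t^{1-2s}|W|^2\,dS\le 2(\la R)^{N+1-2s}H(\la R)$, so the left-hand side is at most $2R^{N+1-2s}\,H(\la R)/H(\la)$; since $\la\le\bar r/R$, the doubling estimate \eqref{eq:doubling} bounds this by $2R^{N+1-2s+\bar c}$, independently of $\la$.

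For the Dirichlet term, the same rescaling gives $\int_{B_R^+}t^{1-2s}|\nabla V^\la|^2\,dz=\la^{2s-N}H(\la)^{-1}\int_{B_{\la R}^+}t^{1-2s}|\nabla W|^2\,dz$. I would then apply \eqref{ineq-nabla-E} with $v=W$ at radius $r=\la R$, which is admissible because $\la R\le\bar r<r_0$, so that \eqref{etapiccolo} holds. Recalling that $\int_{S_r^+}t^{1-2s}\mu W^2\,dS=r^{N+1-2s}H(r)$, that the bracket in the first term on the right-hand side of \eqref{ineq-nabla-E} equals $r^{N-2s}D(r)$, that $\eta_{\tilde h}$ is bounded on $(0,r_0]$, and that $D(r)=\mathcal N(r)H(r)\le CH(r)$ by \eqref{ineq-N-bounded} (with $H>0$), one obtains a constant $C''>0$ independent of $r\in(0,r_0]$ with $\int_{B_r^+}t^{1-2s}|\nabla W|^2\,dz\le C''\,r^{N-2s}H(r)$. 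Plugging in $r=\la R$ and using \eqref{eq:doubling} once more gives $\int_{B_R^+}t^{1-2s}|\nabla V^\la|^2\,dz\le C''R^{N-2s}\,H(\la R)/H(\la)\le C''R^{N-2s+\bar c}$. Combining the two bounds proves the proposition.

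I do not expect a serious obstacle here: the whole argument is extracted from estimates already available in Sections \ref{sec-straightening the boundary} and \ref{sec-the-monotonicity-formula}. The only points requiring care are the bookkeeping of the scaling exponents — notably the factor $\la^{2s-1}$ produced by the weight $t^{1-2s}$ — and checking that the rescaled radius $\la R$ stays below $\bar r$, which is exactly what makes the doubling inequality \eqref{eq:doubling}, the coercivity estimate \eqref{ineq-nabla-E}, and the frequency bound \eqref{ineq-N-bounded} applicable; this is precisely why the family must be restricted to $\la\in(0,\bar r/R]$.
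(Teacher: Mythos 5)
Your argument is correct and follows essentially the same route as the paper's proof: the same scaling identities for the weight $t^{1-2s}$, the doubling bound \eqref{eq:doubling} with $\la R\le\bar r$, the coercivity estimate \eqref{ineq-nabla-E} together with \eqref{etapiccolo} and the frequency bound \eqref{ineq-N-bounded} for the gradient term, and the norm equivalence of Remark \ref{remarknormaequiv} (i.e. \eqref{eq-Poincare}) to conclude. The only immaterial slip is in the boundary estimate, where what you actually use is the lower bound $\mu\ge\tfrac12$ from \eqref{mu-estimates} (so that $W^2\le 2\mu W^2$), not the upper bound $\mu\le2$.
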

\begin{proof}
By \eqref{ineq-nabla-E} and \eqref{eq:doubling} we have that,
for all $\lambda\in (0,\frac{\bar r}R]$ with $\bar r$ as in Lemma \ref{l:doub},
\begin{align*}
  \int_{B_R^+} t^{1-2s} |\nabla V^\la|^2 \,
  dz&=\frac{\la^{2s-N}}{H(\la)}\int_{B_{\la R}^+} t^{1-2s}|\nabla W|^2 \,
      dz\leq \frac{\la^{2s-N}R^{\bar c}}{H(\la R)}\int_{B_{\la R}^+}
      t^{1-2s}
      |\nabla W|^2 \,
      dz\\
    &\le \frac{2 R^{\bar c+N-2s}}{1-2 \kappa_{N,s}\eta_{\tilde{h}}(\la R)}
      \mathcal{N}(\la R)
      +\frac{2(N-2s) R^{\bar c+N-2s}\kappa_{N,s}
      \eta_{\tilde{h}}(\la R)}{1-2\kappa_{N,s}
      \eta_{\tilde{h}}(\la R)},
\end{align*}
which, together with \eqref{etapiccolo} and \eqref{ineq-N-bounded},
allows us to deduce that $\{\nabla V^\la:\la \in (0,\frac{\bar r}R]\}$ is
uniformly bounded in $L^2(B_R^+,t^{1-2s})$. On the other hand,
  \eqref{mu-estimates}, a scaling argument, and \eqref{eq:doubling} imply that
  \begin{equation*}
    \int_{S_R^+}t^{1-2s}|V^\lambda|^2dS=\frac{\lambda^{-N-1+2s}}{H(\lambda)}
    \int_{S_{R\lambda}^+}t^{1-2s}W^2dS\leq2 R^{N+1-2s}
    \frac{H(R\lambda)}{H(\lambda)}\leq2 R^{N+1-2s+\bar c},
  \end{equation*}
  so that the claim follows from \eqref{eq-Poincare}.
\end{proof}

\begin{proposition}\label{prop-blow-up-sub}
  Let $W$ be a non trivial weak solution to
  \eqref{prob-extension-straith}. Let $\gamma$ be as in Proposition
  \ref{prop-limit-N}. There exists
  $m_0 \in \mathbb{N} \setminus \{0\}$ (which is odd
  in the case $N=1$) such that
\begin{equation}\label{eq-gamma-integer}
\gamma=m_0.
\end{equation}
Furthermore, for any sequence $\{\lambda_n\}$ such that $\la_n \to
0^+$ as $n\to\infty$, there
exist a subsequence $\{\la_{n_k}\}$ and an eigenfunction $\Psi$  of problem
\eqref{prob-eigenvalues} associated
with the eigenvalue $\mu_{m_0}=m_0^2+m_0 (N-2s)$
such that 
$\norm{\Psi}_{L^2(\mathbb{S}^+,\theta_{N+1}^{1-2s})}=1$ and
\begin{equation}\label{limit-blow-up-H}
\frac{W(\la_{n_k} z)}{\sqrt{H(\la_{n_k})}} \to |z|^{\gamma}\Psi\left(\frac{z}{|z|}\right)  \text{ as } k \to +\infty \quad \text{ strongly in } H^1(B_1^+,t^{1-2s}).
\end{equation}
\end{proposition}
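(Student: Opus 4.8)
The plan is a blow-up argument in the spirit of \cite{Fall-Felli-2014,de2021strong}, built around the rescaled functions $V^\la$ of \eqref{blown-up-solution}. First I would establish a compactness property: fixing $R\geq1$, by Proposition \ref{prop-V-H1-bounded} the family $\{V^\la:\la\in(0,\bar r/R]\}$ is bounded in $H^1(B_R^+,t^{1-2s})$, so from any sequence $\la_n\to0^+$ a diagonal procedure yields a subsequence $\{\la_{n_k}\}$ along which $V^{\la_{n_k}}\rightharpoonup V$ weakly in $H^1(B_R^+,t^{1-2s})$ for every $R\geq1$. To upgrade this to strong convergence — the technical core — I would invoke the Sobolev-type regularity of Remark \ref{rem:regularity}: each $V^\la$ solves \eqref{eq-blow-up-solution}, an equation of the form \eqref{prob-extension-straith} with coefficients $\widetilde A(\la\cdot)$, $\widetilde\alpha(\la\cdot)$, $\la^{2s}\widetilde h(\la\cdot)$ satisfying \eqref{ellipticity}--\eqref{mu-estimates} uniformly in $\la$ and with $\|\la^{2s}\widetilde h(\la\cdot)\|_{W^{1,\frac N{2s}+\e}(B_r')}\to0$ by scaling (since $2s>N/(\tfrac N{2s}+\e)$); hence, for $r<R$ and $\la$ small, $\nabla_x V^\la$ is bounded in $H^1(B_r^+,t^{1-2s})$ and $t^{1-2s}\partial_tV^\la$ in $H^1(B_r^+,t^{2s-1})$, and weighted Rellich embeddings give $\nabla V^{\la_{n_k}}\to\nabla V$ in $L^2(B_r^+,t^{1-2s})$, so $V^{\la_{n_k}}\to V$ strongly in $H^1(B_r^+,t^{1-2s})$ for each $r>0$.

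Next I would identify $V$. Passing to the limit in \eqref{eq-blow-up-solution}, using $\widetilde A(\la\cdot)\to\mathrm{Id}_{N+1}$ uniformly by \eqref{A-O}, $\la^{2s}\widetilde h(\la\cdot)\to0$, and weak--strong convergence, one sees that $V$ weakly solves $\dive(t^{1-2s}\nabla V)=0$ in $\R^{N+1}_+$ with $-\lim_{t\to0^+}t^{1-2s}\partial_tV=0$ on $\R^N\times\{0\}$; moreover $V$ is odd in $y_N$ (a symmetry preserved by \eqref{blown-up-solution} and by the limit), and $V\not\equiv0$, since \eqref{eq-V-1-boundary}, the compactness of the trace \eqref{trace-Sr+}, and \eqref{mu-estimates}--\eqref{mu-O} give $\int_{\mathbb{S}^+}\theta_{N+1}^{1-2s}|V|^2\,dS=1$.

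The heart of the matter is then to show that $V$ is homogeneous of degree $\gamma$. A change of variables in \eqref{H}--\eqref{D} shows that the height, energy and frequency functions of $V^\la$ formed with the rescaled coefficients satisfy $H^\la(r)=H(\la r)/H(\la)$, $D^\la(r)=D(\la r)/H(\la)$, hence $\mathcal N^\la(r)=\mathcal N(\la r)$. Letting $k\to\infty$ along $\{\la_{n_k}\}$ and using the strong convergence above (together with $\int_{S_r^+}t^{1-2s}V^2\,dS>0$ for all $r>0$, by unique continuation as in Proposition \ref{H>0}), the Almgren quotient $\mathcal N_V$ of $V$ — defined as in \eqref{N} with $\widetilde A=\mathrm{Id}_{N+1}$, $\widetilde h=0$ — equals $\lim_k\mathcal N(\la_{n_k}r)=\gamma$ for every $r>0$. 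Since $V$ solves the limiting equation, the Pohozaev identity and the computations of Section \ref{sec-the-monotonicity-formula} apply to $V$ with \emph{no} error terms, so $\mathcal N_V'=\mathcal V_V\geq0$ a.e., where $\mathcal V_V$ is the Cauchy--Schwarz defect of Proposition \ref{prop-N'}; as $\mathcal N_V$ is constant, $\mathcal V_V\equiv0$, which means that $\nabla V\cdot\nu$ is proportional to $V$ on a.e.\ sphere $S_r^+$, and combined with $\mathcal N_V\equiv\gamma$ this gives $\partial_rV=\tfrac\gamma rV$, i.e.\ $V(z)=|z|^\gamma\Psi(z/|z|)$ with $\Psi:=V|_{\mathbb{S}^+}$.

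Finally, inserting $V=|z|^\gamma\Psi$ into the limiting problem and separating variables shows that $\Psi\in H^1_{\rm odd}(\mathbb{S}^+,\theta_{N+1}^{1-2s})$ is an eigenfunction of \eqref{prob-eigenvalues} with eigenvalue $\gamma(\gamma+N-2s)$, while $\|\Psi\|_{L^2(\mathbb{S}^+,\theta_{N+1}^{1-2s})}=1$ by the previous paragraph. If $\gamma=0$ then $\Psi$ would be constant, hence $\equiv0$ by oddness in $\theta_N$, contradicting its normalization; so $\gamma>0$, and by \eqref{eigenvalues} one has $\gamma(\gamma+N-2s)=\mu_{m_0}$ for some $m_0\in\mathbb N\setminus\{0\}$. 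Since $t\mapsto t(t+N-2s)$ is strictly increasing on $[0,+\infty)$ (because $N>2s$), this forces $\gamma=m_0$, which is odd when $N=1$ by the second line of \eqref{eigenvalues}; this proves \eqref{eq-gamma-integer}, and the strong convergence established above is precisely \eqref{limit-blow-up-H}. I expect the main obstacle to be the weak-to-strong upgrade in the first paragraph, which relies crucially on the scale invariance of the regularity estimates \eqref{eq:regularity}; granted that, identifying the limiting equation, transferring the frequency, and exploiting the rigidity in the Cauchy--Schwarz inequality are routine.
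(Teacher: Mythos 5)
Your argument is correct and follows the same overall blueprint as the paper (boundedness of $V^\la$, passage to the limit in \eqref{eq-blow-up-solution}, nondegeneracy of $V$ via \eqref{eq-V-1-boundary} and trace compactness, transfer of the frequency to get $\mathcal N_V\equiv\gamma$, rigidity in the Cauchy--Schwarz inequality giving $V=|z|^\gamma\Psi(z/|z|)$, and identification of $\gamma$ through the spectrum \eqref{eigenvalues} together with $\gamma\ge 0$). The one place where you genuinely diverge is the weak-to-strong upgrade. The paper does not use an interior compact embedding: it rescales the divergence identity of Proposition \ref{prop-forula-div} (formula \eqref{eq-blow-up-main:8}), uses the uniform estimates of Remark \ref{rem:regularity} only through the continuity of the trace operators to get weak $L^2(\mathbb S^+,\theta_{N+1}^{1-2s})$ convergence of $\widetilde A(\la_{n_k}\cdot)\nabla V^{\la_{n_k}}\cdot z$, and then tests the identity with $V^{\la_{n_k}}$ itself to obtain convergence of the energies $\int t^{1-2s}|\nabla V^{\la_{n_k}}|^2$, whence strong convergence via Remark \ref{remarknormaequiv}. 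You instead deduce strong $L^2(t^{1-2s})$ convergence of the full gradient from the uniform bounds on $\nabla_x V^\la$ in $H^1(B_r^+,t^{1-2s})$ and on $t^{1-2s}\partial_t V^\la$ in $H^1(B_r^+,t^{2s-1})$ via a Rellich-type compact embedding for $A_2$-weighted Sobolev spaces. This works (both $t^{1-2s}$ and $t^{2s-1}$ are $A_2$, and $\|t^{1-2s}\partial_t V^\la-t^{1-2s}\partial_t V\|_{L^2(t^{2s-1})}=\|\partial_t V^\la-\partial_t V\|_{L^2(t^{1-2s})}$), and it is arguably more direct, but it rests on two ingredients you should make explicit: a reference for the weighted compact embedding, which the paper never states (it only recalls compactness of the trace \eqref{trace-Sr+}), and the verification that the constant in Remark \ref{rem:regularity} is uniform along the rescaled problems because $\|\widetilde A(\la\cdot)\|_{W^{1,\infty}}$ and $\la^{2s}\|\widetilde h(\la\cdot)\|_{W^{1,\frac N{2s}+\e}(B_1')}$ stay bounded (indeed the latter tends to $0$, exactly as in \eqref{eq-blow-up-main:5}); the paper relies on the same uniformity, so this is a check, not an obstruction. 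The paper's route buys economy of tools (only trace compactness plus the already-proved identity \eqref{formula-div}); yours buys a shorter, more conceptual compactness step at the price of importing an extra embedding theorem. The remaining steps (frequency transfer, $\mathcal V_V\equiv 0$, separation of variables, oddness of $\Psi$, and $\gamma=m_0$ by monotonicity of $t\mapsto t(t+N-2s)$ with $m_0$ odd for $N=1$) coincide with the paper's proof; your extra observation ruling out $\gamma=0$ via oddness is harmless, though the paper only needs $\gamma\ge 0$ from Proposition \ref{prop-limit-N}.
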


\begin{proof}
  Let $W$ be a non trivial weak solution to
  \eqref{prob-extension-straith} and $\{\lambda_n\}$ be a sequence
  such that $\la_n \to
0^+$ as $n\to+\infty$.
Thanks to Proposition \ref{prop-V-H1-bounded}, there exist a
subsequence $\{\la_{n_k}\}$ and
$V \in H^1(B_1^+,t^{1-2s})$ such that
\begin{equation}\label{eq-blow-up-main:1}
V^{\la_{n_k}} \rightharpoonup V \quad \text{ weakly in } H^1(B_1^+,t^{1-2s})\text{ as } k \to +\infty.	
\end{equation}
Observing that $\la_{n_k} \in (0,r_0)$ and
thus $B^+_1\subset B^+_{r_0/\lambda_{n_k}}$ for sufficiently large
$k$, from \eqref{eq-blow-up-solution} we deduce that, for sufficiently
large $k$,
\begin{equation}\label{eq-blow-up-main:1.1}
  \int_{B_1^+}t^{1-2s}\widetilde A(\la_{n_k}\cdot) \nabla V^{\la_{n_k}}
  \cdot \nabla \phi \, dz =\kappa_{s,N}\la_{n_k}^{2s}
  \int_{B_1'}\widetilde{h}(\la_{n_k}\cdot)\Tr( V^{\la_{n_k}}) \Tr(\phi)\, dy
\end{equation}
for every $\phi \in H^1_{0,S_1^+}(B_1^+,t^{1-2s})$. In order to study
what happens as $k\to +\infty$, we notice that the term on the left
hand side of \eqref{eq-blow-up-main:1.1} can be rewritten as follows
\begin{align}\label{eq-blow-up-main:2}
  &\int_{B_1^+}t^{1-2s}\widetilde A(\la_{n_k}\cdot) \nabla
    V^{\la_{n_k}}
    \cdot \nabla \phi \, dz \\
  &=\int_{B_1^+}t^{1-2s}(\widetilde A(\la_{n_k}\cdot)
    -\mathrm{Id} _{N+1} )\nabla V^{\la_{n_k}}\cdot \nabla \phi \, dz+
    \int_{B_1^+}t^{1-2s}\nabla V^{\la_{n_k}}\cdot \nabla \phi \, dz. \notag
\end{align}
Therefore, in view of \eqref{A-O}, Proposition \ref{prop-V-H1-bounded}
and \eqref{eq-blow-up-main:1}, we conclude that
\begin{equation}\label{eq-blow-up-main:3}
  \lim_{k \to +\infty}\int_{B_1^+}t^{1-2s}\widetilde A(\la_{n_k}\cdot)
  \nabla V^{\la_{n_k}}\cdot \nabla \phi \, dz=\int_{B_1^+} t^{1-2s}
  \nabla V\cdot \nabla \phi \, dz.
\end{equation}
As for the right hand side in \eqref{eq-blow-up-main:1.1}, we have that
\begin{align}\label{eq-blow-up-main:4}
  &\bigg|\la_{n_k}^{2s}\int_{B_1'}\widetilde{h}(\la_{n_k}\cdot)
    \Tr( V^{\la_{n_k}}) \Tr(\phi)\, dy\bigg| \\
  \notag\le \la_{n_k}^{2s}&\eta_{\tilde h(\la_{n_k}\cdot)}(1)
    \left(\int_{B_1^+}t^{1-2s} |\nabla \phi|^2\, dy\right)^{\!\!\frac{1}{2}}
    \!\!\left(\int_{B_1^+}t^{1-2s} |\nabla V^{\la_{n_k}} |^2\, dz
    +\frac{N-2s}{2}\int_{\mathbb{S}^+}
    \theta_{N+1}^{1-2s}|V^{\la_{n_k}}|^2
    \, dS\right)^{\!\!\frac{1}{2}}
\end{align}
thanks to H\"older's inequality and \eqref{ineq-found}. By
\eqref{eta-f} and the change of variable
$x\mapsto \lambda_{n_k}x$, we obtain that
\begin{align}\label{eq-blow-up-main:5}
  \la_{n_k}^{2s}\eta_{\tilde h(\la_{n_k}\cdot)}(1) &=
    \mathcal S_{N,s}\omega_N^{\frac{4
    s^2\e}{N(N+2s\e)}}\lambda_{n_k}^{2s}
    \Vert\widetilde h(\la_{n_k}\cdot)\Vert_{L^{\frac{N}{2s}+\e}(B_1')}\\
&= \mathcal S_{N,s}\omega_N^{\frac{4 s^2\e}{N(N+2s\e)}}\Vert\widetilde h\Vert_{L^{\frac{N}{2s}+\e}(B_{\la_{n_k}}')} \la_{n_k}^{\frac{4s^2 \e}{N+2s\e}}.\notag
\end{align}
Putting together \eqref{eq-blow-up-main:4} and
\eqref{eq-blow-up-main:5}, thanks to Proposition
\ref{prop-V-H1-bounded}, \eqref{eq-V-1-boundary}, and
\eqref{mu-estimates} we infer that
\begin{equation}\label{eq-blow-up-main-5.1}
  \lim_{k \to +\infty}	\la_{n_k}^{2s}\int_{B_1'}
  \widetilde{h}(\la_{n_k}\cdot)\Tr( V^{\la_{n_k}}) \Tr(\phi)\, dy=0.
\end{equation}
Passing to the limit as $k \to +\infty$ in \eqref{eq-blow-up-main:1.1}
we conclude that $V$ weakly solves the following problem:
\begin{equation}\label{eq-blow-up-main:6}
\begin{cases}
\dive(t^{1-2s}\nabla V)=0, &\text{in } B_1^+,\\
\lim_{t \to 0^+} t^{1-2s}\pd{V}{t}=0, &\text{on } B_1'.
\end{cases}
\end{equation}
In particular $V$ is smooth on $B_1^+$ and $V\not \equiv 0$ since, by
\eqref{mu-O}, \eqref{eq-blow-up-main:1} and the compactness of the
trace operator  in \eqref{trace-Sr+}, \eqref{eq-V-1-boundary}
leads to
\begin{equation}\label{eq-V-1-boundary-limit-profile}
	\int_{\mathbb{S}^+} \theta_{N+1}^{1-2s}V^2 \, dS =1.
\end{equation}
Now we aim to show that, along a further subsequence,
\begin{equation}\label{eq-blow-up-main:7}
	V^{\la_{n_k}} \to V \quad \text{strongly in } H^1(B_1^+,t^{1-2s})\text{ as } k \to +\infty.	
\end{equation}
To this purpose, we first notice that a change of variables in
\eqref{formula-div} yields
\begin{multline}\label{eq-blow-up-main:8}
  \int_{B_1^+ }t^{1-2s} \widetilde A(\la_{n_k}\cdot) \nabla
    V^{\la_{n_k}} \cdot \nabla \phi \, dz-\int_{\mathbb{S}^+} \theta_{N+1}^{1-2s}
    \widetilde A(\la_{n_k }\cdot) \nabla V^{\la_{n_k}} \cdot z \, \phi \, dS\\
    =\kappa_{s,N}\la_{n_k}^{2s}\int_{B_1'} \widetilde h(\la_{n_k}\cdot)
    \Tr(V^{\la_{n_k}}) \Tr(\phi) \, dy 
\end{multline}
for any $\phi \in H^1(B_1^+,t^{1-2s})$ and $k$ sufficiently
  large.

From Proposition \ref{prop-V-H1-bounded} and the regularity result
contained in \cite[Theorem 2.1]{fellisiclari} and recalled in Remark
\ref{rem:regularity}, it follows that, for $k$ sufficiently large,
$\{\nabla _x V^{\lambda_{n_k}}\}$ and
$\big\{t^{1-2s}\frac{\partial V^{\lambda_{n_k}}}{\partial t}\big\}$
are bounded uniformly with respect to $k$ in the spaces
$H^1(B^+_1, t^{1-2s})$ and $H^1(B^+_1,t^{2s-1})$ respectively.  Then,
by the continuity of the trace operator $\mathop{\rm Tr}_1$ from
$H^1(B^+_1, t^{1-2s})$ to $L^2(\mathbb{S}^+,\theta_{N+1}^{1-2s})$
and from $H^1(B^+_1, t^{2s-1})$ to
  $L^2(\mathbb{S}^+,\theta_{N+1}^{2s-1})$, we have that
  $\{\mathop{\rm Tr}_1(\nabla _x V^{\lambda_{n_k}})\}$ is bounded in
  $\big(L^2(\mathbb{S}^+, \theta_{N+1}^{1-2s})\big)^N$ and
  $\big\{t^{1-2s}\frac{\partial V^{\lambda_{n_k}}}{\partial t}\big\}$
  is bounded in $L^2(\mathbb{S}^+, \theta_{N+1}^{2s-1})$. Therefore
\begin{equation*}
  \int_{\mathbb{S}^+}\theta_{N+1}^{1-2s}|\nabla V^{\lambda_{n_k}}|^2\,
  dS=\int_{\mathbb{S}^+}\theta_{N+1}^{1-2s}|\nabla_x
  V^{\lambda_{n_k}}|^2\,dS+\int_{\mathbb{S}^+}\theta_{N+1}^{2s-1}\left|
    \theta_{N+1}^{1-2s}\frac{\partial V^{\lambda_{n_k}} }{\partial
        t}\right|^2\, dS
\end{equation*}
is bounded uniformly with respect to $k$.  Taking into account
\eqref{A-O}, it follows that there exists
$f \in L^2(\mathbb{S}^+,\theta_{N+1}^{1-2s})$ such that, up to a
further subsequence,
\begin{equation}\label{eq-blow-up-main:9}
  \widetilde A(\la_{n_k }\cdot) \nabla V^{\la_{n_k}} \cdot z \rightharpoonup
  f \quad \text{ weakly in } L^2(\mathbb{S}^+,\theta_{N+1}^{1-2s})
  \text{ as } k \to +\infty.
\end{equation}
Thus by \eqref{eq-blow-up-main:3} and after proving
\eqref{eq-blow-up-main-5.1} when $\phi\in H^1(B^+_1, t^{1-2s})$ with
the same argument (i.e. combining \eqref{ineq-found} with
    \eqref{eq-blow-up-main:5}), passing to the limit as
$ k \to +\infty$ in \eqref{eq-blow-up-main:8} we obtain that
\begin{equation}\label{eq-blow-up-main:10}
  \int_{B_1^+ }t^{1-2s}  \nabla V\cdot \nabla \phi \, dz
  =\int_{\mathbb{S}^+} \theta_{N+1}^{1-2s} f \phi \, dS
\end{equation}
for any $\phi \in H^1(B_1^+,t^{1-2s})$.  Furthermore, by
\eqref{eq-blow-up-main:9}, combined with \eqref{eq-blow-up-main:1} and
compactness of the trace operator in \eqref{trace-Sr+}, we have that
\begin{equation}\label{eq-blow-up-main:11}
\lim_{k \to +\infty}\int_{\mathbb{S}^+} t^{1-2s} \widetilde A(\la_{n_k
}\cdot)
\nabla V^{\la_{n_k}} \cdot z \  V^{\la_{n_k}}\, dS=\int_{\mathbb{S}^+}
t^{1-2s} f V \, dS.
\end{equation}
Hence, testing \eqref{eq-blow-up-main:8} with $V^{\la_{n_k}}$ itself,
taking into account \eqref{eq-blow-up-main:11}, using
\eqref{eq-blow-up-main-5.1} with $\phi=V^{\lambda_{n_k}}$, and passing
to the limit as $ k \to +\infty$, we deduce that
\begin{equation*}
  \lim_{k \to +\infty}\int_{B_1^+ }t^{1-2s} \widetilde A(\la_{n_k}\cdot)
  \nabla V^{\la_{n_k}} \cdot \nabla V^{\la_{n_k}}\,
  dz=\int_{\mathbb{S}^+}
  t^{1-2s} f V \, dS,
\end{equation*}
which, by \eqref{eq-blow-up-main:10} tested with $V$, implies  that 
\begin{equation}\label{eq-blow-up-main:13}
\lim_{k \to +\infty}\int_{B_1^+ }t^{1-2s} A(\la_{n_k}\cdot) \nabla
V^{\la_{n_k}}
\cdot \nabla V^{\la_{n_k}}\, dz=\int_{B_1^+ }t^{1-2s} |\nabla V|^2 dz.
\end{equation}
Writing the left hand side in \eqref{eq-blow-up-main:13} as in
\eqref{eq-blow-up-main:2}, by \eqref{A-O} and
Proposition \ref{prop-V-H1-bounded} we infer that
\begin{equation*}
  \lim_{k \to +\infty}\int_{B_1^+ }t^{1-2s} | \nabla V^{\la_{n_k}} |^2\,
  dz
  =\int_{B_1^+ }t^{1-2s} |\nabla V|^2 dz.
\end{equation*}
This convergence, together with \eqref{eq-blow-up-main:1}, allows us
to conclude that $\nabla V^{\lambda_{n_k}}\to \nabla V $ in
$L^2(B^+_1,t^{1-2s})$.  In conclusion, combining this with the
compactness of the trace operator given in \eqref{trace-Sr+},
\eqref{eq-blow-up-main:7} easily follows from Remark
\ref{remarknormaequiv}.

For any $ r \in (0,1]$ and $k\in\mathbb{N}$ we define
\begin{align*}
  &H_k(r):=\frac1{r^{N+1-2s}}\int_{S_r^+}t^{1-2s}
    \mu(\la_{n_k}\cdot) |V^{\la_{n_k}}|^2 \, dS, \\
  &D_k(r):=\frac1{r^{N-2s}}\!\left(\!\int_{B_r^+}\!t^{1-2s}\widetilde
    A(\la_{n_k}
    \cdot)
    \nabla V^{\la_{n_k}}\!\cdot\! \nabla V^{\la_{n_k}} dz
    -k_{s,N}\la_{n_k}^{2s}
    \int_{B_r'}\!\widetilde{h}(\la_{n_k}\cdot)
    |\Tr(V^{\la_{n_k}})|^2 \, dy\right),
\end{align*}
and
\begin{equation*}
  H_V(r):=\frac1{r^{N+1-2s}}\int_{S_r^+}t^{1-2s}V^2 \, dS, \quad
  D_V(r):=\frac1{r^{N-2s}}\int_{B_r^+}t^{1-2s}|\nabla V|^2\, dz.
\end{equation*}
By Proposition \ref{H>0} in the case $\widetilde{h}=0$,
$\widetilde A=\mathrm{Id}_{N+1}$ and $\mu=1$, it is clear that
$H_V(r)>0$ for any $r \in (0,1]$. Thus the frequency function
\begin{equation*}
  \mathcal{N}_V(r):=\frac{D_V(r)}{H_V(r)}\quad
  r \in (0,1]
\end{equation*}
is well defined.  Furthermore by \eqref{limit-N},
\eqref{eq-blow-up-main:7}, a change of variables, and a
  combination of \eqref{ineq-found} and
    \eqref{eq-blow-up-main:5}, we have that
\begin{equation}\label{eq-blow-up-main:14}
\gamma=\lim_{k \to +\infty} \mathcal{N}(\la_{n_k}r)=\lim_{k \to
  +\infty}
\frac{D_k(r)}{H_k(r)}=\mathcal{N}_V(r) \quad \text{ for any } r \in (0,1]
\end{equation}
and hence $\mathcal{N}_V'(r)=0$ for a.e. $ r \in (0,1]$. 
Arguing as in  Proposition \ref{prop-N'} in the case $\widetilde{h}=0$, 
$\widetilde A=\mathrm{Id}_{N+1}$ and $\mu=1$, we
can prove that 
\begin{equation*}
  \mathcal{N}_V'(r)=2r\frac{ \left(\int_{S_r^+}t^{1-2s}V^2\,
      dS\right)
    \left(\int_{S_r^+}t^{1-2s}|\nabla V \cdot \nu|^2\, dS
    \right)-\left(\int_{S_r^+}t^{1-2s}V (\nabla V \cdot \nu)
      \, dS\right)^2}{\left(\int_{S_r^+}t^{1-2s}V^2\, dS\right)^2}.
\end{equation*}
Therefore we conclude that
\begin{equation*}
  \left(\int_{S_r^+}t^{1-2s}V^2\, dS\right)\left(\int_{S_r^+}t^{1-2s}
    |\nabla V\cdot \nu|^2 \, dS \right)=\left(\int_{S_r^+}t^{1-2s}V\,
    (\nabla V\cdot \nu)  \, dS\right)^2 \quad \text{ a.e. } r \in (0,1)
\end{equation*}
where $\nu=\frac{z}{|z|}$, i.e. equality holds in the Cauchy-Schwartz
inequality for the vectors $V$ and $\nabla V\cdot \nu$ in
$L^2(S_r^+,t^{1-2s})$ for a.e. $r \in (0,1)$.  It follows that there
exists a function $\rho(r)$ defined a.e. such that, writing $V$ in
polar coordinates,
\begin{equation}\label{eq-blow-up-main:16}
  \pd{V}{r}(r\theta) =\rho(r)V(r\theta)\quad \text{for a.e. } r\in(0,1)
  \text{ and for any }  \theta\in\mathbb{S}^+.
\end{equation} 
By \eqref{eq-blow-up-main:16} we have that
\begin{equation}\label{eq-blow-up-main:17}
  \int_{S^+_r}t^{1-2s}V (\nabla V \cdot \nu )\, dS =
  \rho(r) \int_{S^+_r}t^{1-2s} V^2\, dS.
\end{equation}
In the case $\widetilde{h}=0$, $A=\mathrm{Id}_{N+1}$ and
$\mu=1$, \eqref{H'1} boils down to
$H'_V = \frac{2}{r^{N+1-2s}}\int_{S^+_r} t^{1-2s} V\frac{\partial
  V}{\partial\nu}\,dS $, since the perturbative term involves
$\nabla \mu$, which now trivially equals 0. From this and
\eqref{eq-blow-up-main:17} we deduce that
$\rho(r)= \frac{H'_V(r)}{2H_V(r)}$.  At this point, we exploit
\eqref{D-as-H'} which, in the case 
$\widetilde{h}=0$, $A=\mathrm{Id}_{N+1}$ and $\mu=1$, becomes 
$H'_V(r)=\frac{2}{r}D_V(r)$ and thus implies 
\begin{equation*}
\rho(r)=\frac{1}{r}\mathcal{N}_V(r)= \frac{\gamma}{r},
\end{equation*}
where we used also \eqref{eq-blow-up-main:14}.  Then an integration
over $(r,1)$ of \eqref{eq-blow-up-main:16} for any fixed
$\theta \in \mathbb{S}^+$ yields
\begin{equation}\label{eq-blow-up-main:19}
  V(r\theta)=r^{\gamma}V(\theta)=
  r^{\gamma}\Psi(\theta) \quad
  \text{ for any } (r,\theta)\in(0,1] \times \mathbb{S}^+,
\end{equation}
where $\Psi:=\restr{V}{\mathbb{S}^+}$.  In view of \cite[Lemma
2.1]{Fall-Felli-2014}, 
\eqref{eq-blow-up-main:6} becomes
\begin{equation*}
  \gamma(N-2s+\gamma)r^{-1-2s+\gamma}\theta_{N+1}^{1-2s}
  \Psi(\theta)+r^{-1-2s+\gamma}\mathop{\rm{div}_{\mathbb{S}^+}}
  (\theta_{N+1}^{1-2s}\nabla_{\mathbb{S^+}}\Psi(\theta))=0 
\end{equation*}
for any $(r,\theta)\in(0,1] \times \mathbb{S}^+$, together with
  the boundary condition
  $\lim_{\theta_{N+1} \to 0^+}\theta_{N+1}^{1-2s}\,\nabla_{\mathbb{S}}
  \Psi\cdot\nu=0$ on $\mathbb{S}'$.  Since $V^\la$ is odd with
respect to $y_N$ for any $\la \in(0,r_0]$ by \eqref{blown-up-solution}
and \eqref{W}, then also $V$ is odd with respect to $y_N$, so
  that $\Psi \in H_{\rm odd}^1(\mathbb{S}^+,\theta_{N+1}^{1-2s})$.
By \eqref{eq-blow-up-main:19} and
\eqref{eq-V-1-boundary-limit-profile} we have that
$\norm{\Psi}_{L^2(\mathbb{S}^+,\theta_{N+1}^{1-2s})}= 1$, so that
$\Psi\not\equiv 0$ is an eigenfunction of problem
\eqref{prob-eigenvalues} associated to the eigenvalue
$\gamma(\gamma+N-2s)$.  From \eqref{eigenvalues} it follows that there
exists $m_0 \in \mathbb{N}\setminus \{0\}$ (which is odd
  in the case $N=1$) such that
$\gamma(\gamma+N-2s)=m_0(m_0+N-2s)$.  Therefore, since
  $\gamma\geq0$ by Proposition \ref{prop-limit-N}, we conclude that
  $\gamma=m_0$ thus proving \eqref{eq-gamma-integer}. Moreover
\eqref{limit-blow-up-H} follows from \eqref{eq-blow-up-main:7}
  and \eqref{eq-blow-up-main:19}.
\end{proof}

In Proposition \ref{prop-limit-H} we have shown that there exists the
limit $\lim_{\la \to 0^+}\la^{-2\gamma}H(\la)$ and it is non-negative.
Now we prove that $\lim_{\la \to 0^+}\la^{-2\gamma}H(\la)>0$.

To this end we define, for every $ \la \in(0,r_0]$,
$m\in\mathbb{N}\setminus \{0\}$, $k\in \{1,\dots, M_m\}$,
\begin{equation}\label{Fourier-coefficents}
  \varphi_{m,k}(\la):=\int_{\mathbb{S}^+}\theta_{N+1}^{1-2s}
  W(\la \theta) Y_{m,k}(\theta) \, dS,
\end{equation}
i.e. $\{\varphi_{m,k}(\la)\}_{m,k }$ are the Fourier coefficients of
$W(\la \cdot)$ with respect to the orthonormal basis
$\{Y_{m,k}\}_{m,k }$ introduced in \eqref{orthonormal-base}.
For every $\la \in (0,r_0]$, $m\in\mathbb{N}\setminus \{0\}$,
$k\in \{1,\dots, M_m\}$, we also define 
\begin{align}\label{Upsilon}
  \Upsilon_{m,k}(\la):=&-\int_{B_{\la}^+} t^{1-2s} (\widetilde A-\mathop{\rm Id}
    \nolimits_{N+1})\nabla W \cdot \frac{1}{|z|}\nabla_{\mathbb{S}}
    Y_{m,k}\big(\tfrac{z}{|z|}\big)\, dz  \\
   &+\int_{S_{\la}^+} t^{1-2s} (\widetilde A-\mathop{\rm Id}
                         \nolimits_{N+1})\nabla W
    \cdot \frac{z}{|z|}Y_{m,k}\big(\tfrac{z}{|z|}\big)\, dS\notag\\
  &+\kappa_{N,s} \int_{B_{\la}'}\widetilde{h}(y) \Tr(W)
    \Tr\left( Y_{m,k}\big(\tfrac{y}{|y|}\big)\right)\, dy\notag, 
\end{align}
where $\mathop{\rm Id}\nolimits_{N+1}$ is the identity
$(N+1)\times (N+1)$ matrix.
\begin{proposition}
 Let $\gamma$ be as in \eqref{limit-N} and let
  $m_0\in \mathbb{N} \setminus \{0\}$ be such that $\gamma=m_0$
  according to Proposition \ref{prop-blow-up-sub}. For every
  $k \in \{1, \dots, M_{m_0}\}$ and $r \in (0,r_0]$
\begin{multline}\label{eq-Fourier-coefficents}
  \varphi_{m_0,k}(\la)=\la^{m_0}\left(\frac{\varphi_{m_0,k}(r)}{r^{m_0}}
    +\frac{m_0r^{-2m_0-N+2s}}{2m_0+N-2s}\int_0^r\rho^{m_0-1}
    \Upsilon_{m_0,k}(\rho) \, d\rho\right)\\
  +\la^{m_0}\frac{m_0+N-2s}{2m_0+N-2s}\int_{\la}^r\rho^{-m_0-N-1+2s}
  \Upsilon_{m_0,k}(\rho)
  \, d\rho + O\left(\la^{m_0+\frac{4s^2\e}{N+2s\e}}\right)
\end{multline}
as $\la \to 0^+$.
\end{proposition}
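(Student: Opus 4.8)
The plan is to derive and solve an ordinary differential equation satisfied by each Fourier coefficient $\varphi_{m_0,k}$. The structural point that makes this work is that, since $\mu_{m_0}=m_0(m_0+N-2s)$ by \eqref{eigenvalues}, the homogeneous version of this equation is an Euler equation whose two independent solutions are $\lambda^{m_0}$ and $\lambda^{-(m_0+N-2s)}$; this is exactly what produces the powers of $\lambda$ and $r$ appearing in \eqref{eq-Fourier-coefficents}.

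First I would derive the equation. Since $N>2s$, the homogeneous-degree-zero function $z\mapsto Y_{m_0,k}(z/|z|)$ lies in $H^1(B^+_{r_0},t^{1-2s})$, so it is admissible in \eqref{formula-div}. Testing \eqref{formula-div} at radius $\lambda\in(0,r_0)$ with this function and splitting $\widetilde A=\mathrm{Id}_{N+1}+(\widetilde A-\mathrm{Id}_{N+1})$ on both sides, the definition \eqref{Upsilon} of $\Upsilon_{m_0,k}$ is precisely what isolates the perturbative contributions, so that $\int_{B^+_\lambda}t^{1-2s}\nabla W\cdot\frac1{|z|}\nabla_{\mathbb S}Y_{m_0,k}(z/|z|)\,dz$ equals $\int_{S^+_\lambda}t^{1-2s}(\nabla W\cdot\frac z{|z|})Y_{m_0,k}(z/|z|)\,dS+\Upsilon_{m_0,k}(\lambda)$. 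Passing to polar coordinates and applying the weak formulation \eqref{eq-egienvlulues} of the spherical eigenvalue problem with test function $W(\rho\,\cdot)|_{\mathbb S^+}$ (which belongs to $H_{\rm odd}^1(\mathbb{S}^+,\theta_{N+1}^{1-2s})$ for a.e.\ $\rho$ since $W$ is odd in $y_N$, and is legitimate thanks to the regularity in Remark \ref{rem:regularity}) rewrites the left-hand side as $\mu_{m_0}\int_0^\lambda\rho^{N-1-2s}\varphi_{m_0,k}(\rho)\,d\rho$, whereas a scaling argument identifies the surface term with $\lambda^{N+1-2s}\varphi_{m_0,k}'(\lambda)$. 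This gives
\[
\lambda^{N+1-2s}\varphi_{m_0,k}'(\lambda)+\Upsilon_{m_0,k}(\lambda)=\mu_{m_0}\int_0^\lambda\rho^{N-1-2s}\varphi_{m_0,k}(\rho)\,d\rho\qquad\text{for a.e. }\lambda\in(0,r_0),
\]
the regularity quoted in Remark \ref{rem:regularity} ensuring $\varphi_{m_0,k},\Upsilon_{m_0,k}\in W^{1,1}_{\rm loc}((0,r_0])$, as in Remark \ref{coarea-well-defined}.

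Next I would solve this. Setting $\psi:=\lambda^{-m_0}\varphi_{m_0,k}$, substituting and differentiating, the lower-order terms cancel thanks again to $\mu_{m_0}=m_0(m_0+N-2s)$, and one is left with $\big(\lambda^{N+1-2s+2m_0}\psi'+\lambda^{m_0}\Upsilon_{m_0,k}\big)'=m_0\lambda^{m_0-1}\Upsilon_{m_0,k}$. From Cauchy--Schwarz and \eqref{ineq-H-upper-estimate} one has the a priori bound $|\varphi_{m_0,k}(\lambda)|\le\sqrt{2H(\lambda)}\le C\lambda^{m_0}$; together with the equation above and the estimate of $\Upsilon_{m_0,k}$ below, this forces $\lambda^{N+1-2s+2m_0}\psi'(\lambda)\to0$ and $\lambda^{m_0}\Upsilon_{m_0,k}(\lambda)\to0$ as $\lambda\to0^+$, so that integration on $(0,r)$ gives $r^{N+1-2s+2m_0}\psi'(r)+r^{m_0}\Upsilon_{m_0,k}(r)=m_0\int_0^r\rho^{m_0-1}\Upsilon_{m_0,k}(\rho)\,d\rho$ for all $r\in(0,r_0]$, hence an explicit expression for $\psi'$; integrating that on $(\lambda,r)$ and integrating by parts in the resulting double integral (where the factor $2m_0+N-2s$ arises from integrating $\rho^{-(N+1-2s+2m_0)}$) yields, after simplification and multiplication by $\lambda^{m_0}$, exactly \eqref{eq-Fourier-coefficents}, the remainder being $-\frac{m_0}{2m_0+N-2s}\lambda^{-m_0-N+2s}\int_0^\lambda\rho^{m_0-1}\Upsilon_{m_0,k}(\rho)\,d\rho$. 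To close the argument one estimates $\Upsilon_{m_0,k}$: combining \eqref{ineq-nabla-E}, \eqref{ineq-N-bounded} and \eqref{ineq-H-upper-estimate} gives $\int_{B^+_\lambda}t^{1-2s}|\nabla W|^2\,dz\le C\lambda^{N-2s+2m_0}$; feeding this and $\widetilde A-\mathrm{Id}_{N+1}=O(|z|)$ from \eqref{A-O} into the first two terms of \eqref{Upsilon}, and estimating the $\widetilde h$-term of \eqref{Upsilon} by \eqref{ineq-found}, \eqref{eta-f} and Hölder's inequality (whence the factor $\lambda^{\frac{4s^2\e}{N+2s\e}}$ carried by $\eta_{\widetilde h}$), one obtains $\int_0^\lambda\rho^{m_0-1}\Upsilon_{m_0,k}(\rho)\,d\rho=O(\lambda^{2m_0+N-2s+\frac{4s^2\e}{N+2s\e}})$, so the remainder above is $O(\lambda^{m_0+\frac{4s^2\e}{N+2s\e}})$; the same bounds make $\int_0^r\rho^{m_0-1}\Upsilon_{m_0,k}$ and $\int_0^r\rho^{-m_0-N-1+2s}\Upsilon_{m_0,k}$ convergent, so \eqref{eq-Fourier-coefficents} is meaningful.

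The hardest part is the rigorous justification of the first step: one must ensure that $\varphi_{m_0,k}$ and $\Upsilon_{m_0,k}$ are absolutely continuous — so that the differentiations and integrations by parts are allowed — and that the boundary integrals over $S^+_\lambda$ in \eqref{Upsilon} are well defined and continuous in $\lambda$. This rests entirely, as in Remark \ref{coarea-well-defined}, on the Sobolev regularity $\nabla_x W\in H^1(B^+_r,t^{1-2s})$ and $t^{1-2s}\partial_t W\in H^1(B^+_r,t^{2s-1})$ from \cite[Theorem 2.1]{fellisiclari}. A secondary technical point is the careful bookkeeping of exponents in the last step, which is where the precise rate $\lambda^{\frac{4s^2\e}{N+2s\e}}$ originates.
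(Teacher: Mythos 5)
Your overall strategy is the same as the paper's: reduce to an Euler-type ODE for $\varphi_{m_0,k}$ whose inhomogeneity is encoded by $\Upsilon_{m_0,k}$, integrate it twice, identify the ``constant of integration'' by an a priori bound coming from the monotonicity results, and obtain the error $O(\la^{m_0+\frac{4s^2\e}{N+2s\e}})$ from the bound $\int_0^\la\rho^{m_0-1}|\Upsilon_{m_0,k}|\,d\rho\le \la^{2m_0+N-2s}\int_0^\la \rho^{-m_0-N-1+2s}|\Upsilon_{m_0,k}|\,d\rho=O(\la^{2m_0+N-2s+\frac{4s^2\e}{N+2s\e}})$. You differ in two places. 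First, you derive the equation in once-integrated form by testing \eqref{formula-div} with the degree-zero homogeneous extension of $Y_{m_0,k}$ and using \eqref{eq-egienvlulues}, whereas the paper tests the weak equation \eqref{eq-extension-straith} with $|z|^{-N-1+2s}\phi(|z|)Y_{m_0,k}(z/|z|)$, $\phi\in\mathcal D(0,r_0)$, and gets the distributional second-order ODE; your identity is correct (your bracket is exactly \eqref{Upsilon}) and the two forms are equivalent, both resting on the regularity of Remark \ref{rem:regularity}. Second, you fix the constant via the pointwise bound $|\varphi_{m_0,k}(\la)|\le\sqrt{2H(\la)}\le C\la^{m_0}$ from \eqref{ineq-H-upper-estimate}, while the paper rules out the singular branch by showing it would violate $\int_0^{r_0}\la^{N-1-2s}|\varphi_{m_0,k}|^2\,d\la<+\infty$, which follows from Parseval and the Hardy inequality \eqref{ineq-Hardy}. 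Your input is stronger but already available, so this variant is legitimate and slightly more direct.

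There is, however, one step that does not hold as you justify it: the claim that the bounds force $\la^{N+1-2s+2m_0}\psi'(\la)\to0$ and $\la^{m_0}\Upsilon_{m_0,k}(\la)\to0$ \emph{pointwise} as $\la\to0^+$ (with $\psi=\la^{-m_0}\varphi_{m_0,k}$). The spherical term $\int_{S_\la^+}t^{1-2s}(\widetilde A-\mathop{\rm Id}\nolimits_{N+1})\nabla W\cdot\frac{z}{|z|}Y_{m_0,k}\,dS$ in \eqref{Upsilon} admits no pointwise-in-$\la$ control: the monotonicity machinery bounds only volume integrals of $t^{1-2s}|\nabla W|^2$, so this term is controlled only after integration in the radius — this is precisely why the paper estimates it through an integration by parts in $\la$ before bounding it, and the same remark applies to your phrase ``feeding this into the first two terms of \eqref{Upsilon}'': the volume gradient bound cannot be inserted pointwise into the surface term, only into its radial integral. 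Fortunately you do not need the two separate limits: it suffices that $G(\la):=\la^{N+1-2s+2m_0}\psi'(\la)+\la^{m_0}\Upsilon_{m_0,k}(\la)$ tends to $0$. Its limit exists because $\rho^{m_0-1}\Upsilon_{m_0,k}\in L^1(0,r)$, and if the limit were $L\neq0$, then writing $\psi'(\rho)=\rho^{-2m_0-N-1+2s}\big(G(\rho)-\rho^{m_0}\Upsilon_{m_0,k}(\rho)\big)$ and using the convergence of $\int_0^r\rho^{-m_0-N-1+2s}|\Upsilon_{m_0,k}|\,d\rho$, an integration over $(\la,r)$ would give $|\psi(\la)|\sim\frac{|L|}{2m_0+N-2s}\la^{-2m_0-N+2s}\to+\infty$, contradicting your own bound $|\psi|\le C$. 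This repair is essentially the paper's contradiction argument transplanted into your setting; with it (and with the radial integration by parts in the estimate of the surface term of $\Upsilon_{m_0,k}$) your proof is complete.
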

\begin{proof}
  Let $k \in \{1, \dots, M_{m_0}\}$ and $\phi \in
  \mathcal{D}(0,r_0)$. Testing \eqref{eq-extension-straith} with
  $|z|^{-N-1+2s} \phi(|z|)Y_{m_0,k}\big(\frac{z}{|z|}\big)$, since
  $Y_{m_0,k}$ solves \eqref{eq-egienvlulues}, we obtain that
  $\varphi_{m_0,k}$ satisfies
\begin{equation}\label{eq-Fourier:1}
  -\varphi''_{m_0,k}-\frac{N+1-2s}{\la}\varphi'_{m_0,k}+
  \frac{\mu_{m_0}}{\la^2}\varphi_{m_0,k}= \zeta_{m_0,k}
\end{equation} 
in the sense of distributions in $(0,r_0)$, where 
\begin{multline*}
  \sideset{_{\mathcal{D}'(0,r_0)}}{_{\mathcal{D}(0,r_0)}}{\mathop{\langle\zeta_{m_0,k},
    \phi\rangle}}:= 
    \kappa_{N,s}\int_0^{r_0}\frac{\phi(\la)}{\la^{2-2s}}
    \left(\int_{\mathbb{S}'}
    \widetilde{h}(\la \theta') \Tr(W(\la\cdot))(\theta')
    Y_{m_0,k}(\theta',0)
    \, dS' \right)d\la\\
  -\int_0^{r_0}\left(\int_{S^+_\lambda}  t^{1-2s} (A-\mathop{\rm
    Id}\nolimits_{N+1})
    \nabla W \cdot\nabla( |z|^{-N-1+2s}
    \phi(|z|)Y_{m_0,k}\big(\tfrac{z}{|z|}
    \big))\,dS \right) d\la. 
\end{multline*}
Furthermore, it is easy to verify that $\Upsilon_{m_0,k} \in L^1(0,r_0)$
and 
\begin{equation*}
\Upsilon_{m_0,k}'(\la)=\la^{N+1-2s}\zeta_{m_0,k}(\la)
\end{equation*}
in the sense of distributions in $(0,r_0)$. Then equation
\eqref{eq-Fourier:1} can be rewritten as follows
\begin{equation}\label{eq-Fourier:4}
  -(\la^{2m_0+N+1-2s}(\la^{-m_0}\varphi_{m_0,k}(\la))')'
  =\la^{m_0}\Upsilon_{m_0,k}'(\la)
\end{equation}
in the sense of distributions in $(0,r_0)$. Integrating
\eqref{eq-Fourier:4} over $(\la,r)$ for any $r \in (0,r_0]$, we obtain
that there exists a constant $ c_{m_0,k}(r)\in \mathbb{R}$ which
depends only on $m_0,k,r$, such that
\begin{multline*}
(\la^{-m_0}\varphi_{m_0,k}(\la))'=-\la^{-m_0-N-1+2s}\Upsilon_{m_0,k}(\la)
\\-m_0\la^{-2m_0-N-1+2s}\left( c_{m_0,k}(r)+
  \int_\la^{r}\rho^{m_0-1}\Upsilon_{m_0,k}(\rho) \, d\rho\right)
\end{multline*}
in the sense of distributions in $(0,r_0)$. In particular we deduce
that $\varphi_{m_0,k}\in W^{1,1}_{\rm loc}((0,r_0])$ and a further
integration over $(\la,r)$ gives 
\begin{align}\label{eq-Fourier:6}
\varphi_{m_0,k}(\la)=&\la^{m_0}\left(\frac{\varphi_{m_0,k}(r)}{r^{m_0}}-\frac{m_0c_{m_0,k}(r)}{(2m_0+N-2s)r^{2m_0+N-2s}}\right)\\
&+\la^{m_0}\frac{m_0+N-2s}{2m_0+N-2s}\int_{\la}^r\rho^{-m_0-N-1+2s}\Upsilon_{m_0,k}(\rho) \, d\rho \notag \\
&+ \frac{m_0\la^{-m_0-N+2s}}{2m_0+N-2s}\left(c_{m_0,k}(r)+\int_\la^r\rho^{m_0-1}\Upsilon_{m_0,k}(\rho) \, d\rho\right) \notag
\end{align}
for every $\la,r \in (0,r_0]$. 
Now we claim that  
\begin{equation}\label{eq-Fourier:7}
\int_0^{r_0}\rho^{-m_0-N-1+2s}|\Upsilon_{m_0,k}(\rho)| \, d\rho <+\infty.
\end{equation}
By the H\"older inequality, a change of variables, \eqref{A-O},
\eqref{blown-up-solution}, Proposition \ref{prop-V-H1-bounded}, and
\eqref{ineq-H-upper-estimate} we have that
\begin{align}\label{eq-Fourier:8}
  &\la^{-m_0-N-1+2s}\left|\int_{B_{\la}^+} t^{1-2s} (\widetilde A-\mathop{\rm
    Id}\nolimits_{N+1})
    \nabla W \cdot \frac{1}{|z|}\nabla_{\mathbb{S}}
    Y_{m_0,k}\big(\tfrac{z}{|z|}\big)\, dz
    \right| \\
  & \le\la^{-m_0-N-1+2s}\left(\int_{B_{\la}^+}\! t^{1-2s}
    |(\widetilde A-\mathop{\rm Id}
    \nolimits_{N+1})\nabla W|^2 \, dz\right)^{\!\!\frac{1}{2}} \!\!\left(\int_{B_{\la}^+}\!
    \frac{t^{1-2s}}{|z|^2}\left|\nabla_{\mathbb{S}}Y_{m_0,k}\big(\tfrac{z}{|z|}\big)\right|^2
    \,dz\right)^{\!\!\frac{1}{2}} \notag\\
  &\le\la^{-m_0-1}O(\la)\sqrt{H(\la)}\left(\int_{B_{1}^+} t^{1-2s}
    |\nabla V^\la|^2 \, dz
    \right)^{\!\!\frac{1}{2}}\!\!
    \left(\int_{B_1^+}\frac{t^{1-2s}}{|z|^2}
    \left|\nabla_{\mathbb{S}}Y_{m_0,k}\big(\tfrac{z}{|z|}\big)\right|^2\,dz
    \right)^{\!\!\frac{1}{2}} \notag\\
  & \le {\rm{const}} \, \la^{-m_0}\sqrt{H(\la)} \le \mathop{\rm{const}}  ,\notag
\end{align}
where we used the fact that
\begin{equation}
\begin{split}
  \int_{B_1^+}\frac{t^{1-2s}}{|z|^2}
  \left|\nabla_{\mathbb{S}}Y_{m_0,k}
    \big(\tfrac{z}{|z|}\big)\right|^2\,dz &= \int_0^1
  \rho^{N-1-2s}\left(
    \int_{\mathbb{S}^+}
    \theta_{N+1}^{1-2s}|\nabla_{\mathbb{S}}Y_{m_0,k}(\theta)|^2\, dS
  \right)\, d\rho\\
  &= \frac{m_0^2+m_0(N-2s)}{N-2s}.
\end{split}
\end{equation}
Dealing with the second term of \eqref{Upsilon}, 
from an integration by parts, the H\"older inequality,
\eqref{A-O} \eqref{blown-up-solution}, Proposition
\ref{prop-V-H1-bounded}, and \eqref{ineq-H-upper-estimate} it follows
that, for every $r\in (0,r_0]$,
\begin{align}\label{eq-Fourier:9}
  \int_0^{r}&\la^{-m_0-N-1+2s}\left|\int_{S_{\la}^+} t^{1-2s}
    (\widetilde A-
    \mathop{\rm Id}\nolimits_{N+1})\nabla W \cdot
    \frac{z}{|z|}Y_{m_0,k} \big(\tfrac{z}{|z|}\big)\, dS
    \right|d\la\\
  \le&\mathop{\rm const}\int_0^{r}\la^{-m_0-N+2s}\left(\int_{S_{\la}^+}t^{1-2s}
       |\nabla W|\left|Y_{m_0,k} \big(\tfrac{z}{|z|}\big)\right|\, dS\right)d\la \notag\\
  =&\mathop{\rm const}\bigg(r^{-m_0-N+2s}\int_{B_{r}^+}t^{1-2s}|\nabla
     W|\left|Y_{m_0,k} \big(\tfrac{z}{|z|}\big)\right|\, dz
     \notag\\
  &\qquad\qquad+(m_0+N-2s)\int_0^{r}\la^{-m_0-N-1+2s}\bigg(\int_{B_{\la}^+}t^{1-2s}
    |\nabla W|\left|Y_{m_0,k}\big(\tfrac{z}{|z|}\big)\right|\, dz\bigg)d\la\bigg)\notag\\ 
  \leq&\mathop{\rm const}\left(r^{-m_0+1}\sqrt{H(r)}+\int_0^{r}\la
     ^{-m_0}\sqrt{H}(\la)
     \,d\la\right)\le \mathop{\rm const}\,r,\notag  
\end{align}
taking into account that 
\begin{equation}
  \int_{B^+_\la}t^{1-2s}\left|Y_{m_0,k}\big(\tfrac{z}{|z|}\big)\right|^2\, dz
  =\frac{\lambda^{N+2-2s}}{N+2-2s}. 
\end{equation}
By the
H\"older inequality the third term in \eqref{Upsilon} can be estimated as
\begin{align}\label{eq-Fourier:10}
  &\la^{-m_0-N-1+2s}\left|\int_{B_{\la}'}\widetilde{h}(y)
    \Tr(W)\Tr\left( Y_{m_0,k}
    \big(\tfrac{y}{|y|}\big)\right)\, dy\right| \\
  &\le \la^{-m_0-N-1+2s}\left(\int_{B_{\la}'}|\widetilde{h}(y)| |\Tr(W)|^2\,dy
    \right)^{\!\!\frac{1}{2}}\left(\int_{B_{\la}'}|\tilde{h}(y)|\left|\Tr\left(
    Y_{m_0,k}\big(\tfrac{y}{|y|}\big)
    \right)\right|^2 dy\right)^{\!\!\frac{1}{2}} \notag \\
  &\le \la^{-m_0-N-1+2s}
    \eta_{|\tilde{h}|}(\la)\left(\int_{B_\la^+}t^{1-2s} |\nabla W|^2\,
    dz
    +\frac{N-2s}{2\la}\int_{S_\la^+}t^{1-2s} W^2 \, dS\right)^{\!\!\frac{1}{2}}\times\notag \\
  & \qquad\qquad\times \left(\int_{B_\la^+}t^{1-2s} \left|\nabla
    Y_{m_0,k}
    \big(\tfrac{z}{|z|}\big)\right|^2\,
    dz+\frac{N-2s}{2\la}
    \int_{S_\la^+}
    t^{1-2s}\left|Y_{m_0,k}\big(\tfrac{z}{|z|}\big)\right|^2
    \, dS\right)^{\!\!\frac{1}{2}}\notag \\
  &\le
    \la^{-m_0-1}\eta_{|\tilde{h}|}(\la)\sqrt{H(\la)}\left(\int_{B_1^+}\!t^{1-2s}
    |\nabla V^\la|^2 dz+(N-2s)\!\int_{\mathbb{S}^+}\!\theta_{N+1}^{1-2s}
    \mu(\la \theta)|V^\la|^2 \, dS\right)^{\!\!\frac{1}{2}}\!\times\notag \\
  & \qquad\qquad\times \left(\la^2\int_{B_1^+}t^{1-2s} \left|\nabla
    Y_{m_0,k}\big(\tfrac{z}{|z|}\big)\right|^2\, dz+
    \frac{N-2s}{2}\int_{\mathbb{S}^+}
    \theta_{N+1}^{1-2s}|Y_{m_0,k}(\theta)|^2 \, dS
    \right)^{\!\!\frac{1}{2}}\notag \\
  &\le \mathop{\rm const}
    \la^{-m_0-1}\eta_{|\tilde{h}|}(\la)\sqrt{H(\la)} \le  \mathop{\rm const}
    \la^{-1+\frac{4s^2 \e}{N+2s\e}}, \notag 
\end{align}
in view of \eqref{ineq-found}, \eqref{eta-f},
\eqref{mu-estimates}, \eqref{ineq-H-upper-estimate}, \eqref{blown-up-solution},
\eqref{eq-V-1-boundary} and Proposition \ref{prop-V-H1-bounded}.
Collecting estimates 
\eqref{eq-Fourier:8}, \eqref{eq-Fourier:9} and \eqref{eq-Fourier:10} we
deduce that, for every $r\in (0,r_0]$,
\begin{equation}\label{eq-Fourier:11}
  \int_0^{r}\rho^{-m_0-N-1+2s}|\Upsilon_{m_0,k}(\rho)| \, d\rho \le
  \mathop{\rm const} \left(r+ \int_0^{r}\rho^{-1+\frac{4s^2
        \e}{N+2s\e}}
    \, d\rho\right) \le  \mathop{\rm const} r^{\frac{4s^2 \e}{N+2s\e}},
\end{equation}
thus proving \eqref{eq-Fourier:7}. Moreover we have that
\begin{equation}\label{eq-Fourier:12}
\int_0^{r_0}\rho^{m_0-1}|\Upsilon_{m_0,k}(\rho)|\, d\rho <  +\infty,
\end{equation}
as a consequence of \eqref{eq-Fourier:7}, since in a
  neighbourhood of 0, $\rho^{m_0-1}\leq\rho^{-m_0-N-1+2s}$.

Now we claim that, for every $r \in (0,r_0]$,
\begin{equation}\label{eq-Fourier:13}
c_{m_0,k}(r)+\int_0^r\rho^{m_0-1}\Upsilon_{m_0,k}(\rho) \, d\rho=0
\end{equation}
To prove \eqref{eq-Fourier:13} we argue by contradiction. If there
exists $r\in (0,r_0]$ such that \eqref{eq-Fourier:13} does not hold
true, then by \eqref{eq-Fourier:6}, \eqref{eq-Fourier:7} and
\eqref{eq-Fourier:12}
\begin{equation*}
  \varphi_{m_0,k}(\la) \sim \frac{m_0\la^{-m_0-N+2s}}{2m_0+N-2s}
  \left(c_{m_0,k}(r)+\int_0^r\rho^{m_0-1}\Upsilon_{m_0,k}(\rho) \,
    d\rho\right)
  \quad \text{as } \la \to 0^+.
\end{equation*}
From this, it follows that
\begin{equation}\label{eq-Fourier:15}
\int_0^{r_0}\la^{N-1-2s}|\varphi_{m_0,k}(\la)|^2 d \la =+\infty,
\end{equation}
since $N-2s+2m_0>0$.  On the other hand, from
\eqref{Fourier-coefficents}, the Parseval identity and
\eqref{ineq-Hardy} we deduce the following estimate
\begin{multline*}
  \int_0^{r_0}\la^{N-1-2s}|\varphi_{m_0,k}(\la) |^2 \, d \la
    \leq\int_0^{r_0}\la^{N-1-2s}\left(\int_{\mathbb{S}^+}
    \theta^{1-2s}_{N+1}
    |W(\la \theta)|^2  \, dS \right) d \la \\
  = \int_0^{r_0}\la^{-2}\left(\int_{S_\la^+} t^{1-2s}|W|^2 \, dS
    \right)d \la
    = \int_{B_{r_0}^+} t^{1-2s}\frac{|W(z)|^2}{|z|^2}\, dz< + \infty, 
  \end{multline*}
  which contradicts \eqref{eq-Fourier:15}. Hence \eqref{eq-Fourier:13}
  is proved.  From \eqref{eq-Fourier:13} and
  \eqref{eq-Fourier:11} it follows that, for every $r \in (0,r_0]$,
\begin{multline}\label{eq-Fourier:17}
  \la^{-m_0-N+2s}\left|c_{m_0,k}(r)+\int_\la^r\rho^{m_0-1}\Upsilon_{m_0,k}(\rho)
    \, d\rho\right|
  =\la^{-m_0-N+2s}\left|\int_0^\la\rho^{m_0-1}\Upsilon_{m_0,k}(\rho)
    \, d\rho\right|\\
  \le\la^{-m_0-N+2s}\left(\la^{2m_0+N-2s}\int_0^\la\rho^{-m_0-N-1+2s}|\Upsilon_{m_0,k}(\rho)|
    \, d\rho\right) \le \mathrm{const}\,
  \la^{m_0+\frac{4s^2\varepsilon}{N+2s\varepsilon}}.
\end{multline}
We finally deduce \eqref{eq-Fourier-coefficents} combining
\eqref{eq-Fourier:6}, \eqref{eq-Fourier:13} and \eqref{eq-Fourier:17}.
\end{proof}

\begin{proposition}
Let $\gamma$ be as in \eqref{limit-N}. Then
\begin{equation}\label{limit-H-positve}
\lim_{\la \to 0^+}\la^{-2\gamma}H(\la)>0.
\end{equation}
\end{proposition}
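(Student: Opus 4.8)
The plan is to argue by contradiction. By Proposition~\ref{prop-limit-H} the limit $\ell:=\lim_{\la\to0^+}\la^{-2\gamma}H(\la)$ exists, is finite and non-negative, so it suffices to exclude $\ell=0$. Throughout I write $m_0=\gamma\in\mathbb{N}\setminus\{0\}$ as in Proposition~\ref{prop-blow-up-sub} and I work with the Fourier coefficients $\varphi_{m_0,k}$, $k\in\{1,\dots,M_{m_0}\}$, introduced in \eqref{Fourier-coefficents}.

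First I would extract the radial asymptotics of $\varphi_{m_0,k}$ from identity \eqref{eq-Fourier-coefficents}. Dividing \eqref{eq-Fourier-coefficents} by $\la^{m_0}$ and letting $\la\to0^+$, and using that $\int_\la^r\rho^{-m_0-N-1+2s}\Upsilon_{m_0,k}(\rho)\,d\rho$ converges as $\la\to0^+$ by \eqref{eq-Fourier:7} while $\la^{\frac{4s^2\e}{N+2s\e}}\to0$, one sees that for every $k$ the limit $\beta_k:=\lim_{\la\to0^+}\la^{-m_0}\varphi_{m_0,k}(\la)$ exists in $\R$, and that subtracting this limit in \eqref{eq-Fourier-coefficents} yields $\la^{-m_0}\varphi_{m_0,k}(\la)=\beta_k-\frac{m_0+N-2s}{2m_0+N-2s}\int_0^\la\rho^{-m_0-N-1+2s}\Upsilon_{m_0,k}(\rho)\,d\rho+O(\la^{\frac{4s^2\e}{N+2s\e}})$ as $\la\to0^+$.

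Next I would invoke the blow-up classification. Let $\la_n\to0^+$; by Proposition~\ref{prop-blow-up-sub} there exist a subsequence $\{\la_{n_j}\}$ and an eigenfunction $\Psi$ of \eqref{prob-eigenvalues} relative to $\mu_{m_0}=m_0^2+m_0(N-2s)$, with $\norm{\Psi}_{L^2(\mathbb{S}^+,\theta_{N+1}^{1-2s})}=1$, such that $V^{\la_{n_j}}\to|z|^{m_0}\Psi(z/|z|)$ strongly in $H^1(B_1^+,t^{1-2s})$. Passing to traces on $S_1^+=\mathbb{S}^+$ through the compact operator \eqref{trace-Sr+} and testing against $Y_{m_0,k}$ gives $\varphi_{m_0,k}(\la_{n_j})/\sqrt{H(\la_{n_j})}\to c_k$, where the numbers $c_k:=\int_{\mathbb{S}^+}\theta_{N+1}^{1-2s}\Psi Y_{m_0,k}\,dS$ are the coordinates of $\Psi$ in the orthonormal basis $\{Y_{m_0,k}\}_{k=1}^{M_{m_0}}$ of the eigenspace $V_{m_0}$; in particular $\sum_{k=1}^{M_{m_0}}c_k^2=\norm{\Psi}_{L^2(\mathbb{S}^+,\theta_{N+1}^{1-2s})}^2=1$, so $c_k\neq0$ for at least one $k$. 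Writing $\varphi_{m_0,k}(\la_{n_j})/\sqrt{H(\la_{n_j})}=\bigl(\la_{n_j}^{-m_0}\varphi_{m_0,k}(\la_{n_j})\bigr)/\sqrt{\la_{n_j}^{-2m_0}H(\la_{n_j})}$, the numerator tends to $\beta_k$ while, under the assumption $\ell=0$, the denominator tends to $0$; since the quotient tends to the finite number $c_k$, this forces $\beta_k=0$ for every $k$.

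Finally comes the decisive bootstrap. Once $\beta_k=0$ for all $k$, the identity obtained in the first step together with the integrability estimate \eqref{eq-Fourier:11} yields $|\varphi_{m_0,k}(\la)|\le C\,\la^{m_0+\frac{4s^2\e}{N+2s\e}}$ for all small $\la$ and all $k$. On the other hand, applying \eqref{ineq-H-lower-estimate} with any fixed $\sigma\in\bigl(0,\frac{8s^2\e}{N+2s\e}\bigr)$ gives $\sqrt{H(\la)}\ge c_\sigma^{1/2}\,\la^{m_0+\sigma/2}$ with $\sigma/2<\frac{4s^2\e}{N+2s\e}$; hence $\varphi_{m_0,k}(\la_{n_j})/\sqrt{H(\la_{n_j})}\to0$ for every $k$, so $c_k=0$ for all $k$, contradicting $\sum_k c_k^2=1$. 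This contradiction proves \eqref{limit-H-positve}. I expect the main obstacle to be precisely this last step: upgrading the qualitative information $\beta_k=0$ to the quantitative decay $|\varphi_{m_0,k}(\la)|=O(\la^{m_0+\frac{4s^2\e}{N+2s\e}})$, which is exactly what makes the lower bound \eqref{ineq-H-lower-estimate} strong enough to annihilate the blow-up limit; the remaining steps are routine given the monotonicity estimates and the expansion \eqref{eq-Fourier-coefficents}.
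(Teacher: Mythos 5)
Your argument is correct and follows essentially the same route as the paper: contradiction, the expansion \eqref{eq-Fourier-coefficents} combined with \eqref{eq-Fourier:11} to obtain $\varphi_{m_0,k}(\la)=O\big(\la^{m_0+\frac{4s^2\e}{N+2s\e}}\big)$, the lower bound \eqref{ineq-H-lower-estimate}, and the normalization of the blow-up limit from Proposition \ref{prop-blow-up-sub}. The only cosmetic difference is that the paper deduces $\lim_{\la\to0^+}\la^{-m_0}\varphi_{m_0,k}(\la)=0$ directly from the Parseval identity \eqref{eq-limit-positive:1} under the contradiction hypothesis, whereas you obtain the same vanishing of the leading coefficients $\beta_k$ through the quotient $\varphi_{m_0,k}/\sqrt{H}\to c_k$; both steps are valid.
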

\begin{proof}
By \eqref{mu-O}, the Parseval identity and \eqref{Fourier-coefficents}
we have that 
\begin{equation}\label{eq-limit-positive:1}
H(\la)=\int_{\mathbb{S}^+} \theta_{N+1}^{1-2s}
\mu(\la\theta)|W(\la\theta)|^2 \, dS
=(1+O(\la))\sum_{m=1}^\infty \sum_{k=1}^{M_m}|\varphi_{m,k}(\la)|^2.
\end{equation}
Let $m_0\in \mathbb{N} \setminus \{0\}$ be such that $\gamma=m_0$
according to Proposition \ref{prop-blow-up-sub}. We argue by
contradiction and assume that
$0=\lim_{\la \to 0^+}\la^{-2\gamma}H(\la)=\lim_{\la \to 0^+}\la^{-2m_0}H(\la)$.
In view of \eqref{eq-limit-positive:1} this would imply that
\begin{equation*}
  \lim_{\la \to 0^+}\la^{-m_0}\varphi_{m_0,k}(\la)=0
  \quad \text{for every } k \in \{1, \dots,M_{m_0}\}.
\end{equation*}
Therefore, from \eqref{eq-Fourier-coefficents} it follows that, for
all $k \in \{1, \dots,M_{m_0}\}$ and $r\in (0,r_0]$,
\begin{multline*}
  \frac{\varphi_{m_0,k}(r)}{r^{m_0}}+\frac{m_0r^{-2m_0-N+2s}}{2m_0+N-2s}
  \int_0^r\rho^{m_0-1}\Upsilon_{m_0,k}(\rho) \, d\rho\\
  +\frac{m_0+N-2s}{2m_0+N-2s}\int_{0}^r\rho^{-m_0-N-1+2s}\Upsilon_{m_0,k}(\rho)
  \, d\rho=0,
\end{multline*}
so that, substituting into \eqref{eq-Fourier-coefficents}, we obtain
that
\begin{equation*}
  \varphi_{m_0,k}(\la)=-\frac{m_0+N-2s}{2m_0+N-2s}\la^{m_0}
  \int_{0}^\la\rho^{-m_0-N-1+2s}\Upsilon_{m_0,k}(\rho)
  \, d\rho
  +O\left(\la^{m_0+\frac{4s^2\e}{N+2s\e}}\right) 
\end{equation*}
as  $\la \to 0^+$.
Hence, from \eqref{eq-Fourier:11} we infer that
\begin{equation}\label{eq-limit-positive:6}
  \varphi_{m_0,k}(\la)=O\left(\la^{m_0+\frac{4s^2\e}{N+2s\e}}\right)
  \quad \text{ as } \la \to 0^+ \quad \text{for all } k \in \{1, \dots,M_{m_0}\}.
\end{equation}
Moreover, estimate \eqref{ineq-H-lower-estimate}
with $\sigma=\frac{2s^2\e}{N+2s\e}$ implies that 
\begin{equation}\label{eq:hso}
  \frac{1}{\sqrt{H(\la)}}=O\left(\la^{-m_0-\frac{2s^2\varepsilon}{N+2s\varepsilon}}
  \right)\quad
  \text{ as } \la \to 0^+.
\end{equation}
Since 
\begin{equation*}
\varphi_{m_0,k}(\la)=\sqrt{H(\la)}\int_{\mathbb{S}^+}\theta_{N+1}^{1-2s} V^\la(\theta) Y_{m_0,k}(\theta) \, dS \quad \text{for all } k \in \{1, \dots,M_{m_0}\}
\end{equation*}
by \eqref{Fourier-coefficents} and \eqref{blown-up-solution}, from 
\eqref{eq-limit-positive:6} and \eqref{eq:hso} we deduce that
\begin{equation}\label{eq-limit-positive:8}
  \int_{\mathbb{S}^+}\theta_{N+1}^{1-2s} V^{\la}(\theta) \Psi(\theta)\,
  dS=O\left(\la^{\frac{2s^2\e}{N+2s\e}}\right) \quad \text{ as } \la \to 0^+,
\end{equation}
for every $\Psi \in \mathop{\rm Span}\{Y_{m_0,k}: k\in\{1,\dots M_{m_0}\}\}$. By
\eqref{dimension-egigenspaces}, \eqref{orthonormal-base},
\eqref{trace-Sr+} and Proposition \ref{prop-blow-up-sub}, for any
sequence $\lambda_n\rightarrow 0^+$, there exist a
subsequence $\la_{n_h} \to 0^+$ and
$\Psi \in \mathop{\rm Span}\{Y_{m_0,k}: k\in\{1,\dots M_{m_0}\}\}$ such that
$\norm{\Psi}_{L^2(\mathbb{S}^+,\theta_{N+1}^{1-2s})}=1$ and
\begin{equation*}
\lim_{h \to +\infty}\int_{\mathbb{S}^+}\theta_{N+1}^{1-2s}
V^{\la_{n_h}}(\theta)
\Psi(\theta) \, dS=\int_{\mathbb{S}^+}\theta_{N+1}^{1-2s} |\Psi|^2 \, dS=1,
\end{equation*}
thus contradicting \eqref{eq-limit-positive:8}.
\end{proof}

\begin{theorem}\label{theor-blow-up-straight}
  Let $W$ be a non trivial weak solution to
  \eqref{prob-extension-straith}. Let $\gamma$ be as in
  \eqref{limit-N} and $m_0\in \mathbb{N}\setminus\{0\}$ be such
  that $\gamma=m_0$, according to Proposition
  \ref{prop-blow-up-sub}. Let
  $\{Y_{m_0,k}\}_{k\in\{1,\dots,M_{m_0}\}}$ be as in
  \eqref{orthonormal-base}, with $V_{m_0}$ and $M_{m_0}$ 
  defined as in \eqref{Eigenspaces} and
  \eqref{dimension-egigenspaces} respectively. Then
\begin{equation*}
\la^{-m_0}W(\la z) \to |z|^{m_0}\sum_{k=1}^{M_{m_0}} \beta_k
Y_{m_0,k}\left(\frac{z}{|z|}\right)
\quad \text{as }\la \to 0^+ \quad \text{strongly in } H^1(B_1^+,t^{1-2s}),
\end{equation*}
where $(\beta_1,\dots,\beta_{M_{m_0}})\neq (0,\dots, 0)$ and, for
every $k\in\{1,\dots,M_{m_0}\}$,
\begin{multline}\label{beta-k}
\beta_k=\frac{\varphi_{m_0,k}(r)}{r^{m_0}}+\frac{m_0r^{-2m_0-N+2s}}{(2m_0+N-2s)}\int_0^r\rho^{m_0-1}\Upsilon_{m_0,k}(\rho) \, d\rho\\
+\frac{m_0+N-2s}{2m_0+N-2s}\int_{0}^r\rho^{-m_0-N-1+2s}\Upsilon_{m_0,k}(\rho) \, d\rho,
\end{multline}
for all $r \in (0,r_0]$, where $\varphi_{m_0,k} $ is defined
in \eqref{Fourier-coefficents} and $\Upsilon_{m_0,k}$ in
\eqref{Upsilon} .
\end{theorem}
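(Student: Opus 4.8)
The plan is to upgrade the subsequential blow-up convergence of Proposition \ref{prop-blow-up-sub} to a genuine limit by means of a Urysohn-type argument, using the strict positivity of the blow-up normalization recorded in \eqref{limit-H-positve} and the asymptotic expansion \eqref{eq-Fourier-coefficents} of the Fourier coefficients $\varphi_{m_0,k}$ to pin down, independently of the blow-up sequence, the limit profile and the coefficients $\beta_k$.

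First I would fix the normalization. With $V^\la$ as in \eqref{blown-up-solution} one has $\la^{-m_0}W(\la z)=\sqrt{\la^{-2m_0}H(\la)}\,V^\la(z)$, and by Proposition \ref{prop-limit-H} together with \eqref{limit-H-positve} the limit $L:=\lim_{\la\to0^+}\sqrt{\la^{-2m_0}H(\la)}$ exists and belongs to $(0,+\infty)$. Hence, given any sequence $\la_n\to0^+$, Proposition \ref{prop-blow-up-sub} yields a subsequence $\la_{n_k}$ and an eigenfunction $\Psi$ of \eqref{prob-eigenvalues} associated with $\mu_{m_0}$, with $\|\Psi\|_{L^2(\mathbb{S}^+,\theta_{N+1}^{1-2s})}=1$ and $V^{\la_{n_k}}\to|z|^{m_0}\Psi(z/|z|)$ in $H^1(B_1^+,t^{1-2s})$; multiplying by $\sqrt{\la_{n_k}^{-2m_0}H(\la_{n_k})}\to L$ gives $\la_{n_k}^{-m_0}W(\la_{n_k}z)\to L|z|^{m_0}\Psi(z/|z|)$ in $H^1(B_1^+,t^{1-2s})$.

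Next I would identify the limit $L\Psi$ via its Fourier coefficients. Dividing \eqref{eq-Fourier-coefficents} by $\la^{m_0}$ and letting $\la\to0^+$, the remainder vanishes and $\int_\la^r\to\int_0^r$ converges by \eqref{eq-Fourier:7}, so that $\la^{-m_0}\varphi_{m_0,k}(\la)\to\beta_k$ for every $k$, with $\beta_k$ precisely the right-hand side of \eqref{beta-k}; since the left-hand side is manifestly independent of $r$, this simultaneously shows that \eqref{beta-k} is well posed. On the other hand, the compactness of the trace operator in \eqref{trace-Sr+} gives $V^{\la_{n_k}}\to\Psi$ in $L^2(\mathbb{S}^+,\theta_{N+1}^{1-2s})$, hence, recalling \eqref{Fourier-coefficents}, $\la_{n_k}^{-m_0}\varphi_{m_0,k}(\la_{n_k})\to\int_{\mathbb{S}^+}\theta_{N+1}^{1-2s}(L\Psi)Y_{m_0,k}\,dS$; comparing with the previous limit yields $\int_{\mathbb{S}^+}\theta_{N+1}^{1-2s}(L\Psi)Y_{m_0,k}\,dS=\beta_k$ for all $k$. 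Since $\Psi$, being an eigenfunction associated with $\mu_{m_0}$, lies in the eigenspace $V_{m_0}=\mathrm{Span}\{Y_{m_0,k}:k=1,\dots,M_{m_0}\}$ of \eqref{Eigenspaces}, we conclude $L\Psi=\sum_{k=1}^{M_{m_0}}\beta_kY_{m_0,k}$, so the limit $L|z|^{m_0}\Psi(z/|z|)=|z|^{m_0}\sum_{k=1}^{M_{m_0}}\beta_kY_{m_0,k}(z/|z|)$ does not depend on the subsequence.

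Finally, since every sequence $\la_n\to0^+$ admits a subsequence along which $\la_n^{-m_0}W(\la_n\cdot)$ converges in $H^1(B_1^+,t^{1-2s})$ to one and the same limit, the full convergence $\la^{-m_0}W(\la\cdot)\to|z|^{m_0}\sum_{k=1}^{M_{m_0}}\beta_kY_{m_0,k}(z/|z|)$ as $\la\to0^+$ follows. The coefficient vector $(\beta_1,\dots,\beta_{M_{m_0}})$ is nonzero because $\sum_k\beta_kY_{m_0,k}=L\Psi$ and $\|L\Psi\|_{L^2(\mathbb{S}^+,\theta_{N+1}^{1-2s})}=L>0$ (equivalently, this is exactly what \eqref{limit-H-positve} says once combined with the Parseval identity). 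I expect the only genuinely delicate input to be the one already established in \eqref{limit-H-positve}: once that is available, the argument above is mainly bookkeeping, its subtlest point being the realization that the coefficients $\beta_k$ produced by the ODE analysis \eqref{eq-Fourier-coefficents}--\eqref{eq-Fourier:13} determine the blow-up limit uniquely, which is what removes the dependence on the subsequence.
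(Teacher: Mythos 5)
Your proposal is correct and follows essentially the same route as the paper: normalize via Proposition \ref{prop-limit-H} and \eqref{limit-H-positve}, use the subsequential blow-up of Proposition \ref{prop-blow-up-sub}, identify the coefficients of the limit profile through the Fourier coefficients $\varphi_{m_0,k}$ and the expansion \eqref{eq-Fourier-coefficents} (which makes them independent of the subsequence), and conclude by Urysohn's subsequence principle. The only cosmetic difference is that you take the limit $\la^{-m_0}\varphi_{m_0,k}(\la)\to\beta_k$ along all $\la\to0^+$ directly from \eqref{eq-Fourier-coefficents}, while the paper evaluates it along the chosen subsequence; the substance is identical.
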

\begin{proof}
  From Proposition \ref{prop-blow-up-sub}, \eqref{orthonormal-base},
  and \eqref{limit-H-positve} it follows that, for any sequence
  $\{\la_n\}$ such that $\la_n \to
0^+$ as $n\to\infty$, there exist a subsequence
  $\{\la_{n_h}\}$ and real numbers $\beta_1, \dots, \beta_{M_{m_0}}$
  such that 
  $(\beta_1, \dots, \beta_{M_{m_0}})\neq (0,\dots, 0)$ and
\begin{equation}\label{eq:limit-blow-up-straight:1}
  \la_{n_h}^{-m_0}W(\la_{n_h} z) \to |z|^{m_0}\sum_{k=1}^{M_{m_0}}
  \beta_k Y_{m_0,k}
  \left(\frac{z}{|z|}\right) \quad \text{as }h \to +\infty \quad \text{strongly in } H^1(B_1^+,t^{1-2s}).
\end{equation}
We claim that the numbers $\beta_1, \dots \beta_{M_{m_0}}$ depend
neither on the sequence $\{\la_n\}$ nor on its subsequence
$\{\la_{n_h}\}$.  Letting $\varphi_{m_0,k}$ be as
\eqref{Fourier-coefficents}, for every $k \in \{1, \dots,M_{m_0}\}$
\begin{equation}\label{eq:limit-blow-up-straight:2}
  \lim_{h \to +\infty} \la_{n_h}^{-m_0}\varphi_{m_0,k}(\la_{n_h})=
  \lim_{h \to +\infty} \int_{\mathbb{S}^+}\theta_{N+1}^{1-2s}
  \la_{n_h}^{-m_0}W(\la_{n_h}\theta)Y_{m_0,k}(\theta)\,dS=\beta_k,
\end{equation}
thanks to \eqref{eq:limit-blow-up-straight:1} and the compactness of
the trace operator in \eqref{trace-Sr+}.  Combining
\eqref{eq:limit-blow-up-straight:2} and \eqref{eq-Fourier-coefficents}
we obtain that, for every $r \in (0,r_0]$, $\beta_k=\lim_{h \to +\infty}
  \la_{n_h}^{-m_0}\varphi_{m_0,k}(\la_{n_h})$ is equal to the right
  hand side in \eqref{beta-k},
thus proving the claim.  By Urysohn's subsequence principle we
conclude that the convergence in \eqref{eq:limit-blow-up-straight:1} holds
as $\la \to 0^+$, hence the proof is complete.
\end{proof}

\section{Proofs of the main results} \label{sec-proofs-of-the-main-results}

The proof of Theorem \ref{theor-blow-up-extended} is obtained as
  a consequence of the following result.
\begin{theorem}\label{theor-blow-up-extended-all}
  Let $N>2s$ and $\Omega\subset \R^N$ be a bounded Lipschitz
  domain such that $0\in\partial \Omega$ and \eqref{hypo-g}--\eqref{hypo-g-Omega} are
  satisfied with $x_0=0$ for some function $g$ and $R>0$. Let $U$ be
  a non trivial solution to \eqref{prob-extension} in the sense of
  \eqref{eq-weak}, with $h$ satisfying \eqref{hypo-h}, and let
    \begin{equation}\label{eq:Uhat}
      \widehat U(z)=
      \begin{cases}
        U(z),&\text{if }z\in \mathcal C_\Omega\cap F(B_{r_0}^+),\\
        0,&\text{if }z\in F(B_{r_0}^+)\setminus \mathcal C_\Omega,
      \end{cases}
    \end{equation}
    with $F$ and $r_0$ being as in Proposition \ref{diffeomorphism}.
Then there
  exist $m_0\in \mathbb{N} \setminus \{0\}$ (which is odd
  in the case $N=1$) such that
\begin{multline}\label{limit-blow-up-extended-all}
	\la^{-m_0}\widehat{U}(\la z) \to |z|^{m_0}\sum_{k=1}^{M_{m_0}}
        \beta_k \widehat Y_{m_0,k}\left(\frac{z}{|z|}\right) \quad
        \text{as }\la \to 0^+
        \quad \text{strongly in } H^1(B_1^+,t^{1-2s}),
\end{multline}
where  $M_{m_0}$ is  as
in \eqref{dimension-egigenspaces},
  \begin{equation}\label{eq:hatY}
  \widehat Y_{m_0,k} (\theta',\theta_N,\theta_{N+1})=
  \begin{cases}
      Y_{m_0,k}(\theta',\theta_N,\theta_{N+1}),&\text{if }\theta_N<0,\\
     0,&\text{if }\theta_N\geq0,
  \end{cases}
\end{equation}
with $\{Y_{m_0,k}\}_{k\in\{1,\dots,M_{m_0}\}}$ being as in
\eqref{orthonormal-base}, and the coefficients $\beta_k$ satisfy
\eqref{beta-k}.
\end{theorem}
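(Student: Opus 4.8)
The plan is to deduce the statement from Theorem~\ref{theor-blow-up-straight} by transporting it through the diffeomorphism $F$ and undoing the odd reflection. The starting point is the identity relating $\widehat U$ to the solution $W$ of the reflected problem \eqref{prob-extension-straith}: by \eqref{F-propeties-4} and \eqref{eq:4}, $F$ maps $\{y_N<0\}\cap B_{r_0}^+$ onto $\mathcal C_\Omega\cap F(B_{r_0}^+)$ and $\{y_N>0\}\cap B_{r_0}^+$ onto $F(B_{r_0}^+)\setminus\overline{\mathcal C_\Omega}$, so that \eqref{eq:1defW}, \eqref{W} and \eqref{eq:Uhat} yield
\begin{equation*}
  \widehat U\circ F=P W,\qquad\text{where }\ (P v)(y',y_N,t):=v(y',y_N,t)\,\mathbf 1_{\{y_N<0\}},
\end{equation*}
and hence $\widehat U=(P W)\circ F^{-1}$ on $F(B_{r_0}^+)$. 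Since $W$ is odd in $y_N$, its trace on $\{y_N=0\}$ vanishes, so $P W\in H^1(B_{r_0}^+,t^{1-2s})$ with $\nabla(P W)=(\nabla W)\,\mathbf 1_{\{y_N<0\}}$; more generally, for every $R>0$ the operator $P$ is a linear contraction from the closed subspace of $H^1(B_R^+,t^{1-2s})$ consisting of functions odd in $y_N$ into $H^1(B_R^+,t^{1-2s})$, and it commutes with dilations.

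Next I would promote Theorem~\ref{theor-blow-up-straight} to all scales and apply $P$. Replacing $\lambda$ by $R\lambda$ and rescaling the variable by $R$ upgrades the convergence $\lambda^{-m_0}W(\lambda\,\cdot)\to\Phi$, with $\Phi(z):=|z|^{m_0}\sum_{k=1}^{M_{m_0}}\beta_k Y_{m_0,k}(z/|z|)$, from $H^1(B_1^+,t^{1-2s})$ to $H^1(B_R^+,t^{1-2s})$, for every $R>0$. As $\lambda^{-m_0}W(\lambda\,\cdot)$ and $\Phi$ are odd in $y_N$, applying $P$ and using that truncation commutes with dilations gives, for every $R>0$,
\begin{equation*}
  \widetilde U_\lambda:=\lambda^{-m_0}(P W)(\lambda\,\cdot)=P\big(\lambda^{-m_0}W(\lambda\,\cdot)\big)\longrightarrow P\Phi=|z|^{m_0}\sum_{k=1}^{M_{m_0}}\beta_k\,\widehat Y_{m_0,k}(z/|z|)=:v_0
\end{equation*}
strongly in $H^1(B_R^+,t^{1-2s})$, where the last equality uses \eqref{eq:hatY} and the fact that $\mathbf 1_{\{y_N<0\}}(z)=\mathbf 1_{\{\theta_N<0\}}(z/|z|)$.

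It remains to transport this convergence back through $F^{-1}$. Setting $G_\lambda(z):=\lambda^{-1}F^{-1}(\lambda z)$, the identity $\widehat U=(P W)\circ F^{-1}$ gives $\lambda^{-m_0}\widehat U(\lambda z)=\widetilde U_\lambda(G_\lambda(z))$ for $z\in B_1^+$ and $\lambda$ small. By Proposition~\ref{diffeomorphism}, $F^{-1}$ is of class $C^{1,1}$ near $0$, preserves the last coordinate (since $F_{N+1}(y,t)=t$) and satisfies $J_{F^{-1}}(0)=\mathrm{Id}_{N+1}$; hence $G_\lambda$ fixes $t$, $G_\lambda\to\mathrm{Id}$ in $C^1(\overline{B_1^+})$, $J_{G_\lambda}\to\mathrm{Id}_{N+1}$ and $\det J_{G_\lambda}\to 1$ uniformly on $B_1^+$, and $G_\lambda(B_1^+)\subset B_2^+$ for $\lambda$ small. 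Passing to the limit in the composition then reduces, via $\nabla(\widetilde U_\lambda\circ G_\lambda)=J_{G_\lambda}^T\big((\nabla\widetilde U_\lambda)\circ G_\lambda\big)$, to the two convergences $(\nabla\widetilde U_\lambda)\circ G_\lambda\to\nabla v_0$ and $\widetilde U_\lambda\circ G_\lambda\to v_0$ in $L^2(B_1^+,t^{1-2s})$ (the prefactor $J_{G_\lambda}^T$ may be replaced by $\mathrm{Id}_{N+1}$ at the cost of an error vanishing in $L^2(B_1^+,t^{1-2s})$, since $\nabla\widetilde U_\lambda$ is bounded in $L^2(B_2^+,t^{1-2s})$ by the previous step). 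Both convergences follow from $\widetilde U_\lambda\to v_0$ in $H^1(B_2^+,t^{1-2s})$, the uniform convergence $G_\lambda\to\mathrm{Id}$ with $t$ preserved and bounded Jacobians (change of variables $w=G_\lambda(z)$), and a density argument approximating $v_0$ by functions in $C^\infty(\overline{B_2^+})$, using $\int_{B_1^+}t^{1-2s}\,dz<+\infty$. This yields \eqref{limit-blow-up-extended-all}, with $m_0$ and the coefficients $\beta_k$ inherited from Theorem~\ref{theor-blow-up-straight}. The only genuinely delicate step is this last passage to the limit in the composition with $G_\lambda$; the singularity of the weight $t^{1-2s}$ when $s>\tfrac12$ is harmless precisely because $G_\lambda$ preserves $t$ and $t^{1-2s}$ is locally integrable.
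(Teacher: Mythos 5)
Your proposal is correct and follows essentially the same route as the paper: identify $\widehat U\circ F$ with the truncation $\widehat W$ of the reflected solution, apply Theorem \ref{theor-blow-up-straight}, upgrade the convergence to larger half-balls by homogeneity, and transport it back through $G_\lambda(z)=\lambda^{-1}F^{-1}(\lambda z)\to\mathrm{Id}$ via a composition-convergence argument in $L^2(B_1^+,t^{1-2s})$. The only difference is that you spell out the composition lemma (change of variables, density, $t$-preservation) which the paper states without proof.
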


\begin{proof}
  If $U$ is a non trivial solution of \eqref{prob-extension}, then the
  function $W$ defined in \eqref{eq:1defW} and \eqref{W} belongs to 
  $H^1(B^+_{r_0},t^{1-2s})$ and is a non trivial weak solution to
  \eqref{prob-extension-straith}.
  Letting
 \begin{equation*}
  \widehat W (z)=
  \begin{cases}
      W (z),&\text{if }z\in \mathcal Q_{r_0},\\
     0,&\text{if }z\in B_{r_0}^+\setminus\mathcal Q_{r_0},
  \end{cases}
\end{equation*}
where $\mathcal Q_{r_0}$ is defined in \eqref{eq:calQ}, by Remark \ref{null-hyperplane}
we have that $\widehat W\in H^1(B^+_{r_0},t^{1-2s})$. Moreover Theorem
\ref{theor-blow-up-straight} implies that
\begin{equation*}
  \la^{-m_0}\widehat{W}(\la z) \to
  \widehat \Phi(z) \quad \text{strongly in } H^1(B_1^+,t^{1-2s})  \quad
        \text{as }\la \to 0^+,
\end{equation*}
where
\begin{equation*}
 \widehat \Phi(z)=
  |z|^{m_0}\sum_{k=1}^{M_{m_0}}
        \beta_k \widehat Y_{m_0,k}\left(\frac{z}{|z|}\right)
      \end{equation*}
      with  $\beta_k$ as in \eqref{beta-k}.
      Hence, by homogeneity,
      \begin{equation}\label{eq:congpra}
  \la^{-m_0}\widehat{W}(\la z) \to
  \widehat \Phi(z) \quad \text{strongly in } H^1(B_r^+,t^{1-2s})  \quad
        \text{as }\la \to 0^+\quad\text{for all }r>1.
\end{equation}
      We note that
      \begin{equation}\label{eq:res}
        \la^{-m_0}\widehat{U}(\la z)=\la^{-m_0}\widehat{W}(\la G_\lambda(z))
        \quad \text{ and } \quad \nabla \left(\frac{\widehat U(\la
            \cdot)}{\la^{m_0}}\right)
        =\nabla \left(\frac{\widehat W(\la
            \cdot)}{\la^{m_0}}\right)(G_\la(z))J_{G_\la}(z)
      \end{equation}
where 
\begin{equation*}
G_\la(z):=\frac{1}{\la}F^{-1}(\la z) \quad \text{for any }  \la
\in(0,1] \text{ and }z \in \frac1\lambda F(B_{r_0^+}).
\end{equation*}  
From Proposition \ref{diffeomorphism} we deduce that 
\begin{equation*}
  G_\la(z)=z +O(\la) \quad \text{and} \quad J_{G_\la}(z)
  =\mathop{\rm Id}\nolimits_{N+1}+O(\la)\quad \text{as } \la \to 0^+ 
\end{equation*}
uniformly respect to $z \in B_1^+$.  It follows that, if $f_\la \to f$
in $L^2(B_{r}^+,t^{1-2s})$ as $\la \to 0^+$ for some
  $r>1$, then
$f_\la\circ G_\la \to f$ in $L^2(B_1^+,t^{1-2s})$ as $\la\to
0^+$. Then we  conclude in view of \eqref{eq:congpra} and \eqref{eq:res}.
\end{proof}
\begin{proof}[Proof of Theorem \ref{theor-blow-up-extended} ]
  It follows directly from Theorem \ref{theor-blow-up-extended-all} up
  to a translation.
\end{proof}
Passing to traces in \eqref{limit-blow-up-extended-all} we obtain
the following blow-up result for solutions to \eqref{eq-spec-lapla}.

\begin{theorem}\label{theor-blow-up-down-all}
  Let $N>2s$ and $\Omega\subset \R^N$ be a bounded Lipschitz domain
  such that $0\in\partial \Omega$ and
  \eqref{hypo-g}--\eqref{hypo-g-Omega} are satisfied with $x_0=0$ for
  some function $g$ and $R>0$.  Let $u\in \mathbb{H}^s(\Omega)$ be a
  non trivial solution of \eqref{eq-spec-lapla} in the sense of
  \eqref{eq-spec-lapla-weak}, with $h$ satisfying \eqref{hypo-h},
  and let $\widehat u(x)=\iota(u)$ with $\iota$ defined in
    \eqref{eq:triv-ext}. Then there exists
  $m_0 \in \mathbb{N} \setminus \{0\}$ (which is odd in the case
  $N=1$) such that
  \begin{equation*} \la^{-m_0}\widehat{u}(\la
      x) \to |x|^{m_0}\sum_{k=1}^{M_{m_0}} \beta_k \widehat
      Y_{m_0,k}\left(\frac{x}{|x|},0\right) \quad \text{as }\la \to
      0^+ \quad \text{strongly in } H^s(B'_1),
\end{equation*}
where  $M_{m_0}$ is  as
in \eqref{dimension-egigenspaces},
 $\{\widehat Y_{m_0,k}\}_{k\in\{1,\dots,M_{m_0}\}}$ are defined in
\eqref{eq:hatY} and the coefficients $\beta_k$ satisfy \eqref{beta-k}.
\end{theorem}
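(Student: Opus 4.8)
The plan is to obtain Theorem \ref{theor-blow-up-down-all} simply by passing to traces on $B_1'$ in the strong $H^1(B_1^+,t^{1-2s})$ convergence furnished by Theorem \ref{theor-blow-up-extended-all}. Set $\Phi(z):=|z|^{m_0}\sum_{k=1}^{M_{m_0}}\beta_k\widehat Y_{m_0,k}(z/|z|)$, with $m_0$ and $\beta_k$ as in \eqref{beta-k}; by Theorem \ref{theor-blow-up-extended-all} one has $\la^{-m_0}\widehat U(\la\,\cdot)\to\Phi$ strongly in $H^1(B_1^+,t^{1-2s})$ as $\la\to0^+$, and in particular $\Phi\in H^1(B_1^+,t^{1-2s})$. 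Applying the linear continuous trace operator $\Tr\colon H^1(B_1^+,t^{1-2s})\to H^s(B_1')$ of Proposition \ref{prop-traces} then yields at once $\Tr\big(\la^{-m_0}\widehat U(\la\,\cdot)\big)\to\Tr(\Phi)$ strongly in $H^s(B_1')$.

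It remains to identify both sides of this limit. For the right-hand side, parametrizing $B_1'$ by $z=(x,0)$ with $|z|=|x|$ and $z/|z|=(x/|x|,0)\in\mathbb S'$, and using that $\Phi$ is continuous up to $\{t=0\}$ away from the origin (since each $\widehat Y_{m_0,k}$ is continuous on $\overline{\mathbb S^+}$ by the regularity of the eigenfunctions of \eqref{prob-eigenvalues} recalled in the introduction), one finds that the weighted trace of $\Phi$ on $\{t=0\}$ is the pointwise restriction, i.e. $\Tr(\Phi)(x)=|x|^{m_0}\sum_{k=1}^{M_{m_0}}\beta_k\widehat Y_{m_0,k}(x/|x|,0)$, which is exactly the limit profile in the statement. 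For the left-hand side, fix $\la$ small enough that $B_\la^+\subset F(B_{r_0}^+)$; then $\widehat U|_{B_\la^+}\in H^1(B_\la^+,t^{1-2s})$, and by the dilation-compatibility of the trace, $\Tr\big(\la^{-m_0}\widehat U(\la\,\cdot)\big)(x)=\la^{-m_0}\Tr(\widehat U)(\la x)$ for $x\in B_1'$. Since $\widehat U$ coincides with $U$ on $\mathcal C_\Omega\cap F(B_{r_0}^+)$ and with $0$ elsewhere, locality of the trace operator together with $\mathop{\rm{Tr}_\Omega}(U)=u$ gives $\Tr(\widehat U)=u$ a.e.\ on $\Omega\cap B_\la'$ and $\Tr(\widehat U)=0$ a.e.\ on $B_\la'\setminus\overline\Omega$, that is $\Tr(\widehat U)=\iota(u)=\widehat u$ on $B_\la'$. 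Hence $\Tr\big(\la^{-m_0}\widehat U(\la\,\cdot)\big)=\la^{-m_0}\widehat u(\la\,\cdot)$ on $B_1'$, and the claimed convergence follows, with the same $m_0$ and the same coefficients $\beta_k$ satisfying \eqref{beta-k}.

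This argument is essentially routine once Theorem \ref{theor-blow-up-extended-all} is available: the only point requiring a word of care is the identification $\Tr(\widehat U)=\widehat u$ near $0$, which reconciles the two distinct zero-extension conventions — the extension $\widehat U$ of $U$ being defined and extended by zero only within $F(B_{r_0}^+)$, whereas $\widehat u=\iota(u)$ is extended by zero to all of $\R^N$ — and relies on the vanishing of $U$ on the lateral boundary $\partial\Omega\times[0,+\infty)$, so that the weighted-Sobolev trace of the trivial extension of $U$ equals the trivial extension of $\mathop{\rm{Tr}_\Omega}(U)$. This is seen by approximating $U$ in $H^1_{0,L}(\mathcal C_\Omega,t^{1-2s})$ by functions in $C^\infty_c(\overline{\mathcal C_\Omega})$ vanishing near $\partial_L\mathcal C_\Omega$, for which the assertion is clear, and passing to the limit by continuity of the trace and of $\iota$.
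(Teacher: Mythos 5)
Your proposal is correct and takes essentially the same route as the paper: apply Theorem \ref{theor-blow-up-extended-all} to the trivial extension $\widehat U$ of $\mathcal H(u)$ and pass to traces using the linear continuous operator of Proposition \ref{prop-traces}. You in fact spell out the identifications $\Tr(\widehat U)=\widehat u$ near $0$ and of $\Tr$ applied to the limit profile, which the paper states without detail.
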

\begin{proof}
As observed in \cite{MR2825595} and recalled at page
    \pageref{ext}, if $u\in \mathbb{H}^s(\Omega)$ is a non trivial
    solution of \eqref{eq-spec-lapla}, then its extension
    $\mathcal H(u)=U$ is non trivial solution to \eqref{prob-extension}. Hence the
    corresponding function $\widehat U$ defined in \eqref{eq:Uhat}
    satisfies \eqref{limit-blow-up-extended-all} by Theorem
    \ref{theor-blow-up-extended-all}.  Since
    $\widehat u=\mathop{\rm Tr}(\widehat U)$, the conclusion follows
    from Proposition \ref{prop-traces}.
\end{proof}
\begin{proof}[Proof of Theorem \ref{theor-blow-up-down}]
  It follows directly from Theorem \ref{theor-blow-up-down-all} up to a translation.
\end{proof}

\appendix 
\section{Neumann eigenvalues on the half-sphere under a symmetry
    condition} \label{appendix-eigenvalues-half-sphere}

In order to determine the eigenvalues of \eqref{prob-eigenvalues}, we
first need the following preliminary lemma.

\begin{lemma}\label{lemma-polynomalian-solution}
  Let $m, N \in \mathbb{N} \setminus \{0\}$ and let $u \in C^m(\R^N)\setminus\{0\}$
  be a positively homogeneous function of degree $m$, i.e.
\begin{equation}\label{homogenity}
	u(\la x)=\la^m u(x) \quad \text{for every } \la > 0 \text{ and } x \in \R^N.
\end{equation}
Then $u$ is a homogeneous polynomial of degree $m$. 
\end{lemma}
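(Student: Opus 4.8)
The plan is to combine the Peano form of Taylor's theorem at the origin with the fact that positive homogeneity is inherited by all partial derivatives of $u$ up to order $m$.

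First I would record the homogeneity of derivatives. Fixing $\la>0$ and differentiating the identity $u(\la x)=\la^m u(x)$ in the $x$ variables, then iterating (which is legitimate precisely because $u\in C^m(\R^N)$), one obtains
\begin{equation*}
(\partial^\alpha u)(\la x)=\la^{m-|\alpha|}(\partial^\alpha u)(x)
\qquad\text{for all }\la>0,\ x\in\R^N,\ |\alpha|\le m.
\end{equation*}
Evaluating this at a fixed $x$ and letting $\la\to0^+$, for every multi-index $\alpha$ with $|\alpha|<m$ the right-hand side tends to $0$, while the left-hand side tends to $(\partial^\alpha u)(0)$ by continuity of $\partial^\alpha u$; hence $\partial^\alpha u(0)=0$ whenever $|\alpha|<m$.

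Next I would write the $m$-th order Taylor expansion of $u$ at $0$, which is available since $u\in C^m(\R^N)$:
\begin{equation*}
u(x)=\sum_{|\alpha|\le m}\frac{\partial^\alpha u(0)}{\alpha!}\,x^\alpha+R(x)=P(x)+R(x),
\end{equation*}
where, by the previous step, $P(x):=\sum_{|\alpha|=m}\frac{\partial^\alpha u(0)}{\alpha!}\,x^\alpha$ is a homogeneous polynomial of degree $m$, and the remainder satisfies $R(x)=o(|x|^m)$ as $x\to0$. Since both $u$ and $P$ are positively homogeneous of degree $m$, so is $R=u-P$. Therefore, for every fixed $x\neq0$,
\begin{equation*}
R(x)=\la^{-m}R(\la x)=\frac{o(|\la x|^m)}{\la^m}=|x|^m\,\frac{o(\la^m)}{\la^m}\longrightarrow 0\qquad\text{as }\la\to0^+,
\end{equation*}
which forces $R(x)=0$. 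Since $R(0)=0$ as well, we conclude $R\equiv0$, i.e. $u=P$ is a homogeneous polynomial of degree $m$; it is nonzero because $u\not\equiv0$.

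I do not expect a genuine obstacle in this argument. The only points that need a word of care are the justification that $u\in C^m$ permits both the iterated differentiation of the homogeneity relation up to order $m$ and the use of the Peano remainder in Taylor's formula, together with the elementary observation that $\la^{-m}o(|\la x|^m)\to0$ as $\la\to0^+$ for each fixed $x$.
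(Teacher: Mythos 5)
Your proof is correct and follows essentially the same route as the paper: Taylor expansion at the origin combined with the observation that $\partial^\alpha u$ is positively homogeneous of degree $m-|\alpha|$, which kills the coefficients of order $|\alpha|<m$. The only (minor) difference is in the top-order term: the paper uses the Lagrange form of the remainder and the fact that a continuous function homogeneous of degree $0$ is constant, while you use the Peano remainder and scale it to zero by homogeneity of $R=u-P$; both steps are valid and of comparable effort.
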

\begin{proof}
  Let $\alpha=(\alpha_1,\dots,\alpha_N) \in \mathbb{N}^N$ be a
  multindex, $|\alpha|:=\sum_{i=1}^N \alpha_i$, and
  $x^\alpha=x_1^{\alpha_1} \dots x_N^{\alpha_N}$ for any
  $x=(x_1,\dots,x_N) \in \R^N$. By Taylor's Theorem with Lagrange
  remainder centered at $0$, for any $x \in \R^{N}$ there exists
  $t \in [0,1]$ such that
\begin{equation*}
u(x)= \sum_{|\alpha| <m} c_\alpha\pd{^{|\alpha|} u}{x^\alpha}(0)
x^\alpha
+\sum_{|\alpha| =m}c_\alpha\pd{^{|\alpha|} u}{x^\alpha}(tx) x^\alpha,
\end{equation*}
where $c_\alpha>0$ are positive constants depending on $\alpha$ and
$\pd{^{|\alpha|} u}{x^\alpha}$ stands for
$\frac{\partial^{|\alpha|} u}{\partial x_1^{\alpha_1}\cdots \partial
  x_N^{\alpha_N}}$.  By \eqref{homogenity}, one can easily prove that
$\pd{^{|\alpha|} u}{x^\alpha}$ is a positively homogeneous function of
degree $m-|\alpha|$ for all $\alpha$ with $|\alpha|\leq m$.  Thus,
combining this fact with the continuity of
$\pd{^{|\alpha|} u}{x^\alpha}$, it is clear that
$\pd{^{|\alpha|} u}{x^\alpha}(0)=0$ for every $\alpha\in \mathbb{N}^N$
with $|\alpha|<m$. On the other hand, for every
$\alpha\in \mathbb{N}^N$ with $|\alpha|=m$, we have that
$\pd{^{|\alpha|} u}{x^\alpha}$ is constant and exactly equal to
$\frac{\partial^{|\alpha|} u}{\partial x^\alpha}(0)$, being a homogeneous
function of degree $0$. It follows that
\begin{equation*}
  u(x)= \sum_{|\alpha| =m}c_\alpha\pd{^{|\alpha|} u}{x^\alpha}(0)
  x^\alpha \quad
  \text{for every } x \in \R^N,
\end{equation*}
hence proving the claim.
\end{proof}

\begin{proposition}
  All the eigenvalues of problem \eqref{prob-eigenvalues} are
  characterized by formula \eqref{eigenvalues}.
\end{proposition}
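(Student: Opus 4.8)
The plan is to compute the eigenvalues of \eqref{prob-eigenvalues} by reflecting across the equator $\{\theta_{N+1}=0\}$ and relating the problem to a genuinely Laplacian-type eigenvalue problem on the full sphere $\mathbb{S}^N$, for which eigenvalues and eigenfunctions are classical. First I would note that the weight $\theta_{N+1}^{1-2s}$ degenerates/blows up only at the equator, so after an even reflection $\Psi(\theta',\theta_N,\theta_{N+1})\mapsto\Psi(\theta',\theta_N,|\theta_{N+1}|)$ the Neumann condition $\lim_{\theta_{N+1}\to0^+}\theta_{N+1}^{1-2s}\nabla_{\mathbb S}Y\cdot\nu=0$ on $\mathbb{S}'$ becomes exactly the natural transmission/weak condition for the reflected equation on all of $\mathbb{S}$ with weight $|\theta_{N+1}|^{1-2s}$. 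Equivalently, passing to the associated homogeneous extension $V(r\theta):=r^{\gamma}Y(\theta)$ with $\gamma(\gamma+N-2s)=\mu$, the function $V$ (after reflection) solves $\operatorname{div}(|t|^{1-2s}\nabla V)=0$ on $\R^{N+1}$, i.e. $V$ is a solution of the extension equation that is \emph{$A_2$-harmonic} and homogeneous of degree $\gamma$; by \cite[Theorem 1.1]{STV} such solutions are smooth. This is the key regularity input already advertised after Theorem \ref{theor-blow-up-down}.

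Next, using Lemma \ref{lemma-polynomalian-solution}: the smoothness of $V$ together with its positive homogeneity of degree $\gamma$ forces $\gamma\in\mathbb{N}\setminus\{0\}$ and $V$ to be a homogeneous polynomial $P$ of degree $\gamma=m$ in $(x,t)\in\R^{N+1}$. (Here I would first argue $\gamma$ must be a nonnegative integer — a nonzero smooth function cannot be positively homogeneous of non-integer degree since its Taylor expansion at $0$ would have to consist of the single homogeneous component of that degree, impossible unless the degree is a natural number; and $\gamma=0$ gives a constant, which cannot satisfy the odd symmetry unless it is zero.) The equation $\operatorname{div}(t^{1-2s}\nabla V)=0$ on $\R^{N+1}_+$, written for a polynomial, reads $t\Delta_x P + t\,\partial_{tt}P + (1-2s)\partial_t P=0$. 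Imposing that $P$ be a polynomial in $t$ and matching powers of $t$ gives a recursion that forces $P$ to be a polynomial in $t^2$ times the leading $t^0$ term, and the compatible degrees are exactly $m$; the admissible $P$ are in bijection with solid harmonic polynomials on $\R^N$ restricted to the hyperplane $t=0$, so that $Y$ restricted to $\mathbb{S}^{N-1}\subset\mathbb{S}'$ is a spherical harmonic. From $\gamma(\gamma+N-2s)=\mu$ with $\gamma=m\in\mathbb N$ one reads off $\mu_m=m^2+m(N-2s)$ for $N>1$.

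Finally I would handle the odd-symmetry constraint and the case $N=1$. The space $H^1_{\rm odd}$ requires $\Psi(\theta',\theta_N,\theta_{N+1})=-\Psi(\theta',-\theta_N,\theta_{N+1})$, i.e. the polynomial $P(x',x_N,t)$ must be odd in $x_N$; for $N>1$ there exist harmonic polynomials of every degree $m\geq1$ that are odd in $x_N$ (e.g. $x_N\cdot(\text{suitable harmonic of degree }m-1)$ plus corrections), so every $\mu_m=m^2+m(N-2s)$ is attained. For $N=1$ the variable $x'$ is absent, $x_N$ is a single variable, and a homogeneous polynomial in $(x_N,t)$ that is $A_2$-harmonic and odd in $x_N$ exists only for odd degrees $m=2j-1$, which gives $\mu_m=(2m-1)^2+(2m-1)(N-2s)$ upon reindexing — exactly \eqref{eigenvalues}. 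Conversely, every eigenvalue arises this way because any eigenfunction, via the homogeneous extension and reflection, produces such a polynomial. The main obstacle is the rigidity step: carefully justifying that an $L^2(\mathbb S^+,\theta_{N+1}^{1-2s})$ eigenfunction, a priori only $H^1$ with a weight, actually extends (after reflection) to a solution of the degenerate elliptic equation on all of $\R^{N+1}$ to which \cite[Theorem 1.1]{STV} applies, and then invoking Lemma \ref{lemma-polynomalian-solution} to upgrade homogeneity to polynomiality; once that is in place the computation of the spectrum is routine separation of variables in $t$.
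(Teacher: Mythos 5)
Your plan is correct and follows essentially the same route as the paper's Appendix \ref{appendix-eigenvalues-half-sphere}: homogeneous extension of an eigenfunction, even reflection in $t$ and smoothness from \cite[Theorem 1.1]{STV}, integrality of the homogeneity degree together with polynomiality via Lemma \ref{lemma-polynomalian-solution}, the parity-in-$t$ argument to exclude even degrees when $N=1$, and explicit polynomial solutions odd in $y_N$ to realize every admissible degree. The only inaccuracy is the parenthetical claim that the admissible homogeneous polynomial solutions are in bijection with harmonic polynomials in $x$ restricted to $\{t=0\}$ — the $t^2$-recursion shows that every homogeneous trace $p_0(x)$, harmonic or not, generates such a solution — but none of the steps you actually use depend on that remark.
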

\begin{proof}
  We start by proving that if $\mu$ is an eigenvalue of
  \eqref{prob-eigenvalues}, then $\mu= m^2+m(N-2s)$ for some
  $m\in \mathbb{N}\setminus \{0\}$. If $\mu$ is an eigenvalue, then
  there exists a non trivial solution $Y$ of \eqref{prob-eigenvalues}.
  A direct computation shows that  $Y$ is a weak solution to
  \eqref{prob-eigenvalues} if and only if the function
\begin{equation*}
U(z):=|z|^\gamma Y\left(\frac{z}{|z|}\right), \quad z \in \R_+^{N+1}, 
\end{equation*}
with 
\begin{equation}\label{gamma}
\gamma:=-\frac{N-2s}{2}+\sqrt{\left(\frac{N-2s}2{}\right)^2 +\mu},
\end{equation}
belongs to $H^1_{\mathrm{loc}}(\R_+^{N+1},t^{1-2s})$, is odd with
respect to $y_N$ and weakly solves
\begin{equation}\label{prob-div-0}
\begin{cases}
\dive(t^{1-2s}\nabla U)=0, &\text{in }\R_+^{N+1},\\
\lim_{t \to 0^+}t^{1-2s} \pd{U}{\nu}=0, &\text{on } \R^N.
\end{cases}	
\end{equation}
Hence, if $\mu$ is an eigenvalue of \eqref{prob-eigenvalues}, there
exists a solution $U$ of \eqref{prob-div-0} which is odd with respect
to $y_N$ and positively homogeneous of degree $\gamma$. The regularity
result in \cite[Theorem 1.1]{STV} ensures that
$U\in C^{\infty}(\overline{B_1^+})$. Then there exists
$m \in \mathbb{N}\setminus \{0\}$ such that $\gamma =m$ and so
$\mu=m^2+m(N-2s)$ thanks to \eqref{gamma}. We notice that the case $m=0$
is excluded since in that case $\mu=0$ and 0 is not an eigenvalue.
Indeed, if by contradiction 0
is an eigenvalue, letting $Y$ be an eigenfunction of
\eqref{prob-eigenvalues} with associated eigenvalue $0$ and choosing in
\eqref{eq-egienvlulues} $\Psi=Y$, we would have that $Y$ is constant and
$Y \not\equiv 0$, hence
$Y \notin H_{\rm odd}^1(\mathbb{S}^+,\theta_{N+1}^{1-2s})$ which is a
contradiction (see \eqref{H1d}).

Viceversa, in order to prove that the numbers given in
\eqref{eigenvalues} are eigenvalues of \eqref{prob-eigenvalues}, we
need to show that, for any fixed
  $m \in \mathbb{N} \setminus \{0\}$, there actually exist an
eigenfunction associated to $m^2+m(N-2s)$ if $N>1$ and an
eigenfunction associated to $(2m-1)^2+(2m-1)(N-2s)$ if
$N=1$. Equivalently, for any fixed $m \in \mathbb{N} \setminus \{0\}$
we have to find a non trivial solution to \eqref{prob-div-0} which is
odd with respect to $y_N$ and positively homogeneous with degree $m$
if $N>1$ and $2m-1$ if $N=1$.
To this end, we observe that equation $\dive(t^{1-2s}\nabla U)=0$ can
be rewritten as
\begin{equation}\label{eq:eginvluaes-computation:1}
\Delta U +\frac{1-2s}{t}U_t=0. 
\end{equation}
We
first consider the case $N=1$.  If $n =2m -1$ with
$m\in \mathbb{N}\setminus\{0\}$, we consider the following
homogeneous polynomial of degree $2m-1$, odd with respect to
$y_1$,
\begin{equation}\label{solution-m-odd-N=1}
  U_{1,m}(y_1,t):=\sum_{k=0}^{m-1} a_k y_1^{2k +1 } t^{2m-2k-2},
\end{equation} 
with $a_0, \dots, a_{m-1} \in \mathbb{R}$. A direct computation shows that
$U_{1,m}$ is a solution of \eqref{prob-div-0}, and equivalently of 
\eqref{eq:eginvluaes-computation:1}, if and only if
\begin{equation*}
  a_{k}=\frac{-2[(m-k)^2-s(m-k)]}{k(2k+1)} a_{k-1} \quad
    \text{for all } k \in \{1,\dots,m-1\}.
\end{equation*}
Thus, for example choosing $a_0:=1$, we have constructed a non trivial
solution to \eqref{prob-div-0} which is odd with respect to $y_1$ and
 positively homogeneous of degree $2m-1$.

To complete the proof of \eqref{eigenvalues} in the case $N=1$, it
remains to show that, if $n=2m$ with  $m\in \mathbb{N}\setminus
\{0\}$, then
$n^2+n(N-2s)$ is not an eigenvalue of \eqref{prob-eigenvalues}. To
this aim, we argue by contradiction and assume that $(2m)^2+2m(N-2s)$ is
an eigenvalue of \eqref{prob-eigenvalues} associated to an
eigenfunction $\Psi$. Then the function defined as 
\begin{equation*}
U(z)=|z|^\gamma\Psi\left(\frac{z}{|z|}\right), \quad z=(y_1,t)\in \mathbb{R}^2_+,
\end{equation*}
with 
\begin{equation*}
\gamma=-\frac{N-2s}{2}+\sqrt{\left(\frac{N-2s}{2}\right)^2+(2m)^2+2m(N-2s)}=2m
\end{equation*}
is a non trivial solution to \eqref{prob-div-0}, odd with
respect to $y_1$. Hence, if we consider the even reflection of $U$
with respect to $t$, namely the function
$\widetilde{U}(y_1,t):=U(y_1,|t|)$, we have that $\widetilde{U}$ is a
solution of $\mathrm{div}(|t|^{1-2s}\nabla \widetilde{U})=0$ in
$\mathbb{R}^{2}$. Then, by \cite[Theorem 1.1]{STV} we deduce that
$\widetilde{U}\in C^\infty(\mathbb{R}^{2})$. Moreover, $\widetilde{U}$
is positively homogeneous of degree $\gamma=2m$, therefore from Lemma
\ref{lemma-polynomalian-solution} it follows that $\widetilde{U} $ is
a homogeneous polynomial of degree $2m$, namely
\begin{equation*}
\widetilde{U}(y_1,t)= \sum_{k=0}^{2m}a_k y_1^{2m-k} t^k
\end{equation*}
where $a_k=0$ if $k$ is odd since $\widetilde{U}$ is even with
respect to $t$.  In this way $\tilde{U}$ turns out to be even also
with respect to $y_1$ and this contradicts the fact that $U$ is
non trivial and odd with respect to $y_1$.

If $N=2$ and $m \in \mathbb{N}\setminus\{0\}$ is odd, then we
consider $U_2(y_1,y_2,t):=U_{1,n}(y_2,t)$, where $U_{1,n}$ is defined  in
\eqref{solution-m-odd-N=1} and 
$n\in \mathbb{N}\setminus\{0\}$ is such that $m=2n-1$. Such $U_2$ is a positively
homogeneous solution of \eqref{prob-div-0} of degree $m$, odd with
respect to $y_2$. If $m \in \mathbb{N}\setminus\{0\}$ is even,
  i.e. $m =2n $ with $ n \in \mathbb{N} \setminus \{0\}$,
then we define
\begin{equation*}
U_3(y_1,y_2,t):=\sum_{k=0}^{n-1} a_k y_1^{2k +1 } y_2^{2n-2k-1},
\end{equation*}
with $a_0, \dots, a_{n-1} \in \mathbb{R}$.
A direct computation shows that
$U_3$ is a solution of \eqref{prob-div-0}, and equivalently of 
\eqref{eq:eginvluaes-computation:1}, if and only if
\begin{equation*}
  a_{k+1}=\frac{-[2(n-k)^2-3n+3k+1]}{(2k^2+5k+3)} a_k \quad \text{for
    all } k \in \{0,\dots,n-2\}.
\end{equation*}
Then, choosing for example again $a_0=1$, we obtain that $U_3$ is a solution of
\eqref{prob-div-0} which is positively homogeneous of degree $m$ and
odd with respect to $y_2$, as desired.

If $N>2$, for any $m \in \mathbb{N}\setminus \{0\}$ there exists a
harmonic homogeneous polynomial $P\not\equiv0$ in the variables
$y_1, \dots, y_{N-1},$ of degree $m-1$. Then
$U_4(y_1,\dots,y_{N-1},y_N,t):= P(y_1,\dots,y_{N-1})\,y_N$ is a non
trivial solution to \eqref{prob-div-0} which is odd with respect to $y_N$
and positively homogeneous of degree~$m$.

\end{proof}

\end{document}